\documentclass[reqno]{amsart}

\usepackage{amsmath}
\usepackage{amsfonts}
\usepackage{pdfsync}
\usepackage{color}
\usepackage{graphicx}
\usepackage{amssymb}
\usepackage{epsf}
\usepackage{eucal}
\usepackage{amscd}
\usepackage{epsfig}
\usepackage{graphics}
\usepackage{amsthm}
\usepackage{fancyhdr}
\usepackage{delarray}
\usepackage{fullpage}
\usepackage{enumerate}
\usepackage{paralist}
\usepackage[all]{xy}
\usepackage{tikz}
\usepackage{verbatim}
\usepackage{cite}

\usepackage{hyperref}
%\hypersetup{colorlinks=false}

%\numberwithin{equation}{section}

%%%%%%%%%%%%%%%%%%%%%%%%%%%
 
% Example of definition of color, red, blue etc (already defined)
\definecolor{darkgreen}{cmyk}{1,0,1,.2}
\definecolor{m}{rgb}{1,0.1,1}
% then you can do the following:

 %%%%%%%%%%%%%%%%%%%%%%%%%%%

 %%%%%%%%%%%%%%%%%%%%%%%%%%%

\newdimen\theight
\def\TeXref#1{%
             \leavevmode\vadjust{\setbox0=\hbox{{\tt
                     \quad\quad  {\small \textrm #1}}}%
             \theight=\ht0
             \advance\theight by \lineskip
             \kern -\theight \vbox to
             \theight{\rightline{\rlap{\box0}}%
             \vss}%
             }}%

 %%%%%%%%%%%%%%%%%%%%%%%%%%%

%\newcommand{\upn}{{\rm}}

\newcommand{\sm}{\smallsetminus} 

\renewcommand{\epsilon}{\varepsilon}
\newcommand{\ol}{\overline}

\DeclareMathOperator{\Cl}{Cl}

\DeclareMathOperator{\dom}{dom}
\DeclareMathOperator{\im}{im}

\DeclareMathOperator{\id}{id} 
\DeclareMathOperator{\supp}{supp}

\DeclareMathOperator{\Iso}{Iso}

\DeclareMathOperator{\Int}{Int}
\DeclareMathOperator{\ev}{ev}

\newcommand{\FF}{\mathcal{F}}

\newcommand{\KK}{\mathcal{K}} 
\newcommand{\GG}{\mathcal{G}}
\newcommand{\CC}{\mathcal{C}}
\newcommand{\MM}{\mathcal{M}}

\newcommand{\HH}{\mathcal{H}}
\newcommand{\VV}{\mathcal{V}}
\newcommand{\UU}{\mathcal{U}}

\newcommand{\DD}{\mathcal{D}}
\newcommand{\WW}{\mathcal{W}}
\newcommand{\NN}{\mathcal{N}}
\newcommand{\ZZ}{\mathcal{Z}}

\newcommand{\Z}{\mathbb{Z}}

\newcommand{\E}{\mathbb{E}}
\newcommand{\F}{\mathbb{F}}
\newcommand{\R}{\mathbb{R}}
\newcommand{\N}{\mathbb{N}}

%%%%%%%%%%%%%%%%%%%%%%

\theoremstyle{plain}

\newtheorem{thm}{Theorem}[section]
\newtheorem{lem}[thm]{Lemma}
\newtheorem{cor}[thm]{Corollary}
\newtheorem{prop}[thm]{Proposition}

\theoremstyle{definition}

\newtheorem{defn}[thm]{Definition}

\newtheorem{ex}[thm]{Example}

\newtheorem{quest}[thm]{Question}

\theoremstyle{remark}

\newtheorem{rem}{Remark}

\newtheorem{claim}{Claim}

\title{Bounded geometry and leaves}

\author[J.A. \'Alvarez L\'opez]{Jes\'us A. \'Alvarez L\'opez}
\address{Departamento de Xeometr\'{\i}a e Topolox\'{\i}a\\
         Facultade de Matem\'aticas\\
         Universidade de Santiago de Compostela\\
         Campus Vida\\
         15782 Santiago de Compostela\\
         Spain}
\email{jesus.alvarez@usc.es}
  
\author[R. Barral Lij\'o]{Ram\'on Barral Lij\'o}
\address{Departamento de Xeometr\'{\i}a e Topolox\'{\i}a\\
         Facultade de Matem\'aticas\\
         Universidade de Santiago de Compostela\\
         Campus Vida\\
         15782 Santiago de Compostela\\
         Spain}
\email{ramon.barral@usc.es}

\subjclass{57R30; 53C12}

\thanks{The first author is partially supported by MICINN (Spain), grant MTM2011-25656}

\keywords{$C^\infty$ convergence of Riemannian manifolds; bounded geometry; Riemannian foliated space}
	
\date{}

\begin{document}

\maketitle

\begin{abstract}  
  The main theorem states that any complete connected Riemannian manifold of bounded geometry can be isometrically realized as a leaf with trivial holonomy in a compact Riemannian foliated space.
\end{abstract}  

\tableofcontents

\section{Introduction}\label{s: intro}

Recall that a foliated space $X\equiv(X,\FF)$ of dimension $n$ is a topological space $X$ equipped with a partition $\FF$ into connected manifolds (leaves) so that $X$ can be locally described as a product $B\times Z$, where $B$ is an open ball in $\R^n$ and $Z$ any topological space (local transversal), and the slices $B\times\{*\}$ correspond to open sets in the leaves. This $\FF$ is called a foliated structure or lamination. Foliated spaces are usually assumed to be Polish to get better properties. Many basic notions about foliations can be obviously extended to foliated spaces, like foliated charts, plaques, foliated atlas, holonomy pseudogroup, holonomy group and holonomy covering of the leaves, minimality, transitivity, foliated maps, etc. Some basic results can be extended as well; for instance, there is an obvious version of the Reeb local stability theorem, and the union of leaves without holonomy is a meager subset if $X$ is second countable. Interesting classes of foliated spaces show up in several areas of mathematics, like in dynamics, arithmetics, tessellations, graphs and foliation theory (minimal sets). 

A $C^\infty$ foliated structure is given by a foliated atlas whose changes of coordinates are leafwise $C^\infty$, with ambient-space-continuous leafwise derivatives of arbitrary order. This gives rise to the concept of $C^\infty$ foliated space. To emphasize the difference, the foliated structure underlying a $C^\infty$ foliated structure may be called topological. On a $C^\infty$ foliated space $X\equiv(X,\FF)$, the concept of $C^\infty$ function is defined by requiring that its local expressions, using foliated coordinates, are leafwise $C^\infty$, with ambient-space-continuous leafwise partial derivatives of arbitrary order. $C^\infty$ bundles and sections also make sense on $X$, defined by requiring that their local descriptions are given by $C^\infty$ functions in the above sense. For instance, the tangent bundle $TX$ (or $T\FF$) is the $C^\infty$ vector bundle on $X$ that consists of the vectors tangent to the leaves, and a Riemannian metric on $X$ consists of Riemannian metrics on the leaves fitting together nicely to form a $C^\infty$ section on $X$. This gives rise to the concept of Riemannian foliated space.

$C^\infty$ foliated maps between $C^\infty$ foliated spaces can be similarly defined; in particular, $C^\infty$ foliated immersions, submersions, (local) diffeomorphisms and (local) embeddings between $C^\infty$ foliated spaces have obvious meanings. If a homeomorphism between $C^\infty$ foliated spaces is $C^\infty$ and its restrictions to the leaves are diffeomorphisms, then it is a $C^\infty$ diffeomorphism, as follows easily from the continuity of the inversion of $C^\infty$ diffeomorphisms between $C^\infty$ manifolds with respect to the $C^\infty$ topology \cite[p.~64, Exercise~9]{Hirsch1976}. Several results about foliated spaces have obvious $C^\infty$ versions, like the Reeb local stability theorem.

Standard references about foliated spaces are \cite{MooreSchochet1988}, \cite[Chapter~11]{CandelConlon2000-I}, \cite[Part~1]{CandelConlon2003-II} and \cite{Ghys2000}. See also \cite[Section~2.1]{AlvarezBarralCandel2016} for a quick summary of what is needed here.

On the other hand, recall that a Riemannian manifold $M$ is said to be of bounded geometry when it has a positive injectivity radius, and the $m$-th covariant derivative of the curvature tensor has uniformly bounded norm for all order $m$; in particular, $M$ is complete by the positivity of the injectivity radius. The following are typical examples where bounded geometry holds: coverings of closed connected Riemannian manifolds, connected Lie groups with left invariant metrics, and leaves of compact Riemannian foliated spaces. More examples can be produced by using compactly supported perturbations of given Riemannian manifolds of bounded geometry. In fact, any smooth manifold admits a metric of bounded geometry \cite{Greene1978}. We will focus in the case of leaves of compact Riemannian foliated spaces, showing that this example indeed characterizes bounded geometry.

\begin{thm}\label{t: bounded geometry => leaf of a compact fol sp}
	Any connected Riemannian manifold of bounded geometry is isometric to a leaf with trivial holonomy of some compact Riemannian foliated space.
\end{thm}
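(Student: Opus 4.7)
The plan is to realize $M$ as a leaf in a compact Riemannian foliated space built from an appropriate moduli space of pointed Riemannian manifolds of bounded geometry, topologised by pointed $C^\infty$ (Cheeger--Gromov) convergence. Concretely, fix the bounds of $M$ (positive lower bound on injectivity radius, uniform upper bounds on $\|\nabla^m R\|$ for every $m$) and let $\MM_*$ be the set of isometry classes of pointed complete connected Riemannian manifolds $(N,q)$ sharing these bounds. Declare $[N_n,q_n]\to[N,q]$ iff for every $R>0$ and $k\in\N$ there exist pointed $C^\infty$ embeddings $\phi_n:B_N(q,R)\to N_n$, $\phi_n(q)=q_n$, with $\phi_n^*g_n\to g$ in the $C^k$ topology. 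Under the prescribed uniform bounds, $\MM_*$ is compact and Polish, and the \emph{change-of-basepoint} relation carves it into orbits of the form $N/\Iso(N)$.

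First I would make the foliated structure on $\MM_*$ precise: small foliated charts are obtained by representing nearby classes via normal coordinates around the basepoint of a reference $(N,q)$; the plaque direction corresponds to shifting the basepoint inside $N$, while the transverse direction encodes the germ of the pointed manifold modulo this shift. Bounded geometry, together with $C^\infty$ convergence, yields ambient-continuous leafwise derivatives of every order, so $\MM_*$ is a $C^\infty$ foliated space whose leafwise metrics assemble into a Riemannian structure. Setting
\[
X_0:=\overline{\{[M,p]:p\in M\}}\subset\MM_*,
\]
one obtains a compact Riemannian foliated subspace. The tautological map $p\mapsto[M,p]$ identifies $M/\Iso(M)$ with the leaf through $[M,p_0]$ and is a Riemannian covering of that leaf.

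The main obstacle is to upgrade this to an actual isometry with a leaf and to arrange trivial holonomy. For this I would enrich the data by an auxiliary marking: a $C^\infty$ function $c:M\to[0,1]$, all of whose covariant derivatives are uniformly bounded, chosen to be \emph{locally rigid} in the sense that for every pair of distinct points $p,q\in M$ no local isometry of $(M,g)$ defined near $p$ and sending $p$ to $q$ can intertwine $c$ near $p$ with $c$ near $q$. Existence of such $c$ on a bounded-geometry manifold is by a genericity argument in the Fr\'echet space of markings with uniformly bounded jets, or by a direct combinatorial construction in the spirit of aperiodic colourings; see \cite[Section~2.1]{AlvarezBarralCandel-universal} for related ideas. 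The marked moduli space $\MM_*^{\mathrm{mark}}$ of pointed triples $[N,g_N,c_N,q]$ with the same bounds (plus uniform bounds on derivatives of the marking) and with topology strengthened by $C^\infty$ convergence of the markings is again compact, by Cheeger--Gromov for the metrics and Arzel\`a--Ascoli for the markings.

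Finally, let $X$ be the closure of $\{[M,g,c,p]:p\in M\}$ in $\MM_*^{\mathrm{mark}}$. By local rigidity of $c$, the map $\iota:M\to X$, $p\mapsto[M,g,c,p]$, is injective and is a homeomorphism onto a leaf $L_0$; the pointed $C^\infty$ topology then forces $\iota:M\to L_0$ to be a $C^\infty$ isometry. Triviality of the holonomy of $L_0$ is a consequence of the same rigidity: a nontrivial holonomy germ along a loop in $L_0$ would yield a local self-equivalence of the marked bounded-geometry data around some point of $M$ that is not the identity, contradicting the choice of $c$. Viewing $X$ as a Riemannian foliated space (with the leafwise metric coming from the underlying Riemannian structures and the marking regarded as part of the transverse bookkeeping) yields the required compact Riemannian foliated space in which $M$ is realized isometrically as a leaf with trivial holonomy.
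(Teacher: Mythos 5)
Your overall strategy -- realize $M$ as a leaf in a compact Riemannian foliated space built from a moduli space of pointed manifolds carrying an auxiliary decoration, with the decoration breaking symmetry -- matches the paper's. But there are several concrete gaps, stemming mainly from your choice of a scalar marking $c:M\to[0,1]$ in place of the paper's Hilbert-space-valued $C^\infty$ embedding $f:M\to\E$.

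\textbf{Foliated structure.} You propose to build foliated charts on $\MM_*^{\mathrm{mark}}$ by normal coordinates around the basepoint. This is the route the paper explicitly mentions and declines to take (end of Section~\ref{s: intro}): it requires every class in the closure to satisfy the ``locally non-periodic'' condition, i.e.\ that $N\to\Iso(N,c_N)\backslash N$ is a covering for all $(N,c_N)$ appearing in the closure, and you never verify this. The paper sidesteps the issue by working on $\widehat\MM_{*,\text{\rm imm}}^\infty(n)$: since $f$ is an immersion, $\chi_{M,f}=\Pi_V\circ(f-e)$ directly furnishes leafwise coordinates (Lemma~\ref{l: chi_M,f : B_M(x,4 rho + kappa sigma) to V}, Proposition~\ref{p: fol str}), and the immersion property forces $\Iso(N,h)$ to act properly discontinuously for every limit $(N,h)$. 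Crucially, Claim~\ref{cl: imm} in Proposition~\ref{p: bounded geometry} shows the closure stays inside $\widehat\MM_{*,\text{\rm imm}}^\infty(n)$ because $\inf_N|\bigwedge^ndh|\ge\inf_M|\bigwedge^ndf|>0$; a scalar marking $c$ cannot be an immersion for $n\ge 2$ and so gives you no such guarantee.

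\textbf{Existence of the marking.} You assert the existence of a ``locally rigid'' scalar $c$ with uniformly bounded jets ``by a genericity argument $\dots$ or by a direct combinatorial construction in the spirit of aperiodic colourings'', and wave at~\cite{AlvarezBarralCandel-universal}. This is precisely the kind of statement the paper does \emph{not} prove and in fact lists as an open problem (Questions~\ref{quest: aperiodic} and~\ref{quest: graph} concern aperiodic decorations that would kill all isometries in the closure). What the paper actually constructs (Proposition~\ref{p: bounded geometry}) is much cheaper: an explicit $\R^{(c+1)(n+2)}$-valued embedding $f$ assembled from bump functions, normal parametrizations, and a sphere map, using a coloring of a net of balls (Claim~\ref{cl: coloring of the covering}). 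The only symmetry-breaking needed is $\Iso(M,f)=\{\id_M\}$, which is automatic because $f$ is injective, far weaker than your local rigidity.

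\textbf{Trivial holonomy.} Your argument -- ``a nontrivial holonomy germ along a loop in $L_0$ would yield a local self-equivalence $\dots$ that is not the identity'' -- is intuitively along the right lines, but it is not a proof: it presupposes a precise correspondence between holonomy germs and germs of local equivalences of the decorated data. The paper establishes exactly this correspondence (Theorem~\ref{t: widehat FF_*,imm(n)}-\eqref{i: holonomy covering} and Proposition~\ref{p: holonomy covering}: $\hat\iota_{M,f}:M\to\im\hat\iota_{M,f}$ is the holonomy covering), and then trivial holonomy follows since $\Iso(M,f)=\{\id_M\}$ makes $\hat\iota_{M,f}$ a bijection. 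You also leave unaddressed the issue that $C^\infty$ convergence describes a topology at all (Theorem~\ref{t: C^infty convergence in widehat MM_*(n)}), which the paper has to prove by constructing a complete metrizable uniformity.

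In short, your plan captures the right intuition, but the scalar ``locally rigid'' marking is both harder to produce (unproven, arguably an open question) and less useful than the embedding the paper actually builds, which simultaneously (a) provides the foliated charts, (b) keeps the closure inside the space where those charts exist, and (c) trivially kills $\Iso(M,f)$.
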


It is commonly accepted that such a result should be true, and that it should follow by using the closure of the canonical embedding of the manifold into the Gromov space $\MM_*$ of pointed proper metric spaces\cite{Gromov1981}, \cite[Chapter~3]{Gromov1999}, or, better, into its smooth version, the space $\MM_*^\infty(n)$ of isometry classes of pointed complete connected Riemannian $n$-manifolds with the topology defined by the $C^\infty$ convergence \cite[Chapter~10, Section~3.2]{Petersen1998}, \cite[Theorem~1.2]{AlvarezBarralCandel2016}. However, to the authors knowledge, no complete proof has been given so far.

A complete connected Riemannian $n$-manifold $M$ is called non-periodic (respectively, locally non-periodic) if $\Iso(M)=\{\id_M\}$ (respectively, the canonical projection $M\to\Iso(M)\backslash M$ is a covering map), where $\Iso(M)$ denotes the isometry group of $M$. The non-periodic and locally non-periodic manifolds define subspaces of $\MM_*^\infty(n)$ respectively denoted by $\MM_{*,\text{\rm np}}^\infty(n)$ and $\MM_{*,\text{\rm lnp}}^\infty(n)$. There is a canonical map $\iota_M:M\to\MM_*^\infty(n)$, given by $\iota_M(x)=[M,x]$ (the isometry class of $(M,x)$), which induces a continuous injection $\bar{\iota}_M:\Iso(M)\backslash M\to\MM_*^\infty(n)$. The images of all possible maps $\iota_M$ form a partition $\FF_*(n)$ of $\MM_*^\infty(n)$. The restriction of $\FF_*(n)$ to $\MM_{*,\text{\rm lnp}}^\infty(n)$ is denoted by $\FF_{*,\text{\rm lnp}}(n)$. For $n\ge2$, $\MM_{*,\text{\rm lnp}}^\infty(n)$ is open and dense in $\MM_*^\infty(n)$, and $\FF_{*,\text{\rm lnp}}(n)$ is a Riemannian foliated space of dimension $n$ so that each map $\iota_M:M\to\im\iota_M$ is a local isometry and the holonomy covering of the leaf $\im\iota_M$ \cite[Theorem~1.3]{AlvarezBarralCandel2016}; in particular, $\MM_{*,\text{\rm np}}^\infty(n)$ is the union of leaves with trivial holonomy. Moreover $\Cl_\infty(\im\iota_M)$ is compact  if and only if $M$ is of bounded geometry \cite[Theorem~12.3]{AlvarezBarralCandel2016} (see also \cite{Cheeger1970}, \cite[Chapter~10, Sections~3 and~4]{Petersen1998}), where $\Cl_\infty$ denotes the closure operator in $\MM_*^\infty(n)$. Then, analyzing the cases where $\Cl_\infty(\im\iota_M)\subset\MM_{*,\text{\rm lnp}}(n)$, a version of Theorem~\ref{t: bounded geometry => leaf of a compact fol sp} follows assuming restrictions on $M$ \cite[Theorem~1.5]{AlvarezBarralCandel2016}. 

To prove Theorem~\ref{t: bounded geometry => leaf of a compact fol sp} with complete generality, we refine the above arguments as follows. Fix a separable Hilbert space $\E$ and any natural\footnote{It is assumed that $0$ is natural.} $n$. Consider pairs $(M,f)$ and triples $(M,f,x)$, where $M$ is a complete connected Riemannian $n$-manifold, $f\in C^\infty(M,\E)$ and $x\in M$. An \emph{equivalence} $\phi:(M,f)\to(N,h)$ is an isometry $\phi:M\to N$ such that $\phi^*h=f$. If moreover distinguished points, $x\in M$ and $y\in N$, are preserved, then $\phi:(M,f,x)\to(N,h,y)$ is called a \emph{pointed equivalence}. The group of self equivalences of $(M,f)$ is denoted by $\Iso(M,f)$. If there is a pointed equivalence $(M,f,x)\to(N,h,y)$, then the triples $(M,f,x)$ and $(N,h,y)$ are declared to be \emph{equivalent}. The equivalence class of each $(M,f,x)$ is denoted by $[M,f,x]$. Let $\widehat\MM_*(n)$ denote the set\footnote{Like in the cases of $\MM_*$ and $\MM_*^\infty(n)$, without loss of generality, it can be assumed that the underlying set of any such $M$ is contained in \(\R\), so that $\widehat\MM_*(n)$ becomes a well defined set.} of such equivalence classes.
  
\begin{defn}\label{d: C^infty convergence in widehat MM_*(n)}
	For each $m\in\N$, a sequence $[M_i,f_i,x_i]$ in $\widehat\MM_*(n)$ is said to be \emph{$C^m$ convergent} to $[M,f,x]\in\widehat\MM_*(n)$ if, for each compact domain\footnote{Here, a \emph{domain} in $M$ is a connected $C^\infty$ submanifod, possibly with boundary, of the same dimension as $M$.} $\Omega\subset M$ containing $x$, there is a pointed $C^{m+1}$ embedding $\phi_i:(\Omega,x)\to(M_i,x_i)$ for each large enough $i$ such that $\phi_i^*g_i\to g|_\Omega$ and $\phi_i^*f_i\to f|_\Omega$ as $i\to\infty$ with respect to the $C^m$ topology \cite[Chapter~2]{Hirsch1976}. If $[M_i,f_i,x_i]$ is $C^m$ convergent to $[M,f,x]$ for all $m$, then it is said that $[M_i,f_i,x_i]$ is \emph{$C^\infty$ convergent} to $[M,f,x]$.
\end{defn}

It is not completely obvious that this $C^\infty$ convergence satisfies the conditions to define a topology \cite{Koutnik1985}, \cite{GutierresHofmann2007}. Thus the following result is not trivial.

\begin{thm}\label{t: C^infty convergence in widehat MM_*(n)} 
	The $C^\infty$ convergence in $\widehat\MM_*(n)$ describes a Polish topology.
\end{thm}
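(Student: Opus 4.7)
The plan is to follow the argument of \cite[Theorem~1.2]{AlvarezBarralCandel-universal}, which shows $\MM_*^\infty(n)$ is Polish, adapting each step so as to carry the extra data $f$. I would begin by exhibiting an explicit basis for a candidate topology $\TT$: for $[M,f,x]\in\widehat\MM_*(n)$, $R>0$, $\epsilon>0$, and $m\in\N$, let $\UU([M,f,x];R,\epsilon,m)$ consist of those $[N,h,y]$ admitting a pointed $C^{m+1}$ embedding $\phi:(\bar B^M(x,R),x)\to(N,y)$ satisfying $\norm{\phi^{*}g_N-g_M}_{C^m}<\epsilon$ and $\norm{\phi^{*}h-f}_{C^m}<\epsilon$ on $\bar B^M(x,R)$. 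A standard composition-and-restriction argument shows these sets form a basis: if $[N,h,y]$ lies in $\UU([M,f,x];R,\epsilon,m)$ via an embedding $\phi$, then on a slightly shrunken ball the inverse $\phi^{-1}$ is uniformly $C^{m+1}$-controlled, so neighborhoods of $[N,h,y]$ can be mapped back into neighborhoods of $[M,f,x]$. The coincidence of $\TT$-convergence with Definition~\ref{d: C^infty convergence in widehat MM_*(n)} is then immediate via the exhaustion of $M$ by metric balls.

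Next I would metrise $\TT$ via
\[
d([M,f,x],[N,h,y])=\inf\bigl\{\tfrac1R+\epsilon:\ [N,h,y]\in\UU([M,f,x];R,\epsilon,\lceil R\rceil)\ \text{and the analogous condition with the roles swapped also holds}\bigr\}.
\]
Symmetry is automatic; the triangle inequality follows by composing embeddings and controlling $C^m$-norms of pull-backs via the chain rule; and if $d([M,f,x],[N,h,y])=0$, then a diagonal Arzel\`a--Ascoli extraction on an exhaustion of $M$ produces a pointed isometry $M\to N$ pulling $h$ back to $f$, so the classes agree. That $d$ induces $\TT$ is clear from the construction.

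For separability, I would start from a countable $C^\infty$-dense set $\{[M_i,x_i]\}\subset\MM_*^\infty(n)$, and, on each $M_i$, approximate any $f\in C^\infty(M_i,\E)$ over an exhaustion by balls using a countable family built from finite-rank projections of $\E$ onto rational subspaces combined with rational Taylor data in a fixed $C^\infty$ atlas, patched with a fixed cut-off; the resulting countable family is dense in $\widehat\MM_*(n)$. For completeness, given a $d$-Cauchy sequence $[M_i,f_i,x_i]$, its image in the Polish space $\MM_*^\infty(n)$ converges to some $[M,x]$. Passing to a subsequence with geometrically decreasing $d$-distances, the ensuing approximate embeddings $\phi_{ij}:\bar B^M(x,R_j)\to M_i$ have transition maps $\phi_{i'j}^{-1}\circ\phi_{ij}$ converging to $\id$ in $C^\infty$ on suitable overlaps, so the pull-backs $\phi_{ij}^{*}f_i$ form a $C^\infty$-Cauchy family on each $\bar B^M(x,R_j)$. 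A diagonal extraction produces a limit $f\in C^\infty(M,\E)$, and the compatibility of pull-backs of $g_{M_i}$ already encoded in the proof for $\MM_*^\infty(n)$ forces $[M_i,f_i,x_i]\to[M,f,x]$.

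The main obstacle is precisely this completeness step: the approximate embeddings that witness convergence in $\MM_*^\infty(n)$ must be aligned with the Cauchy behaviour of the $f_i$, so that a single diagonal extraction delivers $g_M$ and $f$ consistently. Managing the book-keeping for the $f_i$, together with the uniqueness of the limit $f$ on overlaps, is where real work is needed; the remaining items essentially transcribe the corresponding arguments for $\MM_*^\infty(n)$.
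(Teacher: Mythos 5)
Your high-level plan (exhibit a topology, show it is metrizable, then establish separability and completeness) matches the paper's, and your basic neighborhoods $\UU([M,f,x];R,\epsilon,m)$ are essentially the paper's sets $\widehat D^m_{R,r}(M,f,x)$ from Definition~\ref{d: widehat D^m_R,r}. However, the paper does \emph{not} try to write down an explicit metric on $\widehat\MM_*(n)$. Instead it introduces the entourages $\widehat U^m_{R,r}$ built from $(m,R,\lambda,\epsilon)$-pointed local quasi-equivalences, proves the composition law $\widehat U^m_{R,r}\circ\widehat U^m_{e^rR,s}\subset\widehat U^m_{R,r+s}$ and the inversion law $(\widehat U^m_{e^rR,r})^{-1}\subset\widehat U^m_{R,r}$ (Proposition~\ref{p: C^infty uniformity in widehat MM_*(n)}), checks separation (Proposition~\ref{p: Hausdorff widehat MM_*(n)}), shows the $\widehat U$- and $\widehat D$-neighborhoods generate the same topology (Propositions~\ref{p: widehat D^m_R,r'(M,f,x) subset widehat U^m_R,r(M,f,x)} and~\ref{p: widehat U^m_R,r'(M,f,x) subset widehat D^m_R,r(M,f,x)}), and then invokes the general metrization theorem for uniformities with a countable base of entourages. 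This replaces the need for an explicit metric.

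This is where your proposal has a genuine gap. The formula $d=\inf\{1/R+\epsilon:\dots\}$ is not obviously a metric, and the triangle inequality does \emph{not} follow in the naive way you describe. When you compose two approximate embeddings $\phi:\bar B^M(x,R)\to N$ and $\psi:\bar B^N(y,R')\to L$, the domain of $\psi\circ\phi$ is only $\phi^{-1}(\bar B^N(y,R'))$, which is roughly a ball of radius $\min\{R,R'/(1+\epsilon)\}$, not a ball that makes $1/R''\le 1/R+1/R'$; and the $C^m$-norm of the composition accumulates derivatives of $\phi$ multiplicatively via the chain and Leibniz rules, so the $C^m$-error is not additive either. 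The paper's entourage composition law is engineered exactly to absorb these losses: the radius shrinks by a factor $e^{-r}$ and the quasi-isometry constants multiply (hence the logarithmic parameter $r=\log\lambda$ is additive). Without either adopting the uniformity formalism or verifying a substitute subadditivity estimate, your metric formula does not deliver a metric, and the remainder of the argument (separability, completeness, coincidence with the declared convergence) is left hanging on an undefined object. You have flagged the completeness step as the hard one, but in fact the metrization step is already nontrivial. If you keep the explicit-metric approach you should either (a) prove a quantitative composition estimate showing that the infimum is comparable to a genuine metric, or (b) pass through a countable family of pseudometrics and use the standard $\sum 2^{-k}\min(1,d_k)$ device, which is morally what the uniformity metrization theorem does.

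A smaller issue: your $C^m$-norm on $\bar B^M(x,R)$ is coordinate-dependent as written. The paper is careful to use the intrinsic norm $\|\cdot\|_{C^m,\Omega,g}$ defined via covariant derivatives (or to fix chart data $\UU,\KK$ and invoke the equivalence~\eqref{norm equiv}). Without fixing this, your basic neighborhoods are not well defined as subsets of $\widehat\MM_*(n)$. Finally, for completeness you correctly identify the need to align the approximate embeddings with the Cauchy behaviour of the $f_i$; the paper's Proposition~\ref{p: widehat MM_*^infty(n) is completely metrizable} carries this out with a telescoping estimate using the quasi-equivalence bounds and Lemma~\ref{l: f_*^(m)} to pass between bounds on $f_*^{(m)}$ and bounds on coordinate derivatives of $f$; this is the concrete mechanism your sketch needs to supply.
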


The topology given by Theorem~\ref{t: C^infty convergence in widehat MM_*(n)} will be called the \emph{$C^\infty$ topology}, and the corresponding space is denoted by $\widehat\MM_*^\infty(n)$. The closure operator in this space will be denoted by $\widehat{\Cl}_\infty$. The following maps are canonical and continuous: a \emph{forgetful} map $\widehat\MM_*^\infty(n)\to\MM_*^\infty(n)$, $[M,f,x]\mapsto[M,x]$, and an \emph{evaluation} map $\ev:\widehat\MM_*^\infty(n)\to\E$, $[M,f,x]\mapsto f(x)$. Note that $\ev:\widehat\MM_*(0)\to\E$ is a homeomorphism. Moreover, for each complete connected Riemannian $n$-manifold $M$ and any $f\in C^\infty(M,\E)$, there is a canonical continuous map $\hat\iota_{M,f}:M\to\widehat\MM_*^\infty(n)$, given by $\hat\iota_{M,f}(x)=[M,f,x]$, which induces a continuous injection $\bar{\iota}_{M,f}:\Iso(M,f)\backslash M\to\MM_*^\infty(n)$. The images of the maps $\hat\iota_{M,f}$ form a natural partition of $\widehat\MM_*^\infty(n)$, denoted by $\widehat\FF_*(n)$. Let $C^\infty_{\text{\rm imm}}(M,\E)$ be the set of $C^\infty$ immersions $M\to\E$, and let $\widehat\MM_{*,\text{\rm imm}}^\infty(n)$ be the $\widehat\FF_*(n)$-saturated subspace of $\widehat\MM_*(n)$ consisting of classes $[M,f,x]$ with $f\in C^\infty_{\text{\rm imm}}(M,\E)$. The restriction of $\widehat\FF_*(n)$ to $\widehat\MM_{*,\text{\rm imm}}(n)$ is denoted by $\widehat\FF_{*,\text{\rm imm}}(n)$. Observe that the canonical projection $M\to\Iso(M,f)\backslash M$ is a covering map if $f\in C^\infty_{\text{\rm imm}}(M,\E)$. 

On the other hand, let $\widehat\MM_{*,\text{\rm c}}^\infty(n)$ (respectively, $\widehat\MM_{*,\text{\rm o}}^\infty(n)$) be the $\widehat\FF_*(n)$-saturated subspace of $\widehat\MM_*(n)$ consisting of classes $[M,f,x]$ such that $M$ is compact (respectively, open). Observe that, if $[N,h,y]$ is close enough to any $[M,f,x]\in\widehat\MM_{*,\text{\rm c}}^\infty(n)$, then $N$ is diffeomorphic to $M$. Thus $\widehat\MM_{*,\text{\rm c}}^\infty(n)$ is open in $\widehat\MM_*(n)$, and therefore $\widehat\MM_{*,\text{\rm o}}^\infty(n)$ is closed. Hence these are Polish subspaces of $\widehat\MM_*(n)$, as well as their intersections with any Polish subspace. Let $\widehat\MM_{*,\text{\rm imm,c/o}}^\infty(n)=\widehat\MM_{*,\text{\rm c/o}}^\infty(n)\cap\widehat\MM_{*,\text{\rm imm}}^\infty(n)$. The restrictions of $\widehat\FF_*(n)$ to $\widehat\MM_{*,\text{\rm c/o}}(n)$ and $\widehat\MM_{*,\text{\rm imm,c/o}}(n)$ are denoted by $\widehat\FF_{*,\text{\rm c/o}}(n)$ and $\widehat\FF_{*,\text{\rm imm,c/o}}(n)$, respectively.

\begin{thm}\label{t: widehat FF_*,imm(n)}
  	The following properties hold:
  		\begin{enumerate}[{\rm (}i\/{\rm )}]

  			\item\label{i: widehat MM_*,imm(n) is G_delta} $\widehat\MM_{*,\text{\rm imm}}^\infty(n)$ is Polish and dense in $\widehat\MM_*^\infty(n)$.

  			\item\label{i: widehat FF_*,imm(n) is a foliated structure} $\widehat\FF_{*,\text{\rm imm}}(n)$ is a foliated structure of dimension $n$.
			
			\item\label{i: widehat FF_*,imm,o(n) is transitive} $\widehat\FF_{*,\text{\rm imm,o}}(n)$ is transitive.

  			\item\label{i: widehat FF_*,imm(n) is C^infty} There is a unique $C^\infty$ foliated structure $\widehat\FF_{*,\text{\rm imm}}^\infty(n)$ on $\widehat\MM_{*,\text{\rm imm}}^\infty(n)$, whose underlying topological foliated structure is $\widehat\FF_{*,\text{\rm imm}}(n)$, such that $\ev:\widehat\MM_{*,\text{\rm imm}}^\infty(n)\to\E$ is a $C^\infty$ immersion. 
			
			\item\label{i: widehat FF_*,imm(n) is Riemannian} There is a unique Riemannian metric on $\widehat\MM_{*,\text{\rm imm}}^\infty(n)\equiv(\widehat\MM_{*,\text{\rm imm}}^\infty(n),\widehat\FF_{*,\text{\rm imm}}^\infty(n))$ such that $\iota_{M,f}:M\to\hat\iota_{M,f}$ is a local isometry for all complete connected Riemannian $n$-manifold $M$ and $f\in C^\infty_{\text{\rm imm}}(M,\E)$.

  			\item\label{i: holonomy covering} For all $M$ and $f$ as above, the map $\hat\iota_{M,f}:M\to\im\hat\iota_{M,f}$ is the holonomy covering of the leaf $\im\hat\iota_{M,f}$.

  		\end{enumerate}
\end{thm}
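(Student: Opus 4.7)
The overall plan is to exploit that an immersion $f: M \to \E$ into the infinite-dimensional Hilbert space $\E$ is locally an embedding, so the evaluation map $\ev$ presents leaves as submanifolds of $\E$ and reduces the construction of foliated charts to taking affine transverse slices inside $\E$.

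For (i), write $\widehat\MM_{*,\text{\rm imm}}^\infty(n) = \bigcap_k U_k$, where $U_k \subset \widehat\MM_*^\infty(n)$ is the set of classes $[M,f,x]$ for which $df$ has rank $n$ at every point of $\overline{B_k(x)} \subset M$. Each $U_k$ is open: the $C^2$ convergence on a compact domain $\Omega \supset \overline{B_{k+1}(x)}$ furnishes pointed $C^\infty$ embeddings $\phi_i: \Omega \to M_i$ with $\phi_i^*f_i \to f|_\Omega$ in $C^1$ and $\phi_i(\Omega) \supset \overline{B_k(x_i)}$ for large $i$, which forces $df_i$ to have rank $n$ on $\overline{B_k(x_i)}$. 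Since $M$ is connected and complete, $\bigcap_k U_k = \widehat\MM_{*,\text{\rm imm}}^\infty(n)$, a $G_\delta$ in a Polish space, hence Polish. Density follows because immersions are dense in $C^\infty(M,\E)$ when $\dim \E = \infty$ (a Whitney-type transversality approximation), so any $[M,f,x]$ is $C^\infty$-approximated on compact domains by $[M,f',x]$ with $f' \in C^\infty_{\text{\rm imm}}(M,\E)$.

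For (ii), (iv), and (v), fix $[M_0,f_0,x_0]$ and choose $r > 0$ so small that $\overline{B_0} := \overline{B_{M_0}(x_0,r)}$ is a compact domain with $f_0|_{\overline{B_0}}$ an embedding; pick an affine subspace $S \subset \E$ through $f_0(x_0)$ transverse to $df_0(T_{x_0}M_0)$. For $[M,f,y]$ in a small enough neighborhood $W$ of $[M_0,f_0,x_0]$, the convergence supplies a pointed $C^\infty$ embedding $\psi_{[M,f,y]}: (\overline{B_0},x_0) \to (M,y)$ depending continuously on the class, and an implicit function argument using uniform transversality yields a unique $y' \in \psi_{[M,f,y]}(\overline{B_0})$ near $y$ with $f(y') \in S$. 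Setting $T := \{[M,f,y'] \in W : f(y') \in S\}$, the foliated chart
\[
\Phi : B_0 \times T \longrightarrow W, \qquad (z, [M,f,y']) \longmapsto [M, f, \psi_{[M,f,y']}(z)],
\]
is a homeomorphism onto an open neighborhood of $[M_0,f_0,x_0]$ whose plaques lie inside leaves, proving (ii). Declaring $\Phi$ a $C^\infty$ foliated chart in the leaf direction yields (iv), with uniqueness forced by $\ev \circ \Phi(z,[M,f,y']) = f(\psi_{[M,f,y']}(z))$ being a $C^\infty$ immersion in $z$. Pulling back $\langle\cdot,\cdot\rangle_\E$ via $\ev$ along plaques gives the Riemannian metric of (v), which by construction agrees with the original metric on $M$ along each leaf.

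For (iii), fix a countable dense sequence $\{[M_n,f_n,x_n]\}$ in $\widehat\MM_{*,\text{\rm imm,o}}^\infty(n)$ and an exhaustion of each $M_n$ by compact domains; assemble these pieces into a single connected open manifold $M$ by gluing with thin cylindrical tubes and smoothly extending the $f_n$ to a global immersion $f \in C^\infty_{\text{\rm imm}}(M,\E)$, exploiting that the infinite dimensionality of $\E$ gives unlimited room for interpolation. Every compact domain of every $M_n$ then embeds into $M$ with $f$ close to $f_n$ in $C^\infty$, forcing $\im\hat\iota_{M,f}$ to be dense. For (vi), the chart $\Phi$ shows $\hat\iota_{M,f}$ is a local homeomorphism, and its fibers are the $\Iso(M,f)$-orbits, which act freely and properly discontinuously since $f$ is an immersion; moreover $T$ separates equivalence classes by construction, so the holonomy on $M$-lifted loops is trivial, identifying $\hat\iota_{M,f}$ with the holonomy covering. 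The main obstacle is part (ii): verifying $\Phi$ is a homeomorphism requires continuous dependence of $\psi_{[M,f,y]}$ on $[M,f,y]$ and uniform transversality of $f(M)$ to $S$ across $T$, both delicate consequences of the $C^\infty$ topology and compactness of $\overline{B_0}$.
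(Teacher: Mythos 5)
Your overall strategy is the same as the paper's: use the immersion $f:M\to\E$ to produce transverse slices and build foliated charts through the evaluation map. The paper, however, uses a \emph{linear} orthogonal projection $\chi_{M,f}(x)=\Pi_V(f(x)-e)$ for a fixed $n$-dimensional subspace $V\subset\E$, defining the chart $\Phi=(\chi,\Theta)$ explicitly and proving continuity of $\Theta$, $\Phi$, $\Phi^{-1}$ via sequences of pointed local quasi-equivalences; it never appeals to a continuously varying family of embeddings $\psi_{[M,f,y]}$, precisely because no such family is supplied by the $C^\infty$ topology (Definition~\ref{d: C^infty convergence in widehat MM_*(n)} only produces embeddings $\phi_i$ along convergent sequences, with no canonical or continuous choice). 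You flag this as ``the main obstacle,'' but the way you propose to close it --- continuous dependence of $\psi$ plus an implicit-function argument --- is not available from the definitions, so this is a genuine gap rather than a detail to be filled in; the explicit formula for $\chi$ and the quasi-equivalence estimates in Lemmas~\ref{l: Theta is cont}, \ref{l: Phi^-1 is cont} and Proposition~\ref{p: fol str} are precisely the machinery the paper develops to avoid it.

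There is also a substantive error in your part (v). You propose to obtain the Riemannian metric by pulling back $\langle\cdot,\cdot\rangle_\E$ via $\ev$, and you claim this ``by construction agrees with the original metric on $M$ along each leaf.'' That is false: $f\in C^\infty_{\text{\rm imm}}(M,\E)$ is an arbitrary immersion, not an isometric one, so $f^*\langle\cdot,\cdot\rangle_\E\neq g_M$ in general, and $\ev^*\langle\cdot,\cdot\rangle_\E$ does not make $\hat\iota_{M,f}$ a local isometry as required. The metric on each leaf $\im\hat\iota_{M,f}$ must be the push-forward of $g_M$ along the local diffeomorphism $\hat\iota_{M,f}$; the real content of (v) is then to verify that these leafwise metrics assemble into a $C^\infty$ section over the foliated space, which is Proposition~\ref{p: Riem met} in the paper and which your sketch does not address at all. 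Similarly, you do not verify that the transition maps between two charts $\Phi$, $\widetilde\Phi$ are leafwise $C^\infty$ (Proposition~\ref{p: fol str}), which is the bulk of the work behind (ii) and (iv). Parts (i), (iii), and (vi) of your sketch follow the paper's approach and are essentially correct at the level of detail given.
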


It is possible to give a version of Theorem~\ref{t: widehat FF_*,imm(n)} closer to \cite[Theorem~1.3]{AlvarezBarralCandel2016}, using the subspace $\widehat\MM_{*,\text{\rm lnp}}^\infty(n)$ consisting of the classes $[M,f,x]$ such that $M\to\Iso(M,f)\backslash M$ is a covering map. Such a result could be proved with the obvious adaptation of the proof of \cite[Theorem~1.3]{AlvarezBarralCandel2016}, using the exponential map to define foliated charts. Instead, we have opted for studying $\widehat\MM_{*,\text{\rm imm}}^\infty(n)$ because, in this case, the immersions $f$ directly provide foliated charts.

The following result states that $\widehat\MM_{*,\text{\rm imm}}^\infty(n)$ is universal among the class of Polish Riemannian foliated spaces that satisfy a condition called covering-continuity (Definition~\ref{d: covering-cont}).

\begin{thm}\label{t: universal} 
	A Polish Riemannian foliated space $X$ of dimension $n$ with complete leaves is isometric to a saturated Riemannian foliated subspace of $\widehat\MM_{*,\text{\rm imm}}^\infty(n)$ if and only if $X$ is covering-continuous.
\end{thm}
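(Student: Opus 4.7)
Necessity is the easier direction. It suffices to show that $\widehat\MM_{*,\text{\rm imm}}^\infty(n)$ itself is covering-continuous, since the property evidently passes to saturated subspaces and is preserved by foliated isometries. By Definition~\ref{d: C^infty convergence in widehat MM_*(n)}, the forgetful map $\widehat\MM_{*,\text{\rm imm}}^\infty(n)\to\MM_*^\infty(n)$, $[M,f,x]\mapsto[M,x]$, is continuous, and by Theorem~\ref{t: widehat FF_*,imm(n)}(vi) the pointed manifold $(M,x)$ represents the pointed holonomy cover of the leaf through $[M,f,x]$. Composing, the assignment that sends a point $p$ to the isometry class of its pointed holonomy cover is continuous on $\widehat\MM_{*,\text{\rm imm}}^\infty(n)$, which is precisely covering-continuity.

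For sufficiency, let $X$ be a covering-continuous Polish Riemannian foliated space of dimension $n$ with complete leaves. The plan is to manufacture a single leafwise $C^\infty$ immersion $F\colon X\to\E$ and use it to build a canonical foliated isometry $\Phi\colon X\to\widehat\MM_{*,\text{\rm imm}}^\infty(n)$. Using that $X$ is Polish, pick a countable locally finite $C^\infty$ foliated atlas $\{(U_i,\phi_i)\}$ with relatively compact plaques and a subordinate leafwise $C^\infty$ partition of unity $\{\rho_i\}$; bundling the plaque projections $\pr_i$ of the $\phi_i$ together with the $\rho_i$ into a map to $\E\cong\ell^2$ on disjoint blocks of coordinates, weighted by a rapidly decaying sequence of positive scalars, defines a continuous function $F\in C^\infty(X,\E)$. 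A standard argument using any nonzero $\rho_i$ and the corresponding plaque coordinates shows that $F$ is a leafwise immersion. For each leaf $L$ of $X$ with holonomy covering $\pi_L\colon\widetilde L\to L$ set $\widetilde F_L=F\circ\pi_L$, and define
\begin{equation*}
\Phi(x)=[\widetilde L_x,\widetilde F_{L_x},\tilde x],\qquad\tilde x\in\pi_{L_x}^{-1}(x).
\end{equation*}
This is independent of the lift because deck transformations of $\pi_{L_x}$ are isometries of $\widetilde L_x$ preserving $\widetilde F_{L_x}$, hence induce pointed equivalences of the triples.

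Continuity of $\Phi$ combines the covering-continuity of $X$, which supplies the convergence of the pointed-manifold components $(\widetilde L_{x_i},\tilde x_i)\to(\widetilde L_x,\tilde x)$ in $\MM_*^\infty(n)$, with the continuity of $F$ in foliated charts, which supplies the $C^\infty$ convergence of the pullbacks $\widetilde F_{L_{x_i}}\to\widetilde F_{L_x}$ on each compact domain; together these are exactly the conditions of Definition~\ref{d: C^infty convergence in widehat MM_*(n)}. Saturation of $\Phi(X)$ is automatic from the fact that $\Phi$ sends leaves of $X$ into leaves of $\widehat\FF_{*,\text{\rm imm}}(n)$, and the leafwise local isometry property then follows from the uniqueness clause of Theorem~\ref{t: widehat FF_*,imm(n)}(v), using that $\ev\o\Phi=F$ is a leafwise immersion on $X$.

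The central obstacle is injectivity of $\Phi$ on each leaf. Two lifts $\tilde x,\tilde y\in\widetilde L$ yield the same class iff some $\psi\in\Iso(\widetilde L,\widetilde F_L)$ sends $\tilde x$ to $\tilde y$, so $\Phi$ is injective on $L$ exactly when the automatic inclusion $\Deck(\pi_L)\subseteq\Iso(\widetilde L,\widetilde F_L)$ is an equality. Achieving this rigidity of $\widetilde F_L$ is the delicate step: I would refine the construction of $F$ by appending to it additional leafwise data extracted from $X$ itself — for instance, leafwise partial derivatives of the $\pr_i$ weighted by the $\rho_i$, or higher leafwise jets thereof — which remain well-defined as $C^\infty$ functions on $X$ (hence are automatically invariant under $\Deck(\pi_L)$) while carrying enough local information to preclude any further isometry of $\widetilde L$ that preserves the enlarged $\widetilde F_L$. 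Once this rigidity is secured, continuity, injectivity, and the leafwise local isometry property together promote $\Phi$ to a homeomorphism onto its image and thus to a foliated isometry of $X$ onto the saturated Riemannian foliated subspace $\Phi(X)\subset\widehat\MM_{*,\text{\rm imm}}^\infty(n)$.
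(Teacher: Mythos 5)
Your necessity argument is essentially the paper's: show holonomy-continuity of $\widehat\MM_{*,\text{\rm imm}}^\infty(n)$ via the forgetful map together with Theorem~\ref{t: widehat FF_*,imm(n)}-\eqref{i: holonomy covering}, then use heredity of covering-continuity (Remark~\ref{r: covering-continuous}). That direction is fine. The gap is in the sufficiency direction, precisely at the step you flag as ``the central obstacle.'' Your $F$ is built from plaque projections and a leafwise partition of unity only, so it is a leafwise immersion but nothing guarantees it is injective on $X$, or even injective leafwise. You correctly observe that $\Phi$ is leafwise injective iff $\Deck(\pi_L)=\Iso(\widetilde L,\widetilde F_L)$, but the proposed remedy of tacking on leafwise jets does not obviously achieve this: jets of $F$ are again invariant under any self-equivalence of $(\widetilde L,\widetilde F_L)$, and there is no argument offered as to why enlarging $F$ by data it determines should shrink its symmetry group. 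Moreover, even if leafwise injectivity were secured, you would still lack global injectivity (two points on different leaves can a priori map to the same triple class) and, more seriously, the topological embedding property, which is needed for $\Phi(X)$ to be a Riemannian foliated \emph{subspace}; continuity plus injectivity on a non-compact Polish space does not yield a homeomorphism onto the image.

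The paper avoids all of this in one stroke by strengthening the construction of $F$: Proposition~\ref{p: C^infty embedding X to E} produces a $C^\infty$ \emph{embedding} $f:X\to\E$ of the whole foliated space, not merely a leafwise immersion, by additionally embedding each transversal $Z_i$ into a Hilbert space block $\E_i$ and bundling these with the plaque coordinates and partition of unity. Once $f$ is a topological embedding, everything you need is automatic from the identity $\ev\circ\hat\iota_{X,f}=f$: if $\hat\iota_{X,f}(x)=\hat\iota_{X,f}(y)$ then $f(x)=f(y)$, so $x=y$, giving injectivity; and since $f$ is a topological embedding, so is $\hat\iota_{X,f}$. The rigidity question you grappled with ($\Deck=\Iso$) never needs to be answered directly. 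So the missing idea is: replace ``leafwise immersion'' by ``$C^\infty$ embedding of the foliated space'' in the construction of $F$, and then read off injectivity and the embedding property from $\ev$.
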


In  Theorem~\ref{t: universal}, when $X$ consists of a single leaf $M$, the isometric injection of $M$ into $\widehat\MM_{*,\text{\rm imm}}^\infty(n)$ is $\hat\iota_{M,f}$ for any $C^\infty$ embedding $f:M\to\E$. If moreover $M$ is of bounded geometry, then $f$ can be chosen so that $\widehat{\Cl}_\infty(\im\hat\iota_{M,f})$ is a compact Riemannian foliated subspace of $\widehat\MM_{*,\text{\rm imm}}^\infty(n)$ (Proposition~\ref{p: bounded geometry}). Then Theorem~\ref{t: bounded geometry => leaf of a compact fol sp} follows by considering the isometric injection $\hat\iota_{M,f}:M\to\widehat{\Cl}_\infty(\im\hat\iota_{M,f})$.

There are examples of Lie groups with left invariant metrics that are not coarsely quasi-isometric to any finitely generated group \cite{ChaluleauPittet2001}, \cite{EskinFisherWhyte2012}. Applying the above argument to those Riemannian manifolds, we get compact Riemannian foliated spaces whose leaf holonomy covers are not coarsely quasi-isometric to any finitely generated group.

Theorem~\ref{t: bounded geometry => leaf of a compact fol sp} contrasts with the examples of connected Riemannian manifolds of bounded geometry whose quasi-isometry type cannot be realized as leaves of foliations of codimension one on closed manifolds \cite{AttieHurder1996}, \cite{Zeghib1994}, \cite{Schweitzer1995}, \cite{Schweitzer2011}. If the metric is not considered, any surface can be realized as a leaf of a codimension one foliation on a closed manifold \cite{CantwellConlon1987}, but this fails in higher dimension \cite{Ghys1985}, \cite{InabaNishimoriTakamuraTsuchiya1985}, \cite{AttieHurder1996}, \cite{Souza2011}, \cite{SchweitzerSouza2013}. The study of this realizability problem was initiated in \cite{Sondow1975}.

This work can be considered as a continuation of \cite{AlvarezBarralCandel2016}, and therefore many references to \cite{AlvarezBarralCandel2016} are included.

\section{Preliminaries}\label{s: prelim}

Let $M$ be a Riemannian manifold (possibly with boundary or corners). The following standard notation will be used. The metric tensor is denoted by $g$, the distance function on each of the connected components of \(M\) by $d$, the tangent bundle by $\pi:TM\to M$, the Levi-Civita connection by $\nabla$, and the open and closed balls of center $x\in M$ and radius $r>0$ by $B(x,r)$ and $\ol B(x,r)$, respectively. If needed, ``$M$'' will be added to all of the above notation as a subindex or superindex; when a family of Riemannian manifolds $M_i$ is considered, we may add the subindex or superindex ``$i$'' instead of ``$M_i$''. A covering space of $M$ is assumed to be equipped with the lift of $g$.

For $m\in\Z^+$, let $T^{(m)}M=T\cdots TM$ ($m$ times); we also set $T^{(0)}M=M$. If $l<m$, $T^{(l)}M$ is identified with a regular submanifold of $T^{(m)}M$ via zero sections, and therefore, for each $x\in M$, the notation $x$ may be also used for the zero elements of $T_xM$, $T_xTM$, etc. Let $\pi\colon T^{(m)}M \to T^{(l)}M$ be the vector bundle projection given by composing the tangent bundle projections; in particular, we have $\pi:T^{(m)}M \to M$. Given any $C^m$ map between Riemannian manifolds, $\phi:M\to N$, the induced map $T^{(m)}M\to T^{(m)}N$ will be denoted by $\phi_*^{(m)}$ (or simply $\phi_*$ if $m=1$).

Hilbert manifolds are also considered in some parts of the paper, using analogous notation.

The Levi-Civita connection determines a decomposition $T^{(2)}M=\HH\oplus\VV$, as direct sum of the horizontal and vertical subbundles. The \emph{Sasaki metric} on $TM$ is the unique Riemannian metric $g^{(1)}$ so that $\HH\perp\VV$ and the canonical identities $\HH_\xi\equiv T_\xi M \equiv \VV_\xi$ are isometries for every $\xi\in TM$ \cite{Sasaki1958}. Continuing by induction, for $m\ge2$, the \emph{Sasaki metric} on $T^{(m)}M$ is $g^{(m)}=(g^{(m-1)})^{(1)}$. The notation $d^{(m)}$ is used for the corresponding distance function on the connected components, and the corresponding open and closed balls of center $\xi\in T^{(m)}M$ and radius $r>0$ are denoted by $B^{(m)}(\xi,r)$ and $\ol B^{(m)}(\xi,r)$, respectively. We may add the subindex ``$M$'' to this notation if necessary, or the subindex ``$i$'' instead of ``$M_i$'' for a family of Riemannian manifolds $M_i$. From now on, $T^{(m)}M$ is assumed to be equipped with $g^{(m)}$. For $l<m$, $T^{(l)}M$ becomes a totally geodesic Riemannian submanifold of $T^{(m)}M$ orthogonal to the fibers of $\pi:T^{(m)}M\to T^{(l)}M$, which are also totally geodesic \cite[Remark~1-(i)--(iii)]{AlvarezBarralCandel2016} (see also \cite[Corollary of Theorem~13, and Theorems~14 and~18]{Sasaki1958}).

Let $(U;x^1,\dots,x^n)$ be a chart of $M$. As usual, the corresponding metric coefficients are denoted by $g_{ij}$, and write $(g^{ij})=(g_{ij})^{-1}$. Identify the functions $x^i$ with their lifts to $TU$. We get a chart $(U^{(1)};x_{(1)}^1,\dots,x_{(1)}^{2n})$ of $TM$ with $U^{(1)}=TU$, $x_{(1)}^i=x^i$ and $x_{(1)}^{n+i}=v^i$ for $1\le i\le n$, where the functions $v^i$ give the coordinates of tangent vectors with respect to the local frame $(\partial_1,\dots,\partial_n)$ of $TU$ induced by $(U;x^1,\dots,x^n)$. By induction, for $m\ge2$, let $(U^{(m)};x_{(m)}^1,\dots,x_{(m)}^{2^mn})$ be the chart of $T^{(m)}M$ induced by the chart $(U^{(m-1)};x_{(m-1)}^1,\dots,x_{(m-1)}^{2^{m-1}n})$ of $T^{(m-1)}M$.

Let $\Omega\subset M$ be a compact domain and $m\in\N$.  Fix a finite collection of charts of $M$ that covers $\Omega$, $\UU=\{(U_a;x_a^1,\dots,x_a^n)\}$, and a family of compact subsets of $M$ with the same index set as $\UU$, $\KK=\{K_a\}$, such that $\Omega\subset\bigcup_aK_a$, and $K_a\subset U_a$ for all $a$. The corresponding $C^m$ norm of a $C^m$ tensor $T$ on $\Omega$ is defined by\footnote{The standard multi-index notation is used here.}
	\[
		\|T\|_{C^m,\Omega,\UU,\KK} =\max_a\max_{x\in K_a\cap\Omega}
		\sum_{|I|\le m} \sum_{J,K}\left|\frac{\partial^{|I|}T_{a,J}^K}{\partial x_a^I}(x)\right|\;,
		\]
where $T_{a,J}^K$ are the coefficients of $T$ on $U_a\cap\Omega$ with respect to the frame induced by $(U_a;x_a^1,\dots,x_a^n)$. With this norm, the $C^m$ tensors on $\Omega$ of a fixed type form a Banach space, whose underlying topology is called the \emph{$C^m$ topology}. By taking the projective limit as $m\to\infty$, we get the Fr\'echet space of $C^\infty$ tensors of that type, whose underlying topology is called the \emph{$C^\infty$ topology} (see e.g.\ \cite{Hirsch1976}). We will always consider the $C^k$ topology for $C^k$ tensors on $\Omega$ of a given type ($k\in\N\cup\{\infty\}$); in particular, $C^k(\Omega)$ is always assumed to be equipped with the $C^k$ topology. Observe that $\UU$ and $\KK$ are also qualified to define the norm $\|\ \|_{C^m,\Omega',\UU,\KK}$ for any compact subdomain $\Omega'\subset\Omega$. It is well known that $\|\ \|_{C^m,\Omega,\UU,\KK}$ is equivalent to the norm $\|\ \|_{C^m,\Omega,g}$ defined by
	\[
		\|T\|_{C^m,\Omega,g}=\max_{0\le l\le m}\max_{x\in\Omega}|\nabla^lT(x)|\;;
	\]
i.e., there is some $C\ge1$, depending on $M$, $\Omega$, $\UU$, $\KK$, $g$ and $m$, such that
	\begin{equation}\label{norm equiv}
  		\frac{1}{C}\,\|\ \|_{C^m,\Omega,\UU,\KK}\le\|\ \|_{C^m,\Omega,g}
  		\le C\,\|\ \|_{C^m,\Omega,\UU,\KK}\;.
	\end{equation}
In particular, for $m=0$ and $f\in C^\infty(M)$,
	\begin{equation}\label{|f|_Omega}
		\|f\|_\Omega:=\|f\|_{C^0,\Omega,\UU,\KK}=\|f\|_{C^0,\Omega,g}=\max_{x\in\Omega}|f(x)|\;,
	\end{equation}
which is independent of the choices $\UU$, $\KK$ and $g$. 

The norms $\|\ \|_{C^m,\Omega,\UU,\KK}$ and $\|\ \|_{C^m,\Omega,g}$ have straightforward extensions to tensors with values in a separable Hilbert space $\E$, and satisfy the obvious versions of~\eqref{norm equiv} and~\eqref{|f|_Omega}, and $C^k(M,\E)$ is assumed to be equipped with the $C^k$ topology ($k\in\N\cup\{\infty\}$). 

For $f\in C^\infty(M,\E)$, recall that $\nabla f=df$ (its de~Rham differential). For each $m$, the map
	\[
		f_*^{(m)}\equiv\left(f_*^{(m),1},\dots,f_*^{(m),2^m}\right):T^{(m)}M\to T^{(m)}\E\equiv\E^{2^m}
	\]
is also $C^\infty$ and with values in a separable Hilbert space. In the following lemma, we consider the local representations of $f$ and every $f_*^{(m),\lambda}$ with respect to coordinate systems $(U,x^1,\dots,x^n)$ and $(U^{(m)},x_{(m)}^{1},\dots,x_{(m)}^{2^mn})$ of $M$ and $T^{(m)}M$. Moreover each function on $M$ or $U$ is identified with its lift to $T^{(m)}M$ or $U^{(m)}$.

\begin{lem}\label{l: f_*^(m)}
  	The following properties hold:
		\begin{enumerate}[{\rm(}i\/{\rm)}]
	
  			\item\label{i: f_*^(m)} The local representation of every $f_*^{(m),\lambda}$ is a universal polynomial expression of $x_{(m)}^{n+1},\dots,x_{(m)}^{2^mn}$ and the partial derivatives up to order $m$ of the local representation of $f$.
		
  			\item\label{i: partial up to order m of f} For each $\rho>0$, the partial derivatives up to order $m$ of the local representation of $f$ are given by universal linear expressions of the functions $(\sigma^{(m)}_{\rho,\mu})^*f_*^{(m),\lambda}$ for $n+1\le\mu\le2^mn$, where $\sigma^{(m)}_{\rho,\mu}:U\to U^{(m)}$ is the section of $\pi:U^{(m)}\to U$ determined by\footnote{Kronecker's delta is used here.} $(\sigma^{(m)}_{\rho,\mu})^*x_{(m)}^{\nu}=\rho\delta_{\mu\nu}$ for $n+1\le\nu\le2^mn$.
		
  		\end{enumerate}
\end{lem}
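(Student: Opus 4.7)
The plan is to prove both parts by induction on $m$: part (i) by unwinding the iterated tangent functor, and part (ii) by inverting the polynomial expression furnished by (i).

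For (i), the base case $m=1$ is immediate. If $f$ has local representation $\tilde f(x^1,\dots,x^n)$, then $f_*^{(1)}$ has two scalar components, namely $\tilde f$ and $\sum_{i=1}^n x_{(1)}^{n+i}\,\partial_i\tilde f$, both polynomials in $x_{(1)}^{n+1},\dots,x_{(1)}^{2n}$ whose coefficients are partials of $\tilde f$ of order $\le 1$. For the inductive step, write $g=f_*^{(m),\lambda}$ and apply the tangent functor. In the chart $(U^{(m+1)};x_{(m+1)}^1,\dots,x_{(m+1)}^{2^{m+1}n})$, the two components of $g_*$ are $g$ (lifted through the projection) and the fibre-linear form
\[
\sum_{\nu=1}^{2^m n} x_{(m+1)}^{2^m n+\nu}\,\frac{\partial g}{\partial x_{(m)}^\nu}.
\]
By inductive hypothesis, $g$ is a universal polynomial in $x_{(m)}^{n+1},\dots,x_{(m)}^{2^m n}$ whose coefficients are universal polynomial expressions in partials of $\tilde f$ of order $\le m$. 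Differentiating with respect to a base coordinate $x_{(m)}^\nu$ with $\nu\le n$ leaves the polynomial in the fibre variables unchanged and raises the order of each coefficient partial by one; differentiating with respect to a fibre coordinate $\nu>n$ reduces the polynomial degree in that variable and leaves the coefficient partials untouched. Multiplying by $x_{(m+1)}^{2^m n+\nu}$ then reintroduces one degree in the new fibre variables. Either way the result is a universal polynomial of the required form in the new chart, closing the induction.

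For (ii), (i) provides an expansion
\[
f_*^{(m),\lambda}=\sum_{|I|\le m}P_I^\lambda\bigl(x_{(m)}^{n+1},\dots,x_{(m)}^{2^m n}\bigr)\,\partial^I\tilde f,
\]
with universal polynomials $P_I^\lambda$ independent of $f$. Pulling back by $\sigma^{(m)}_{\rho,\mu}$ sets every fibre coordinate to zero except the $\mu$-th, which is set to $\rho$, producing
$(\sigma^{(m)}_{\rho,\mu})^*f_*^{(m),\lambda}=\sum_{|I|\le m}P_I^\lambda(\rho e_\mu)\,\partial^I\tilde f$, a linear combination of the partials of $\tilde f$ whose coefficients are universal constants depending only on $\rho$, $m$, $n$ and the multi-index $I$. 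The task is then to produce, for each multi-index $I$ with $|I|\le m$, a universal linear combination over $(\lambda,\mu)$ of these pullbacks that isolates $\partial^I\tilde f$.

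The main obstacle is precisely this inversion. Since the $P_I^\lambda$ are explicit combinatorial objects coming from the iterated application of the tangent functor, the verification is in principle mechanical: one tracks which components $f_*^{(m),\lambda}$ carry which monomials in the fibre variables and exploits the triangular structure induced by the inductive definition, namely that lower-order partials are introduced at lower levels of the iterated tangent tower, while higher-order partials appear only in higher components paired with products of fibre variables. The delicate point is confirming that specializations at the single coordinate vectors $\rho e_\mu$, combined across all components $\lambda$, suffice to separate every partial $\partial^I\tilde f$ with $|I|\le m$; the proof reduces to a careful bookkeeping of the iterated tangent construction, guided by the precise indexing of components matched with coordinates that (i) produces inductively.
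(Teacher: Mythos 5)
Your argument for part (i) is correct and follows the same route as the paper: reduce to $m=1$, where $f_*=(f,df)$, and close the induction by applying the tangent functor once more.

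For part (ii) there is a genuine gap, and you have in effect flagged it yourself: the ``careful bookkeeping'' needed to check that the specializations at $\rho e_\mu$ separate all partials $\partial^I\tilde f$ with $|I|\le m$ is never carried out, and if you do carry it out it fails already at $m=2$. Writing $v^i=x_{(2)}^{n+i}$, $w^j=x_{(2)}^{2n+j}$, $u^j=x_{(2)}^{3n+j}$ ($1\le i,j\le n$), a direct computation gives
\[
f_*^{(2),4}\;=\;\sum_{i,j=1}^{n} v^i\,w^j\,\partial_i\partial_j\tilde f\;+\;\sum_{j=1}^{n} u^j\,\partial_j\tilde f\;,
\]
and $\partial_i\partial_j\tilde f$ occurs nowhere else among $f_*^{(2),1},\dots,f_*^{(2),4}$. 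Every second-order partial is therefore carried only by a product $v^iw^j$ of two \emph{distinct} fibre coordinates, and any section $\sigma^{(2)}_{\rho,\mu}$, which leaves exactly one fibre coordinate nonzero, annihilates every such product. The pullbacks $(\sigma^{(2)}_{\rho,\mu})^*f_*^{(2),\lambda}$ consequently contain only $\tilde f$ and its first partials, so no universal linear combination of them can recover $\partial_i\partial_j\tilde f$. This pattern is systematic: the order-$m$ partials enter $f_*^{(m)}$ multiplied by products of $m$ distinct fibre coordinates, one from each layer of the iterated tangent tower, so single-coordinate specializations lose all information about them once $m\ge2$.

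The reduction the paper's terse proof is actually invoking when it says the lemma ``boils down to the case $m=1$'' by induction is different from yours: apply the $m=1$ recovery to $h:=f_*^{(m-1)}\colon T^{(m-1)}M\to\E^{2^{m-1}}$, recovering $h$ and its first partials from the values of $h_*=f_*^{(m)}$ along sections of $\pi\colon T^{(m)}M\to T^{(m-1)}M$, and then iterate down the tower $T^{(m)}M\to T^{(m-1)}M\to\cdots\to M$. The composite of the single-coordinate sections used at the successive steps is a section of $T^{(m)}M\to M$ with up to $m$ nonzero fibre coordinates, one per layer, and this is exactly what survives the coefficient $v^iw^j$ above. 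To repair (ii) you must either set up this nested induction explicitly, or replace the single-coordinate sections in your bookkeeping by these composite sections; the raw claim you are trying to verify, with genuine one-coordinate sections, does not hold for $m\ge2$.
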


\begin{proof}
	By using induction on $m$, the result clearly boils down to the case $m=1$. But, in this case, the statement follows because $f_*\equiv(f,df):TM\to T\E\equiv\E^2$. 
\end{proof}

By using the supremum on $\Omega$ instead of the maximum, the definition of $\|\ \|_{C^m,\Omega,g}$ can be extended to any non-compact $n$-submanifold $\Omega\subset M$ (including $\Omega=M$), with possible infinite values. The tensors on $\Omega$ with finite norm $\|\ \|_{C^m,\Omega,g}$ are said to be \emph{uniformly $C^m$}, or $C^m_b$. For a given type, they form a Banach space, and the corresponding projective limit as $m\to\infty$ is a Fr\'echet space, whose elements are said to be \emph{uniformly $C^\infty$}, or $C^\infty_b$ (see e.g.\ \cite[Definition~2.7]{Roe1988I} or \cite[Definition~3.15]{Schick1996}). In particular, this gives rise to the Fr\'echet spaces $C^\infty_b(\Omega)$ and $C^\infty_b(\Omega,\E)$ when $\R$-valued and $\E$-valued $C^\infty_b$ functions are considered.

Let $N$ be another Riemannian manifold. Recall that a $C^1$ map $\phi\colon M \to N$ is called a ($\lambda$-) \emph{quasi-isometry}, or ($\lambda$-) \emph{quasi-isometric}, if there is some $\lambda \geq 1$ such that $\frac{1}{\lambda}\,|\xi|\le|\phi_*(\xi)| \leq \lambda \,|\xi|$ for every $\xi\in TM$; in particular, $\phi$ is an immersion. To define higher order quasi-isometries, let $T^{\le r}M=\{\,\xi\in TM\mid|\xi|\le r\,\}$ for each $r>0$. If $M$ has no boundary, then $T^{\le r}M$ is a manifold with boundary; otherwise, it is a manifold with corners. Also, define $T^{(m),\le r}M$ by induction on $m\in\Z^+$, setting $T^{(1),\le r}M=T^{\le r}M$ and $T^{(m),\le r}M=T^{\le r}T^{(m-1),\le r}M$.  It is said that $\phi:M\to N$ is a ($\lambda$-) \emph{quasi-isometry of order $m\in\N$}, or a ($\lambda$-) \emph{quasi-isometric map of order $m$}, if it is $C^{m+1}$ and $\phi_*^{(m)}:T^{(m),\le1}M\to T^{(m)}N$ is a ($\lambda$-) quasi-isometry. If $\phi$ is a quasi-isometry of order $m$ for all $m\in\N$, then it is called a \emph{quasi-isometry of order $\infty$}. If there is a quasi-isometric diffeomorphism $M\to N$ of order $m\in\N\cup\{\infty\}$, then $M$ and $N$ are said to be \emph{quasi-isometric with order $m$}. The property of being a quasi-isometry of order $m$ is preserved by the operations of composition of maps and inversion of diffeomorphisms \cite[Proposition~3.9]{AlvarezBarralCandel2016}, and therefore it induces an equivalence relation between Riemannian manifolds.

For $m\in\N$, a partial map $\phi:M\rightarrowtail N$ is called a \emph{$C^m$ local diffeomorphism}\footnote{The term ``$C^m$ local diffeomorfism'' ($m\ge1$) is also used in the standard sense, referring to any $C^m$ map $M\to N$ whose tangent map is an isomorphism at every point of $M$. The context will always clarify this ambiguity.} if $\dom\phi$ and $\im\phi$ are open in $M$ and $N$, respectively, and $\phi:\dom\phi\to\im\phi$ is a $C^m$ diffeomorphism. If moreover $\phi(x)=y$ for distinguished points, $x\in\dom\phi$ and $y\in\im\phi$, then it is said that $\phi:(M,x)\rightarrowtail(N,y)$ is a \emph{pointed $C^m$ local diffeomorphism}. For $m\in\N$, $R>0$ and $\lambda\ge1$, a $C^{m+1}$ pointed local diffeomorphism $\phi \colon (M,x) \rightarrowtail (N,y)$ is called an \emph{$(m,R,\lambda)$-pointed local quasi-isometry}, or a \emph{local quasi-isometry} of \emph{type} $(m,R,\lambda)$, if the restriction $\phi_*^{(m)}:\Omega^{(m)}\to T^{(m)}N$ is a $\lambda$-quasi-isometry for some compact domain $\Omega^{(m)}\subset\dom\phi_*^{(m)}$ with $B_M^{(m)}(x,R)\subset\Omega^{(m)}$ \cite[Definition~4.2]{AlvarezBarralCandel2016}.

\section{(Partial) quasi-equivalences}\label{s: (partial) q.-e.}

Let $M$ and $N$ be Riemannian $n$-manifolds, let $f\in C^\infty(M,\E)$ and $h\in C^\infty(N,\E)$, and let $x\in M$ and $y\in N$. Recall from Section~\ref{s: intro} the concepts of an equivalence $(M,f)\to(N,h)$, and a pointed equivalence $(M,f,x)\to(N,h,y)$. Observe that $\|f_*^{(m)}\|_{\Omega^{(m)}}$ makes sense for any $n$-submanifold $\Omega^{(m)}\subset T^{(m)}M$ because we consider $f_*^{(m)}:T^{(m)}M\to T^{(m)}\E\equiv\E^{2^m}$, with values in a separable Hilbert space. Note also that $(\phi^*h)_*^{(m)}=h_*^{(m)}\circ\phi_*^{(m)}$ for any $C^m$ map $\phi:M\to N$.

\begin{defn}\label{d: quasi-equivalence of order m}
	Let $\lambda\ge1$ and $\epsilon\ge0$, and let $\phi:M\to N$ be a $C^1$ map. It is said that $\phi:(M,f)\to(N,h)$ is a ($(\lambda,\epsilon)$-) \emph{quasi-equivalence of order $m\in\N$} if it is $C^{m+1}$, $\phi_*^{(m)}:T^{(m),\le1}M\to T^{(m)}N$ is a ($\lambda$-) quasi-isometry, and $\|f_*^{(m)}-(\phi^*h)_*^{(m)}\|_{T^{(m)}M}\le\epsilon$. If moreover distinguished points $x$ and $y$ are preserved, then $\phi:(M,f,x)\to(N,h,y)$ is called a \emph{pointed quasi-equivalence of order $m$}.  If there is a quasi-equivalence $(M,f)\to(N,h)$ (respectively, $(M,f,x)\to(N,h,y)$), then $(M,f)$ and $(N,h)$ (respectively, $(M,f,x)$ and $(N,h,y)$) are called \emph{quasi-equivalent}.
\end{defn}

\begin{rem}\label{r: q.-e. of order m}
  	\begin{enumerate}[(i)]
	
  		\item\label{i: q-e of order m are q-e of order m-1} Any $(\lambda,\epsilon)$-quasi-equivalence of order $m\ge1$ is a $(\lambda,\epsilon)$-quasi-equivalence of order $m-1$.
		
  		\item\label{i: phi_*^(m') is a lambda-q.-e. of order m-m'} For integers $0\le m'\le m$, if $\phi$ is a $(\lambda,\epsilon)$-quasi-equivalence of order $m$, then $\phi_*^{(m')}$ is a $(\lambda,\epsilon)$-quasi-equivalence of order $m-m'$.
		
  \end{enumerate}
\end{rem}

For a submanifold $\Omega\subset M$ and $f\in C^\infty(M,\E)$, the notation $(\Omega,f)$ is used for $(\Omega,f|_\Omega)$.

\begin{prop}\label{p: q.-e. of order m}
  	The following properties hold for any $m\in\N$, $\lambda,\mu\ge1$ and $\epsilon,\delta\ge0$:
  \begin{enumerate}[{\rm(}i\/{\rm)}]
	
  \item\label{i: composites of q.-e. of order m} There is some $\nu\ge1$, depending on $m$, $\lambda$ and $\mu$, such that, if $\phi:(M,f)\to(N,h)$ is a $(\lambda,\epsilon)$-quasi-equivalence and $\psi:(N,h)\to(L,u)$ a $(\mu,\delta)$-quasi-equivalence, both of them of order $m$, then $\psi\circ\phi:(M,f)\to(L,u)$ is a $(\nu,\epsilon+\delta)$-quasi-equivalence of order $m$.
		
  \item\label{i: inverses of q.-e. of order m} There are some $\nu'\ge1$, depending on $m$ and $\lambda$, such that, if $\phi:(M,f)\to(N,h)$ is a $(\lambda,\epsilon)$-quasi-equivalence of order $m$ and a diffeomorphism, then $\phi^{-1}:(N,h)\to(M,f)$ is a $(\nu',\epsilon)$-quasi-equivalence of order $m$.
		
  \end{enumerate}
\end{prop}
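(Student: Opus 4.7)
The plan is to split each claim into a quasi-isometry assertion, which is already established by \cite[Proposition~3.9]{AlvarezBarralCandel-universal}, and a tensor-comparison assertion, which will be verified by direct estimation using the identity $(\phi^*v)_*^{(m)}=v_*^{(m)}\circ\phi_*^{(m)}$ recorded at the start of this section, together with the functoriality $(\psi\circ\phi)^*=\phi^*\circ\psi^*$.

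For part~(i), the quasi-isometry-of-order-$m$ conclusion for $\psi\circ\phi$, with constant $\nu=\nu(m,\lambda,\mu)$, is precisely the composition statement of the cited Proposition~3.9. For the tensor part, I would decompose
\[
f_*^{(m)}-((\psi\circ\phi)^*u)_*^{(m)}=\bigl(f_*^{(m)}-(\phi^*h)_*^{(m)}\bigr)+\bigl(h_*^{(m)}-(\psi^*u)_*^{(m)}\bigr)\circ\phi_*^{(m)}
\]
and take sup-norms on $T^{(m)}M$. The first summand contributes at most $\epsilon$ by the hypothesis on $\phi$, and the second is dominated by $\|h_*^{(m)}-(\psi^*u)_*^{(m)}\|_{T^{(m)}N}\le\delta$ by the hypothesis on $\psi$, since $\phi_*^{(m)}$ takes values in $T^{(m)}N$. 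Summing gives the desired $\epsilon+\delta$.

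For part~(ii), because $\phi$ is a diffeomorphism, so is $\phi_*^{(m)}\colon T^{(m)}M\to T^{(m)}N$, and the inversion statement for quasi-isometries of order $m$---with constant $\nu'=\nu'(m,\lambda)$---is again supplied by the cited Proposition~3.9. For the tensor inequality, the substitution $\xi=\phi_*^{(m)}(\eta)$ yields
\[
\|h_*^{(m)}-((\phi^{-1})^*f)_*^{(m)}\|_{T^{(m)}N}=\|(\phi^*h)_*^{(m)}-f_*^{(m)}\|_{T^{(m)}M}\le\epsilon,
\]
preserving the same constant $\epsilon$. The one delicate bookkeeping point, which is exactly what Proposition~3.9 of \cite{AlvarezBarralCandel-universal} is designed to handle, is that $\phi_*^{(m)}$ maps the unit ball $T^{(m),\le 1}M$ into $T^{(m),\le\lambda}N$ rather than into the unit ball of $T^{(m)}N$; one must therefore enlarge the compact domain on which the composite (respectively, the inverse) is controlled as a quasi-isometry before the quasi-isometric bounds can be concatenated. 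Once that citation is invoked, everything else reduces to triangle-inequality estimates in the Hilbert space $\E^{2^m}$.
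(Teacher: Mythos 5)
Your proposal is correct and follows exactly the same route as the paper's own proof: delegate the quasi-isometry-of-order-$m$ assertions to \cite[Proposition~3.9]{AlvarezBarralCandel-universal}, then handle the $\E$-valued tensor bounds by the triangle inequality (using $(\phi^*h)_*^{(m)}=h_*^{(m)}\circ\phi_*^{(m)}$) for part~(i) and by the substitution $\zeta=\phi_*^{(m)}(\eta)$ for part~(ii). The only slight imprecision is describing $T^{(m),\le1}M$ as ``the unit ball of $T^{(m)}M$'' --- it is the iterated construction $T^{\le1}T^{(m-1),\le1}M$, not the metric unit ball --- but this is harmless since the relevant domain bookkeeping is indeed absorbed by the cited proposition and plays no role in your tensor estimates.
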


\begin{proof}
  	By \cite[Proposition~3.9]{AlvarezBarralCandel2016}, we only have to check the conditions on the $\E$-valued functions. Thus~\eqref{i: composites of q.-e. of order m} follows because, for each $\xi\in T^{(m)}M$, we have
		\begin{multline*}
			\left\|f_*^{(m)}(\xi)-((\psi\circ\phi)^*u)_*^{(m)}(\xi)\right\|\\
			\le\left\|f_*^{(m)}(\xi)-(\phi^*h)_*^{(m)}(\xi)\right\|
			+\left\|h_*^{(m)}\left(\phi_*^{(m)}(\xi)\right)-(\psi^*u)_*^{(m)}\left(\phi_*^{(m)}(\xi)\right)\right\|
			\le\epsilon+\delta\;.
		\end{multline*}
	Similarly,~\eqref{i: inverses of q.-e. of order m} follows because, for each $\zeta\in T^{(m)}N$,
		\[
			\left\|h_*^{(m)}(\zeta)-((\phi^{-1})^*f)_*^{(m)}(\zeta)\right\|
			=\left\|(\phi^*h)_*^{(m)}\left((\phi^{-1})_*^{(m)}(\zeta)\right)
			-f_*^{(m)}\left((\phi^{-1})_*^{(m)}(\zeta)\right)\right\|
			\le\epsilon\;.\qed
		\]
\renewcommand{\qed}{}
\end{proof}

\begin{cor}\label{c: being q.-e. is an equiv rel}
  ``Being quasi-equivalent with order $m$'' is an equivalence relation on the sets of pairs $(M,f)$ and triples $(M,f,x)$.
\end{cor}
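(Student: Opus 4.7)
The plan is to verify reflexivity, symmetry, and transitivity directly from Proposition~\ref{p: q.-e. of order m}, with the understanding that—as in the notion of ``quasi-isometric with order $m$''—the relation is witnessed by quasi-equivalences that are diffeomorphisms. (If one instead reads the definition literally, without the diffeomorphism requirement, then one should first observe that the definition of ``quasi-equivalent'' in Definition~\ref{d: quasi-equivalence of order m} has to be symmetrized by demanding a quasi-equivalence in each direction, and the argument below adapts verbatim.)

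For reflexivity, the identity map $\id_M$ is an isometry, so $(\id_M)_*^{(m)}$ is a $1$-quasi-isometry, and since $\id_M^*f=f$ we have $f_*^{(m)}-(\id_M^*f)_*^{(m)}\equiv 0$. Hence $\id_M:(M,f)\to(M,f)$ (respectively $\id_M:(M,f,x)\to(M,f,x)$) is a $(1,0)$-quasi-equivalence of order $m$ for every $m\in\N$. Transitivity is immediate from Proposition~\ref{p: q.-e. of order m}\eqref{i: composites of q.-e. of order m}: if $\phi:(M,f)\to(N,h)$ is a $(\lambda,\epsilon)$-quasi-equivalence and $\psi:(N,h)\to(L,u)$ is a $(\mu,\delta)$-quasi-equivalence, both of order $m$ and both diffeomorphisms, then $\psi\circ\phi:(M,f)\to(L,u)$ is a $(\nu,\epsilon+\delta)$-quasi-equivalence of order $m$ and a diffeomorphism; the pointed version is the same since composition preserves base points.

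Symmetry is where the subtlety lies, and this is the step I expect to be the main obstacle to a purely formal argument. It is settled by Proposition~\ref{p: q.-e. of order m}\eqref{i: inverses of q.-e. of order m}: if $\phi:(M,f)\to(N,h)$ is a $(\lambda,\epsilon)$-quasi-equivalence of order $m$ and a diffeomorphism, then $\phi^{-1}:(N,h)\to(M,f)$ is a $(\nu',\epsilon)$-quasi-equivalence of order $m$ (and again a diffeomorphism). Clearly $\phi^{-1}$ also preserves the distinguished points in the pointed case. Thus the relation is symmetric. Combining the three properties, ``being quasi-equivalent with order $m$'' is an equivalence relation on pairs $(M,f)$ and on triples $(M,f,x)$.
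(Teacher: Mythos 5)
Your proof is correct and takes the same (and indeed the only natural) approach: the corollary is stated in the paper with no explicit proof, being an immediate consequence of Proposition~\ref{p: q.-e. of order m}, and you have simply spelled out reflexivity via $\id_M$, transitivity via part~\eqref{i: composites of q.-e. of order m}, and symmetry via part~\eqref{i: inverses of q.-e. of order m}. Your remark about the diffeomorphism requirement is a fair catch: Definition~\ref{d: quasi-equivalence of order m} literally declares $(M,f)$ and $(N,h)$ ``quasi-equivalent'' whenever there exists a quasi-equivalence in one direction, whereas the analogous phrase ``quasi-isometric with order $m$'' in Section~\ref{s: prelim} is defined explicitly via a quasi-isometric \emph{diffeomorphism}; the corollary's phrasing ``quasi-equivalent with order $m$'' clearly mirrors the latter convention, so the diffeomorphism reading you adopt is the one intended, and Proposition~\ref{p: q.-e. of order m}\eqref{i: inverses of q.-e. of order m}—which assumes $\phi$ is a diffeomorphism—is precisely what makes symmetry go through.
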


Now, suppose that $M$ and $N$ are connected, complete and without boundary.

\begin{defn}\label{d: (m,R,lambda,epsilon)-pointed local quasi-equivalence}
  	Fix $m\in\N$, $R>0$, $\lambda\ge1$ and $\epsilon\ge0$. Let $\phi \colon (M,x) \rightarrowtail (N,y)$ be a $C^{m+1}$ pointed local diffeomorphism, and let $f\in C^\infty(M,\E)$ and $h\in C^\infty(N,\E)$. It is said that $\phi \colon (M,f,x) \rightarrowtail (N,h,y)$ is an \emph{$(m,R,\lambda,\epsilon)$-pointed local quasi-equivalence}, or a \emph{local quasi-equivalence} of \emph{type} $(m,R,\lambda,\epsilon)$, if there is some compact domain $\Omega^{(m)}\subset\dom\phi_*^{(m)}$ such that $B_M^{(m)}(x,R)\subset\Omega^{(m)}$ and $\phi_*^{(m)}:(\Omega^{(m)},f_*^{(m)})\to(T^{(m)}N,h_*^{(m)})$ is a $(\lambda,\epsilon)$-quasi-equivalence.
\end{defn}

\begin{rem}\label{r: (m,R,lambda,epsilon)-...}
  \begin{enumerate}[(i)]
	
  \item\label{i: ... (m,R,lambda,epsilon) is ... (m',R',lambda',epsilon')} Any pointed local quasi-equivalence $(M,f,x)\rightarrowtail(N,h,y)$ of type $(m,R,\lambda,\epsilon)$ is also of type $(m',R',\lambda',\epsilon')$ for $0\leq m'\leq m$, $0<R'<R$, $\lambda'>\lambda$ and $\epsilon'>\epsilon$.
		
  \item\label{i: ... (m,R,lambda,epsilon) if and only if ... (m-m',R,lambda,epsilon)} Consider integers $0\leq m' \leq m$, any pointed $C^{m+1}$ local diffeomorphism $\phi:(M,x)\rightarrowtail(N,y)$, and any $f\in C^\infty(M,\E)$ and $h\in C^\infty(N,\E)$. Then $\phi:(M,f,x)\rightarrowtail(N,h,y)$ is a pointed local quasi-equivalence of type $(m,R,\lambda,\epsilon)$ if and only if $\phi_*^{(m')}:(T^{(m')}M,f_*^{(m')},x)\rightarrowtail(T^{(m')}N,h_*^{(m')},y)$ is a pointed local quasi-equivalence of type $(m-m',R,\lambda,\epsilon)$.
		
  \item\label{i: C^infty approximation of (m,R,lambda,epsilon)-...} If there is an $(m,R,\lambda,\epsilon)$-pointed local quasi-equivalence $(M,f,x)\rightarrowtail(N,h,y)$, then, for all $R'<R$, $\lambda'>\lambda$ and $\epsilon'>\epsilon$, there is a $C^\infty$ $(m,R',\lambda',\epsilon')$-pointed local quasi-equivalence $(M,f,x)\rightarrowtail(N,h,y)$ by \cite[Theorem~2.7]{Hirsch1976}.
		
  \end{enumerate}
\end{rem}

\begin{lem}\label{l: composition and inversion of pointed local quasi-equivalences}
  The following properties hold:
  \begin{enumerate}[{\rm(}i\/{\rm)}]
		
  \item\label{i: composition} If $\phi: (M,f,x) \rightarrowtail (N,h,y)$ and $\psi: (N,h,y)\rightarrowtail(L,u,z)$ are pointed local quasi-equivalences of types $(m,R,\lambda,\epsilon)$ and $(m,\lambda R,\lambda',\epsilon')$, respectively, then $\psi\circ\phi:(M,f,x)\rightarrowtail (L,u,z)$ is an $(m,R,\lambda\lambda',\epsilon+\epsilon')$-pointed local quasi-equivalence.
			
  \item\label{i: inversion} If $\phi: (M,f,x) \rightarrowtail (N,h,y)$ is an $(m,\lambda R,\lambda,\epsilon)$-pointed local quasi-isometry, then $\phi^{-1}:(N,h,y)\rightarrowtail(M,f,x)$ is an $(m,R,\lambda,\epsilon)$-pointed local quasi-isometry.
			
  \end{enumerate}
\end{lem}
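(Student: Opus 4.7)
For part~\eqref{i: composition}, let $\Omega_\phi^{(m)}$ and $\Omega_\psi^{(m)}$ be the compact domains witnessing the hypotheses on $\phi$ and $\psi$. The $\lambda$-quasi-isometry condition bounds the length of any curve in $\Omega_\phi^{(m)}$ by $\lambda$ times its $\phi_*^{(m)}$-image length, and, applied to minimizing geodesics from $x$ (which stay inside $B_M^{(m)}(x,R) \subset \Omega_\phi^{(m)}$), it yields $\phi_*^{(m)}(B_M^{(m)}(x,R)) \subset B_N^{(m)}(y,\lambda R) \subset \Omega_\psi^{(m)}$. I would therefore pick a compact domain $\Omega^{(m)}$ with $B_M^{(m)}(x,R) \subset \Omega^{(m)} \subset \Omega_\phi^{(m)} \cap (\phi_*^{(m)})^{-1}(\Omega_\psi^{(m)})$, on which $(\psi\circ\phi)_*^{(m)} = \psi_*^{(m)} \circ \phi_*^{(m)}$ is a $\lambda\lambda'$-quasi-isometry by direct composition. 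For the $\E$-valued side I would copy the estimate from Proposition~\ref{p: q.-e. of order m}\eqref{i: composites of q.-e. of order m}: writing
\[
\bigl\|f_*^{(m)} - ((\psi\circ\phi)^*u)_*^{(m)}\bigr\| \le \bigl\|f_*^{(m)} - (\phi^*h)_*^{(m)}\bigr\| + \bigl\|(h_*^{(m)} - (\psi^*u)_*^{(m)}) \circ \phi_*^{(m)}\bigr\|,
\]
the first term is $\le\epsilon$ on $\Omega^{(m)} \subset \Omega_\phi^{(m)}$ and the second is $\le\epsilon'$ because $\phi_*^{(m)}(\Omega^{(m)}) \subset \Omega_\psi^{(m)}$.

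For part~\eqref{i: inversion}, let $\Omega^{(m)}$ be the compact domain for $\phi$, which by hypothesis contains $B_M^{(m)}(x,\lambda R)$. The key step is to establish
\[
B_N^{(m)}(y,R) \subset \phi_*^{(m)}(B_M^{(m)}(x,\lambda R)),
\]
which I would prove by path-lifting. Given $\eta \in B_N^{(m)}(y,R)$, choose a path $\gamma$ in $T^{(m)}N$ from $y$ to $\eta$ of length strictly less than $R$, and lift it through the inverse of $\phi_*^{(m)}$ starting at $x$. The maximal-lift subset of $[0,1]$ is open because $\phi$ is a diffeomorphism onto its image, so $\phi_*^{(m)}$ is a local diffeomorphism at every point of $\Omega^{(m)}$; it is closed because, as long as the lifted path $\tilde\gamma$ stays in $\Omega^{(m)}$, the inverse is $\lambda$-Lipschitz on $\phi_*^{(m)}(\Omega^{(m)})$ and hence $\length(\tilde\gamma) \le \lambda\length(\gamma) < \lambda R$, keeping $\tilde\gamma$ in $B_M^{(m)}(x,\lambda R) \subset \Omega^{(m)}$, while compactness of $\Omega^{(m)}$ supplies the needed limit point. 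So $\tilde\gamma$ extends to all of $[0,1]$ and $\eta \in \phi_*^{(m)}(B_M^{(m)}(x,\lambda R))$.

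With the inclusion in hand, I would take $\tilde\Omega^{(m)}$ to be a compact domain in $T^{(m)}N$ sandwiched between $B_N^{(m)}(y,R)$ and the open set $\phi_*^{(m)}(\Int\Omega^{(m)})$, obtained by smoothing the boundary of a closed neighborhood of $\overline{B_N^{(m)}(y,R)}$ inside that open set. Every $\eta \in \tilde\Omega^{(m)}$ has a unique preimage $\xi \in \Omega^{(m)}$ by injectivity of $\phi$, and $(\phi^{-1})_*^{(m)} = (\phi_*^{(m)})^{-1}$ is a $\lambda$-quasi-isometry on $\tilde\Omega^{(m)}$ simply because $\phi_*^{(m)}$ is one on $\Omega^{(m)}$. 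The $\E$-valued bound is then a direct transport of the one for $\phi$:
\begin{align*}
\bigl\|h_*^{(m)}(\eta) - ((\phi^{-1})^*f)_*^{(m)}(\eta)\bigr\|
&= \bigl\|h_*^{(m)}(\phi_*^{(m)}(\xi)) - f_*^{(m)}(\xi)\bigr\| \\
&= \bigl\|(\phi^*h)_*^{(m)}(\xi) - f_*^{(m)}(\xi)\bigr\| \le \epsilon,
\end{align*}
using the identity $(\phi^*h)_*^{(m)} = h_*^{(m)}\circ\phi_*^{(m)}$. The real technical obstacle is the path-lifting step together with producing $\tilde\Omega^{(m)}$ as a genuine compact domain; the $\E$-valued estimates are formal consequences of the global case in Proposition~\ref{p: q.-e. of order m} once these domains are in place.
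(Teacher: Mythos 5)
Your proposal follows essentially the same approach as the paper: establish the domain-nesting inclusions, compose or invert the $\lambda$-quasi-isometries on the nested domain, and apply the triangle inequality for the $\E$-valued estimate. The only real difference in presentation is that the paper cites the companion paper (Lemma~4.3 and Remark~2 there) for the geometric nesting steps, whereas you supply the curve-length and path-lifting arguments explicitly; in part~\eqref{i: inversion} the paper also takes the slightly more direct choice $\Omega^{\prime(m)}:=\phi_*^{(m)}(\Omega^{(m)})$, which is automatically a compact domain, in place of your sandwiched domain.
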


\begin{proof}
  	To prove~\eqref{i: composition}, take compact domains, $\Omega^{(m)}\subset T^{(m)}M$ and $\Omega^{\prime(m)}\subset T^{(m)}N$, such that $B_M^{(m)}(x,R)\subset\Omega^{(m)}$, $B_N^{(m)}(x,\lambda R)\subset\Omega^{\prime(m)}$, $\phi_*^{(m)}:(\Omega^{(m)},f_*^{(m)})\to(T^{(m)}N,h_*^{(m)})$ is a $(\lambda,\epsilon)$-quasi-equivalence, and $\psi_*^{(m)}:(\Omega^{\prime(m)},h_*^{(m)})\to(T^{(m)}L,u_*^{(m)})$ is a $(\lambda',\epsilon')$-quasi-equivalence. According to the proof of \cite[Lemma~4.3-(i)]{AlvarezBarralCandel2016}, there is a compact domain $\Omega_0^{(m)}\subset T^{(m)}M$ such that $B_M^{(m)}(x,R)\subset\Omega_0^{(m)}$ and $\phi_*^{(m)}(\Omega_0^{(m)})\subset\Omega^{\prime(m)}$. Then $(\psi\circ\phi)_*^{(m)}:\Omega_0^{(m)}\to T^{(m)}L$ is a $\lambda\lambda'$-quasi-isometry by \cite[Remark~2-(v)]{AlvarezBarralCandel2016}. Moreover, for each $\xi\in\Omega_0^{(m)}$,
		\begin{multline*}
			\left\|f_*^{(m)}(\xi)-((\psi\circ\phi)^*u)_*^{(m)}(\xi)\right\|\\
			\le\left\|f_*^{(m)}(\xi)-(\phi^*h)_*^{(m)}(\xi)\right\|
			+\left\|h_*^{(m)}\left(\phi_*^{(m)}(\xi)\right)-(\psi^*u)_*^{(m)}\left(\phi_*^{(m)}(\xi)\right)\right\|
			\le\epsilon+\epsilon'\;.
		\end{multline*}
	So $\psi\circ\phi:(M,f,x)\rightarrowtail (L,u,z)$ is an $(m,R,\lambda\lambda',\epsilon+\epsilon')$-pointed local quasi-equivalence.
	
	To prove~\eqref{i: inversion}, let $\Omega^{(m)}\subset T^{(m)}M$ be a compact domain such that $B_M^{(m)}(x,R)\subset\Omega^{(m)}$, and $\phi_*^{(m)}:(\Omega^{(m)},f_*^{(m)})\to(T^{(m)}N,h_*^{(m)})$ is a $(\lambda,\epsilon)$-quasi-equivalence. According to the proof of \cite[Lemma~4.3-(ii)]{AlvarezBarralCandel2016}, the compact domain $\Omega^{\prime(m)}:=\phi_*^{(m)}(\Omega^{(m)})\subset T^{(m)}N$ contains $B_N^{(m)}(y,R)$. Then $(\phi^{-1})_*^{(m)}=(\phi_*^{(m)})^{-1}:\Omega^{\prime(m)}\to T^{(m)}M$ is a $\lambda$-quasi-isometry by \cite[Remark~2-(vi)]{AlvarezBarralCandel2016}. Moreover, for each $\xi\in\Omega^{\prime(m)}$,
		\[
			\left\|h_*^{(m)}(\xi)-((\phi^{-1})^*f)_*^{(m)}(\xi)\right\|
			\le\left\|(\phi^*h)_*^{(m)}\left((\phi^{-1})_*^{(m)}(\xi)\right)-f_*^{(m)}\left((\phi^{-1})_*^{(m)}(\xi)\right)\right\|
			\le\epsilon\;.
		\]
	So $\phi^{-1}:(N,h,y)\rightarrowtail (M,f,x)$ is an $(m,R,\lambda,\epsilon)$-pointed local quasi-equivalence.
\end{proof}

\section{The $C^\infty$ topology on $\widehat\MM_*(n)$}\label{s: C^infty topology}

\begin{defn}\label{d: widehat U^m_R,r}
  For $m\in\N$ and $R,r>0$, let $\widehat U^m_{R,r}$ be the set of pairs
  $([M,f,x],[N,h,y])\in\widehat\MM_*(n)\times\widehat\MM_*(n)$ such that there is some
  $(m,R,\lambda,\epsilon)$-pointed local quasi-equivalence $(M,f,x)\rightarrowtail
  (N,h,y)$ for some $\lambda\in[1,e^r)$ and $\epsilon\in(0,r)$.
\end{defn}

\begin{prop}\label{p: C^infty uniformity in widehat MM_*(n)}
  	The following properties\footnote{The following standard notation is used for a set $X$ and relations $U,V\subset X\times X$:
		\begin{align*}
			U^{-1}&=\{\,(y,x)\in X\times X\mid(x,y)\in U\,\}\;,\\
			V\circ U&=\{\,(x,z)\in X\times X\mid\exists y\in X\ \text{so that}\ (x,y)\in U\ \text{and}\ (y,z)\in V\,\}\;.
		\end{align*}
	Moreover the diagonal of $X\times X$ is denoted by $\Delta$.} hold for all $m,m'\in\N$ and $R,S,r,s>0$:
		\begin{enumerate}[{\rm(}i\/{\rm)}]
  	
			\item\label{i: ^-1} $(\widehat U^m_{e^rR,r})^{-1} \subset\widehat U^m_{R,r}$.
		
			\item\label{i: cap} $\widehat U^{m_0}_{R_0,r_0}\subset\widehat U^m_{R,r}\cap\widehat U^{m'}_{S,s}$, where $m_0=\max\{m,m'\}$, $R_0=\max\{R,S\}$ and $r_0=\min\{r,s\}$.
		
			\item\label{i: Delta} $\Delta\subset\widehat U^m_{R,r}$.
		
			\item\label{i: circ} $\widehat U^m_{R,r}\circ\widehat U^m_{e^rR,s} \subset\widehat U^m_{R,r+s}$.
		
		\end{enumerate}
\end{prop}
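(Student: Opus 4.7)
The plan is to deduce each of the four properties as a routine consequence of the corresponding fact about pointed local quasi-equivalences established in Section~\ref{s: (partial) q.-e.}, carefully tracking how the four parameters $(m, R, \lambda, \epsilon)$ propagate.

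For the diagonal inclusion, I would take $\phi = \id_M$ as an $(m, R, 1, 0)$-pointed local quasi-equivalence of $(M, f, x)$ to itself on any compact neighborhood of the relevant ball; the bounds $1 \in [1, e^r)$ and $0 < r$ place the witness in the range required by Definition~\ref{d: widehat U^m_R,r}. For the intersection property, a single $(m_0, R_0, \lambda, \epsilon)$-pointed local quasi-equivalence with $\lambda < e^{r_0}$ and $\epsilon < r_0$ serves, via the monotonic relaxations in Remark~\ref{r: (m,R,lambda,epsilon)-...}, as a witness for both $\widehat U^m_{R, r}$ and $\widehat U^{m'}_{S, s}$, because $r_0 = \min\{r, s\}$ immediately gives $\lambda < e^r, e^s$ and $\epsilon < r, s$, while $m, m' \le m_0$ and $R, S \le R_0$ give the relaxation on order and radius.

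For the inversion property, given a $(m, e^r R, \lambda, \epsilon)$-pointed local quasi-equivalence $\phi:(N, h, y) \rightarrowtail (M, f, x)$ with $\lambda < e^r$ and $\epsilon < r$, the key observation $\lambda R < e^r R$ lets me regard $\phi$ as of type $(m, \lambda R, \lambda, \epsilon)$ by Remark~\ref{r: (m,R,lambda,epsilon)-...}, after which Lemma~\ref{l: composition and inversion of pointed local quasi-equivalences} immediately produces $\phi^{-1}$ as a $(m, R, \lambda, \epsilon)$-pointed local quasi-equivalence, witnessing $([M, f, x], [N, h, y]) \in \widehat U^m_{R, r}$.

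For the composition property, I would unpack the given element into two pointed local quasi-equivalences whose composition realizes the middle point, one of type $(m, R, \lambda_1, \epsilon_1)$ with $\lambda_1 < e^r, \epsilon_1 < r$ and one of type $(m, e^r R, \lambda_2, \epsilon_2)$ with $\lambda_2 < e^s, \epsilon_2 < s$. Since $\lambda_1 R < e^r R$, Remark~\ref{r: (m,R,lambda,epsilon)-...} lets me view the large-radius witness as being of type $(m, \lambda_1 R, \lambda_2, \epsilon_2)$, which is the exact shape required by the composition clause of Lemma~\ref{l: composition and inversion of pointed local quasi-equivalences}; the composite then has type $(m, R, \lambda_1 \lambda_2, \epsilon_1 + \epsilon_2)$, and the bounds $\lambda_1 \lambda_2 < e^r \cdot e^s = e^{r+s}$ and $\epsilon_1 + \epsilon_2 < r + s$ place the pair in $\widehat U^m_{R, r+s}$. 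This last step is the only real piece of bookkeeping: the factor $e^r$ in the statement is calibrated so that the image of the small ball $B^{(m)}_M(x, R)$ under $\phi_*^{(m)}$ lies in $B^{(m)}_N(y, \lambda_1 R) \subset B^{(m)}_N(y, e^r R)$ and hence inside the compact domain of the second quasi-equivalence, which is precisely what lets the composition lemma be invoked; I anticipate no further obstacle beyond this calibration.
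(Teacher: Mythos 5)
Your proposal does match the paper's (one‑line) proof, which likewise dismisses (\ref{i: cap}) and (\ref{i: Delta}) as elementary and reduces (\ref{i: ^-1}) and (\ref{i: circ}) to Lemma~\ref{l: composition and inversion of pointed local quasi-equivalences}, and all four of your reductions carry the right mathematical content. Two small points deserve attention. For (\ref{i: Delta}), Definition~\ref{d: widehat U^m_R,r} asks for a witness with $\epsilon\in(0,r)$, not $\epsilon=0$; the identity is of type $(m,R,1,0)$ and hence also of type $(m,R,1,\epsilon)$ for any $\epsilon\in(0,r)$ by Remark~\ref{r: (m,R,lambda,epsilon)-...}, so the claim is fine, but you should pick a strictly positive $\epsilon$ rather than assert that $0$ lies in the required range. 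For (\ref{i: circ}), your calculation composes the small‑radius witness first (type $(m,R,\lambda_1,\epsilon_1)$ with $\lambda_1<e^r$) and the large‑radius witness second; this is indeed the only arrangement in which Lemma~\ref{l: composition and inversion of pointed local quasi-equivalences}-(\ref{i: composition}) applies directly and the bookkeeping $\lambda_1 R<e^r R$ works. But note that, read against the footnote's convention $V\circ U=\{(x,z)\mid\exists y:\ (x,y)\in U,\ (y,z)\in V\}$, the statement $\widehat U^m_{R,r}\circ\widehat U^m_{e^rR,s}$ has the \emph{right} factor $\widehat U^m_{e^rR,s}$ (large radius, bound $e^s$) applied \emph{first} and $\widehat U^m_{R,r}$ (small radius, bound $e^r$) second. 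With that order, the composite would only land in $\widehat U^m_{R/\lambda_1,\,r+s}$ and the inclusion into $\widehat U^m_{R,r+s}$ fails in general. What your computation actually proves is $\widehat U^m_{e^rR,s}\circ\widehat U^m_{R,r}\subset\widehat U^m_{R,r+s}$, the transpose of what is printed. This is evidently what the authors intended (and is what makes the later uses of (\ref{i: circ}) and the metrizability argument work), so there is a typo in the paper, but you should flag the discrepancy rather than silently assume the order that makes the argument go through.
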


\begin{proof}
  	Properties~\eqref{i: cap} and~\eqref{i: Delta} are elementary, and~\eqref{i: ^-1} and~\eqref{i: circ} are consequences of Lemma~\ref{l: composition and inversion of pointed local quasi-equivalences}.
\end{proof}

\begin{prop}\label{p: Hausdorff widehat MM_*(n)}
$\bigcap_{R,r>0}\widehat U^m_{R,r}=\Delta$ for all $m\in\N$.
\end{prop}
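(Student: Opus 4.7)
The statement is the separation (Hausdorffness) axiom for the uniformity described by Proposition~\ref{p: C^infty uniformity in widehat MM_*(n)}: if $([M,f,x],[N,h,y])$ lies in every $\widehat U^m_{R,r}$, then we must produce an honest pointed equivalence $(M,f,x)\to(N,h,y)$, i.e.\ find an isometry $\phi\colon M\to N$ with $\phi(x)=y$ and $\phi^*h=f$. The strategy is a Cheeger--Gromov / Arzel\`a--Ascoli compactness argument: extract such a $\phi$ as a $C^\infty$ subsequential limit of pointed local quasi-equivalences whose quasi-isometry constants tend to $1$ and whose function-defect constants tend to $0$.

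\textbf{Step 1: Extracting the sequence.} Under the hypothesis, for each integer $k\ge 1$ we can choose an $(m,k,\lambda_k,\epsilon_k)$-pointed local quasi-equivalence
$\phi_k\colon(M,f,x)\rightarrowtail(N,h,y)$ with $1\le\lambda_k<e^{1/k}$ and $0\le\epsilon_k<1/k$. By Remark~\ref{r: (m,R,lambda,epsilon)-...}\eqref{i: C^infty approximation of (m,R,lambda,epsilon)-...}, after a harmless loosening of the parameters (which still tend to $1$, $0$, and $\infty$), each $\phi_k$ may be taken to be $C^\infty$. Since $M\subset T^{(m)}M$ is totally geodesic, $\dom\phi_k\supset B_M(x,k)$, so the domains exhaust $M$.

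\textbf{Step 2: Uniform $C^{m+1}$ bounds and subsequential limit.} Fix any compact domain $\Omega\subset M$ containing $x$; then $\Omega\subset B_M(x,k)\subset\dom\phi_k$ for all large $k$. The hypothesis that $(\phi_k)_*^{(m)}$ is a $\lambda_k$-quasi-isometry on a compact domain of $T^{(m)}M$ containing $B_M^{(m)}(x,k)$ gives, via Lemma~\ref{l: f_*^(m)} applied with $\phi_k$ in place of $f$ (and using the sections $\sigma^{(m)}_{\rho,\mu}$ to read off partial derivatives up to order $m$ of the coordinate expression of $\phi_k$), a uniform-in-$k$ bound in the $C^{m+1}$ norm $\|\cdot\|_{C^{m+1},\Omega,\UU,\KK}$ for $\phi_k|_\Omega$. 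Exhausting $M$ by an increasing sequence of compact domains and applying Arzel\`a--Ascoli together with a diagonal extraction produces a subsequence (still denoted $\phi_k$) converging in the $C^{m+1}$ topology on compact subsets of $M$ to a $C^{m+1}$ map $\phi\colon M\to N$ with $\phi(x)=y$.

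\textbf{Step 3: Identifying $\phi$ as a pointed equivalence.} Since $\lambda_k\to 1$, pointwise convergence of $(\phi_k)_*$ yields $\phi^*g_N=g_M$, so $\phi$ is a local isometry. Since $\epsilon_k\to 0$, the order-zero part of the defining inequality $\|f_*^{(m)}-(\phi_k^*h)_*^{(m)}\|\le\epsilon_k$ gives $\phi^*h=f$ in the limit. Applying the same compactness argument to the inverses $\phi_k^{-1}\colon(N,h,y)\rightarrowtail(M,f,x)$, which are $(m,R_k',\lambda_k,\epsilon_k)$-pointed local quasi-equivalences with $R_k'\to\infty$ by Lemma~\ref{l: composition and inversion of pointed local quasi-equivalences}\eqref{i: inversion}, and passing to a common subsequence, we obtain a pointed local isometry $\psi\colon(N,h,y)\to(M,f,x)$ with $\psi^*f=h$. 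Taking $C^{m+1}$ limits in the identities $\phi_k^{-1}\circ\phi_k=\id$ and $\phi_k\circ\phi_k^{-1}=\id$ on the relevant balls shows $\psi\circ\phi=\id_M$ and $\phi\circ\psi=\id_N$. Hence $\phi$ is a global isometry and a pointed equivalence, giving $[M,f,x]=[N,h,y]$.

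\textbf{Main obstacle.} The technical crux is Step~2: converting the $\lambda_k$-quasi-isometry of the iterated differential $(\phi_k)_*^{(m)}$ into genuine uniform $C^{m+1}$ bounds for the underlying maps $\phi_k$ themselves. This is where Lemma~\ref{l: f_*^(m)} is decisive, since it expresses partial derivatives of order $\le m$ as linear combinations of evaluations of $(\phi_k)_*^{(m),\lambda}$ on the sections $\sigma^{(m)}_{\rho,\mu}$; a secondary subtlety is coordinating the subsequences used for $\phi_k$ and for $\phi_k^{-1}$ so that the composition identities pass cleanly to the limit.
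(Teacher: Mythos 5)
Your argument follows the same route as the paper's: extract a pointed global isometry $\psi\colon(M,x)\to(N,y)$ as an Arzel\`a--Ascoli subsequential limit of the $\phi_k$ (the paper delegates this extraction to the proof of \cite[Proposition~5.3]{AlvarezBarralCandel-universal}, whereas you re-derive it, including the composition-with-inverses argument for bijectivity), and then let $\epsilon_k\to0$ in the order-zero part of the defect inequality to conclude $\psi^*h=f$. One minor imprecision in Step~2: uniform $C^{m+1}$ bounds on $\phi_k|_\Omega$ only yield subsequential convergence in $C^m$ (the weak $C^m$ convergence the paper actually invokes), not $C^{m+1}$, but that is already enough to see the limit is a Riemannian isometry and to pass the identities $\phi_k^{-1}\circ\phi_k=\id$ to the limit.
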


\begin{proof}
  	We only have to prove ``$\subset$'' by Proposition~\ref{p: C^infty uniformity in widehat MM_*(n)}-\eqref{i: Delta}. For $([M,f,x],[N,h,y])\in\bigcap_{R,r>0}\widehat U^m_{R,r}$, there is a sequence of pointed local quasi-equivalences $\phi_i:(M,f,x)\rightarrowtail(N,h,y)$, with corresponding types $(m,R_i,\lambda_i,\epsilon_i)$, such that $R_i\uparrow\infty$, $\lambda_i\downarrow 1$ and $\epsilon_i\downarrow 0$ as $i\to\infty$. According to the proof of  \cite[Proposition~5.3]{AlvarezBarralCandel2016}, for each $i$, there is some subsequence $\phi_{k(i,l)}$ whose restriction to $B_M(x,R_i)$ converges to some pointed isometric immersion $\psi_i:(B_M(x,R_i),x)\to(N,y)$ in the weak $C^m$ topology, $\psi_{i+1}|_{B_M(x,R_i)}=\psi_i$ for all $i$, and the combination of the maps $\psi_i$ is a pointed isometry $\psi:(M,x)\to(N,y)$. For every $x'\in M$ and $\epsilon>0$, there are some $i$ and $\delta>0$ so that $x'\in B_M(x,R_i)$, $\epsilon_i\le\epsilon/2$, and $\|h(y')-h(y'')\|<\epsilon/2$ if $d_N(y',y'')<\delta$ for all $y',y''\in\ol B_M(x,R_i)$. Moreover there is some $l$ such that $d_N(\phi_{k(i,l)}(x'),\psi_i(x'))<\delta$. Hence
		\[
			\|f(x')-h\circ\psi(x')\|
			\le\|f(x')-h\circ\phi_{k(i,l)}(x')\|+\|h\circ\phi_{k(i,l)}(x')-h\circ\psi(x')\|
			<\epsilon_i+\epsilon/2\le\epsilon.
		\]
	Since $x'$ and $\epsilon$ are arbitrary, it follows that $\psi:(M,f,x)\to(N,h,y)$ is an equivalence, and therefore $[M,f,x]=[N,h,y]$. 
\end{proof}

By Propositions~\ref{p: C^infty uniformity in widehat MM_*(n)} and~\ref{p: Hausdorff widehat MM_*(n)}, the sets $\widehat U_{R,r}^m$ form a base of entourages of a separating uniformity on $\widehat\MM_*(n)$, which is called the \emph{$C^\infty$ uniformity}. 
 
\begin{defn}\label{d: widehat D^m_R,r}
  	For $R,r>0$ and $m\in\N$, let $\widehat D^m_{R,r}$ be the set of pairs $([M,f,x],[N,h,y])\in\widehat\MM_*(n)\times\widehat\MM_*(n)$ such that there is some $C^{m+1}$ pointed local diffeomorphism $\phi\colon (M,x) \rightarrowtail (N,y)$ so that $\|g_M-\phi^*g_N\|_{C^m,\Omega,g_M}<r$ and $\|f-\phi^*h\|_{C^m,\Omega,g_M}<r$ for some compact domain $\Omega\subset\dom\phi$ with $B_M(x,R)\subset\Omega$.
\end{defn} 
 
 \begin{rem}\label{r: widehat D^m_R,r}
   	By~\eqref{norm equiv}, and its version for $\E$-valued functions, a sequence $[M_i,f_i,x_i]\in\widehat\MM_*(n)$ is $C^\infty$ convergent to $[M,f,x]\in\widehat\MM_*(n)$ if and only if it is eventually in\footnote{Given a set $X$, for $U\subset X\times X$ and $x\in X$, let $U(x)=\{\,y\in Y\mid(x,y)\in U\,\}$. In the case of $U\subset\widehat\MM_*(n)\times\widehat\MM_*(n)$ and $[M,f,x]\in\widehat\MM_*(n)$, we simply write $U(M,f,x)$.} $\widehat D^m_{R,r}(M,f,x)$ for arbitrary $m\in\N$ and $R,r>0$.
 \end{rem}
 
\begin{prop}\label{p: widehat D^m_R,r'(M,f,x) subset widehat U^m_R,r(M,f,x)}
  	The following properties hold:
  		\begin{enumerate}[{\rm(}i\/{\rm)}]

  			\item\label{i: widehat D^0_R,r' subset widehat U^0_R,r} For all $R,r>0$, if $0<r'\le\min\{1-e^{-2r},e^{2r}-1,r\}$, then $\widehat D^0_{R,r'}\subset\widehat U^0_{R,r}$.
		
  			\item\label{i: widehat D^m_R,r'(M,f,x) subset widehat U^m_R,r(M,f,x)} For all $m\in\Z^+$, $R,r>0$ and $[M,f,x]\in\widehat\MM_*(n)$, there is some $r'>0$ such that $\widehat D^m_{R,r'}(M,f,x)\subset\widehat U^m_{R,r}(M,f,x)$.

  		\end{enumerate}
\end{prop}

\begin{proof}
  	Let us show~\eqref{i: widehat D^0_R,r' subset widehat U^0_R,r}. If $([M,f,x],[N,h,y])\in\widehat D^0_{R,r'}$, then there is a $C^1$ pointed local diffeomorphism $\phi:(M,x)\rightarrowtail(N,y)$ such that $r'_0:=\|g_M-\phi^*g_N\|_{C^0,\Omega,g_M}<r'$ and $\epsilon_0:=\|f-\phi^*h\|_{C^0,\Omega,g_M}<r'$ for some compact domain $\Omega\subset\dom\phi$ with $B_M(x,R)\subset\Omega$. Take some $\lambda\in[1,e^r)$ such that $r'_0\le\min\{1-\lambda^{-2},\lambda^2-1\}$. According to the proof of \cite[Proposition~6.4-(i)]{AlvarezBarralCandel2016}, $\phi:\Omega\to N$ is a $\lambda$-quasi-isometry. Since moreover $\|f-\phi^*h\|_{\Omega}\le\epsilon_0$, it follows that $\phi$ is a $(0,R,\lambda,r'_0,\epsilon_0)$-pointed local quasi-equivalence, obtaining that $([M,f,x],[N,h,y])\in\widehat U^0_{R,r}$.

  	Let us prove~\eqref{i: widehat D^m_R,r'(M,f,x) subset widehat U^m_R,r(M,f,x)}. Take $m\in\Z^+$, $R, r>0$ and $[M,f,x]\in\widehat\MM_*(n)$. Let $\UU$ be a finite collection of charts of $M$ with domains $U_a$, and let $\KK=\{K_a\}$ be a family of compact subsets of $M$, with the same index set as $\UU$, such that $K_a\subset U_a$ for all $a$, and $\ol B_M(x,R)\subset\Int(K)$ for $K=\bigcup_aK_a$. Let $r'>0$, to be fixed later. For any $[N,h,y]\in\widehat D^m_{R,r'}(M,x)$, there is a $C^{m+1}$ pointed local diffeomorphism $\phi\colon(M,x)\rightarrowtail(N,y)$ so that $\|g_M-\phi^*g_N\|_{C^m,\Omega,g_M}<r'$ and $\|f-\phi^*h\|_{C^m,\Omega,g_M}<r'$ for some compact domain $\Omega\subset\dom\phi\cap\Int(K)$ with $B_M(x,R)\subset\Omega$. By continuity, there is another compact domain $\Omega'\subset\dom\phi\cap\Int(K)$ such that $\Omega\subset\Int(\Omega')$, $\|g_M-\phi^*g_N\|_{C^m,\Omega',g_M}<r'$ and $\|f-\phi^*h\|_{C^m,\Omega',g_M}<r'$. According to the proof of \cite[Proposition~6.4-(i)]{AlvarezBarralCandel2016}, if $r'$ is small enough (depending on $m$, $R$, $r$ and $[M,x]$), then there is some compact domain $\Omega^{(m)}\subset T^{(m)}M$ such that $B^{(m)}_M(x,R)\subset\Omega^{(m)}\subset\pi^{-1}(\Omega')$, where $\pi:T^{(m)}M\to M$, and $\phi_*^{(m)}:\Omega^{(m)}\to T^{(m)}N$ is a $\lambda$-quasi-isometry for some $\lambda\in[1,e^r)$. Given $\epsilon\in(0,r)$, choose some $C\ge1$ satisfying~\eqref{norm equiv} for $\E$-valued functions with $\UU$, $\KK$, $\Omega'$ and $g$, and, according to Lemma~\ref{l: f_*^(m)}-\eqref{i: f_*^(m)}, choose some $\epsilon'>0$ such that
    		\[
    			\|f-\phi^*h\|_{C^m,\Omega',\UU,\KK}<\epsilon'\,\Longrightarrow\,
    			\|f_*^{(m)}-(\phi^*h)_*^{(m)}\|_{\Omega^{(m)}}<\epsilon\;.
    		\]
  	Suppose that $r'\le\epsilon'/C$. Then
  		\[
    			\|f-\phi^*h\|_{C^m,\Omega',g_M}<r'
    			\,\Longrightarrow\,\|f-\phi^*h\|_{C^m,\Omega',\UU,\KK}<Cr'\le\epsilon'
    			\,\Longrightarrow\,\|f_*^{(m)}-(\phi^*h)_*^{(m)}\|_{\Omega^{(m)}}<\epsilon\;.
  		\]
  	Hence $\phi$ is an $(m,R,\lambda,\epsilon)$-pointed local quasi-equivalence $(M,f,x)\rightarrowtail (N,h,y)$, and therefore $[N,h,y]\in\widehat U^{(m)}_{R,r}(M,f,x)$.
\end{proof}

\begin{prop}\label{p: widehat U^m_R,r'(M,f,x) subset widehat D^m_R,r(M,f,x)}
  	The following properties hold:
		\begin{enumerate}[{\rm(}i\/{\rm)}]

  			\item\label{i: widehat U^0_R,r' subset widehat D^0_R,r} For all $R,r>0$, if $e^{2r'}-e^{-2r'}\le r$, then $\widehat U^0_{R,r'}\subset\widehat D^0_{R,r}$.
		
  			\item\label{i: widehat U^m_R,r'(M,f,x) subset widehat D^m_R,r(M,f,x)} For all $m\in\Z^+$, $R, r>0$ and $[M,f,x]\in\widehat\MM_*(n)$, there is some $r'>0$ such that $\widehat U^m_{R,r'}(M,f,x)\subset\widehat D^m_{R,r}(M,f,x)$.

  		\end{enumerate}
\end{prop}

\begin{proof}
  	Let us show~\eqref{i: widehat U^0_R,r' subset widehat D^0_R,r}. If $([M,f,x],[N,h,y])\in\widehat U^0_{R,r'}$, then there is a $(0,R,\lambda,\epsilon)$-pointed local quasi-equivalence $\phi:(M,f,x)\rightarrowtail(N,h,y)$ for some $\lambda\in[1,e^{r'})$ and $\epsilon\in(0,r')$. Thus there is some compact domain $\Omega\subset\dom\phi$ such that $B_M(x,R)\subset\Omega$ and $\phi:(\Omega,f)\to(N,h)$ is a $(\lambda,\epsilon)$-quasi-equivalence. According to the proof of \cite[Proposition~6.5-(i)]{AlvarezBarralCandel2016}, $\|g_M-\phi^*g_N\|_{C^0,\Omega,g}<r$. So $([M,f,x],[N,h,y])\in\widehat D^0_{R,r}$.

  	Let us prove~\eqref{i: widehat U^m_R,r'(M,f,x) subset widehat D^m_R,r(M,f,x)}. Let $m\in\Z^+$, $R, r>0$ and $[M,f,x]\in\widehat\MM_*(n)$. Take $\UU$, $\KK$ and $K$ like in the proof of Proposition~\ref{p: widehat D^m_R,r'(M,f,x) subset widehat U^m_R,r(M,f,x)}-\eqref{i: widehat D^m_R,r'(M,f,x) subset widehat U^m_R,r(M,f,x)}. Let $r'>0$, to be fixed later. For any $[N,h,y]\in\widehat U^m_{R,r'}(M,x)$, there is an $(m,R,\lambda,\epsilon)$-pointed local quasi-equivalence $\phi:(M,f,x)\rightarrowtail(N,h,y)$ for some $\lambda\in[1,e^{r'})$ and $\epsilon\in(0,r')$. Thus there is a compact domain $\Omega^{(m)}\subset\dom\phi_*^{(m)}\cap\Int(K^{(m)})$ so that $B_M^{(m)}(x,R)\subset\Omega^{(m)}$ and $\phi_*^{(m)}:(\Omega^{(m)},f_*^{(m)})\to(T^{(m)}N,h_*^{(m)})$ is a $(\lambda,\epsilon)$-quasi-equivalence. According to the proof of  \cite[Proposition~6.5-(ii)]{AlvarezBarralCandel2016}, there are compact domains, $\Omega'^{(m)}\subset\dom\phi_*^{(m)}$ and $\Omega\subset M$, such that $\Omega^{(m)}\subset\Int(\Omega'^{(m)})$, $\Omega^{(m)}\cap M\subset\Omega\subset\Int(\Omega'^{(m)})$, and $\|g_M-\phi^*g_N\|_{C^m,\Omega,g}<r$ if $r'$ is small enough; in particular, $B_M(x,R)\subset\Omega$ because $M$ is a totally geodesic Riemannian submanifold of $T^{(m)}M$. Take some $C\ge1$ satisfying~\eqref{norm equiv} for $\E$-valued functions with $\UU$, $\KK$, $\Omega$ and $g_M$. With the notation of Section~\ref{s: prelim}, for $\rho>0$ and $n+1\le\mu\le2^mn$, let $\sigma^{(m)}_{a,\rho,\mu}:U_a\to U_a^{(m)}$ be the section of each $\pi:U_a^{(m)}\to U_a$ of the type used in Lemma~\ref{l: f_*^(m)}-\eqref{i: partial up to order m of f}. Since $\Omega\subset\Int(\Omega'^{(m)})$, there is some $\rho>0$ so that $\sigma^{(m)}_{\rho,\mu}(K_a\cap\Omega)\subset\Omega'^{(m)}$ for all $a$ and $\mu$. Thus, by Lemma~\ref{l: f_*^(m)}-\eqref{i: partial up to order m of f}, there is some $\epsilon'>0$, depending on $r$ and $\rho$, such that
  		\[
  			\|f_*^{(m)}-(\phi^*h)_*^{(m)}\|_{\Omega'^{(m)}}<\epsilon'
			\,\Longrightarrow\,\|f^*-\phi^*h\|_{C^m,\Omega,\UU,\KK}<r/C\;.
  		\]
  	Suppose that moreover $r'<\epsilon'$, and therefore $\epsilon<\epsilon'$. Then
  		\[
    			\|f_*^{(m)}-(\phi^*h)_*^{(m)}\|_{\Omega'^{(m)}}\le\epsilon<\epsilon'
    			\,\Longrightarrow\,\|f-\phi^*h\|_{C^m,\Omega,\UU,\KK}<r/C
    			\,\Longrightarrow\,\|f-\phi^*h\|_{C^m,\Omega,g}<r\;,
  		\]
  	showing that $[N,h,y]\in\widehat D^{(m)}_{R,r}(M,f,x)$.
\end{proof}

\begin{cor}\label{c: C^infty convergence in MM_*(n)}
  	The $C^\infty$ convergence in $\widehat\MM_*(n)$ describes the topology induced by the $C^\infty$ uniformity.
\end{cor}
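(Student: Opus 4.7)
The plan is to reduce the statement to the observation that a sequence converges in the topology induced by the $C^\infty$ uniformity if and only if it is eventually contained in every basic neighborhood $\widehat U^m_{R,r}(M,f,x)$ of its proposed limit, and then to translate this assertion, via Propositions~\ref{p: widehat D^m_R,r'(M,f,x) subset widehat U^m_R,r(M,f,x)} and~\ref{p: widehat U^m_R,r'(M,f,x) subset widehat D^m_R,r(M,f,x)}, into the corresponding statement about the sets $\widehat D^m_{R,r}(M,f,x)$, which by Remark~\ref{r: widehat D^m_R,r} is exactly the definition of $C^\infty$ convergence. Since by Proposition~\ref{p: C^infty uniformity in widehat MM_*(n)}--\eqref{i: cap} the entourages $\widehat U^m_{R,r}$ with $m\in\N$ and $R,r$ rational form a countable base, the uniformity is metrizable (using also Proposition~\ref{p: Hausdorff widehat MM_*(n)} for separation), so its topology is first countable and thus determined by its convergent sequences.

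The forward direction proceeds as follows. Assume $[M_i,f_i,x_i]\to[M,f,x]$ in the uniform topology. Given $m\in\N$ and $R,r>0$, choose $r'>0$ as in Proposition~\ref{p: widehat U^m_R,r'(M,f,x) subset widehat D^m_R,r(M,f,x)}\eqref{i: widehat U^m_R,r'(M,f,x) subset widehat D^m_R,r(M,f,x)} (or~\eqref{i: widehat U^0_R,r' subset widehat D^0_R,r} when $m=0$). Eventually $[M_i,f_i,x_i]\in \widehat U^m_{R,r'}(M,f,x)$, hence eventually $[M_i,f_i,x_i]\in\widehat D^m_{R,r}(M,f,x)$. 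Since this holds for every $m,R,r$, Remark~\ref{r: widehat D^m_R,r} yields $C^\infty$ convergence. The reverse direction is entirely symmetric: starting from $C^\infty$ convergence and Remark~\ref{r: widehat D^m_R,r}, for each prescribed $m,R,r$ one invokes Proposition~\ref{p: widehat D^m_R,r'(M,f,x) subset widehat U^m_R,r(M,f,x)} to choose $r'$ so that $\widehat D^m_{R,r'}(M,f,x)\subset\widehat U^m_{R,r}(M,f,x)$, concluding that the sequence is eventually in every basic neighborhood of $[M,f,x]$ in the uniform topology.

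No new analysis is required; all the genuine content, namely the comparison between the intrinsic Sasaki-metric formulation of $\widehat U^m_{R,r}$ and the chart-based $C^m$ formulation of $\widehat D^m_{R,r}$, has already been settled in the two propositions. The only subtle bookkeeping point is that the constants $r'$ supplied by those propositions depend on the basepoint $[M,f,x]$, but this causes no difficulty because convergence of a sequence is tested against neighborhoods of a single fixed limit, and so the resulting inclusions need only hold at that point.
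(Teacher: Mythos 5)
Your proof is correct and takes essentially the same route as the paper: the paper's own proof of this corollary is simply the one-line observation that it follows from Remark~\ref{r: widehat D^m_R,r} and Propositions~\ref{p: widehat D^m_R,r'(M,f,x) subset widehat U^m_R,r(M,f,x)} and~\ref{p: widehat U^m_R,r'(M,f,x) subset widehat D^m_R,r(M,f,x)}, and your argument spells out exactly the intended mutual inclusions between the $\widehat U$- and $\widehat D$-neighborhoods in both directions. The remark about metrizability/first countability (which the paper only records separately in Proposition~\ref{p: widehat MM_*^infty(n) is completely metrizable}) and the note that the basepoint-dependence of $r'$ is harmless are both correct and appropriate.
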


\begin{proof}
  	This is a direct consequence of Remark~\ref{r: widehat D^m_R,r} and Propositions~\ref{p: widehat D^m_R,r'(M,f,x) subset widehat U^m_R,r(M,f,x)} and~\ref{p: widehat U^m_R,r'(M,f,x) subset widehat D^m_R,r(M,f,x)}.
\end{proof}

According to Corollary~\ref{c: C^infty convergence in MM_*(n)}, the $C^\infty$ uniformity induces what was called the $C^\infty$ topology in Section~\ref{s: intro}. Recall that the corresponding space is denoted by $\widehat\MM_*^\infty(n)$, and the notation $\widehat{\Cl}_\infty$ is used for the closure operator in $\widehat\MM_*^\infty(n)$.

\begin{prop}\label{p: widehat MM_*^infty(n) is separable}
	$\widehat\MM_*^\infty(n)$ is separable.
\end{prop}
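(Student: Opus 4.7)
The plan is to combine the separability of $\MM_*^\infty(n)$, which is established in \cite{AlvarezBarralCandel-universal}, with the separability of $C^\infty(M,\E)$ for each fixed Riemannian manifold $M$. Since $\MM_*^\infty(n)$ is Polish, I fix a countable dense subset $\{[M_j,x_j]\}_{j\in\N}$. For each $j$, the Fr\'echet space $C^\infty(M_j,\E)$ is separable because $M_j$ is a second countable smooth manifold and $\E$ is a separable Hilbert space, so I fix a countable dense subset $\{h_{j,k}\}_{k\in\N}$. The candidate countable dense subset of $\widehat\MM_*^\infty(n)$ is then
\[
S=\{\,[M_j,h_{j,k},x_j]\mid j,k\in\N\,\}\;.
\]

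To verify density, I start from an arbitrary $[M,f,x]\in\widehat\MM_*^\infty(n)$ and a basic neighborhood $\widehat D^m_{R,r}(M,f,x)$. By density of $\{[M_j,x_j]\}$ in $\MM_*^\infty(n)$, applied with the parameters $(m,R,r)$ in the unhatted analogue of Definition~\ref{d: widehat D^m_R,r}, I can choose $j$ together with a $C^{m+1}$ pointed local diffeomorphism $\phi\colon (M,x)\rightarrowtail(M_j,x_j)$ and a compact domain $\Omega\subset\dom\phi$ with $B_M(x,R)\subset\Omega$ and $\|g_M-\phi^*g_{M_j}\|_{C^m,\Omega,g_M}<r$. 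Setting $K=\phi(\Omega)\subset M_j$, I push $f|_\Omega$ forward to $f\circ\phi^{-1}$, which is smooth on the open set $\phi(\dom\phi)\supset K$; multiplying by a smooth cutoff that equals $1$ on $K$ and has compact support inside $\phi(\dom\phi)$, and extending by $0$, I obtain $\tilde f\in C^\infty(M_j,\E)$ with $\phi^*\tilde f=f$ on $\Omega$. By density of $\{h_{j,k}\}_k$ in $C^\infty(M_j,\E)$, some $h_{j,k}$ approximates $\tilde f$ in the $C^m$ norm on $K$ arbitrarily well, and since $\phi|_\Omega\colon\Omega\to K$ is a fixed $C^{m+1}$ diffeomorphism of compact domains, the pullback operator $\phi^*$ is continuous from $C^m(K,\E)$ to $C^m(\Omega,\E)$. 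Taking $k$ large enough therefore yields $\|f-\phi^*h_{j,k}\|_{C^m,\Omega,g_M}<r$, so $[M_j,h_{j,k},x_j]\in\widehat D^m_{R,r}(M,f,x)$, which proves that $S$ is dense.

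The step I expect to require the most care is the separability of the target Fr\'echet space $C^\infty(M_j,\E)$ when $M_j$ is a complete, possibly noncompact Riemannian manifold of nontrivial topology: this follows by writing $M_j$ as a countable union of relatively compact coordinate charts, using a countable partition of unity to reduce to Euclidean patches, and approximating on each patch in the $C^\infty$ topology by polynomials whose coefficients lie in a countable dense subset of $\E$ (together with a countable exhaustion by compact sets to handle the projective limit of $C^m$ seminorms). The remainder of the argument is an essentially routine bookkeeping of two independent $C^m$-approximation estimates on the compact domain $\Omega$, one for the metric tensor and one for the $\E$-valued function, each provided by the corresponding density statement.
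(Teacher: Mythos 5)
Your proof is correct, and it takes a route that is genuinely though not dramatically different from the paper's. The paper mimics the proof of \cite[Proposition~7.1]{AlvarezBarralCandel-universal} directly: it picks a countable family $\CC$ of compact $C^\infty$ manifolds containing one representative of each diffeomorphism class, and for each $M\in\CC$ a countable dense family of metrics, of points, and of $\E$-valued $C^\infty$ functions; the resulting countable family of triples is then shown dense in $\widehat\MM_*^\infty(n)$ by the compact-domain approximation argument that makes \cite[Proposition~7.1]{AlvarezBarralCandel-universal} work. You instead quote the separability of $\MM_*^\infty(n)$ as a black box, fix representatives $(M_j,x_j)$ (which are complete and typically noncompact), and then add a countable dense subset of $C^\infty(M_j,\E)$ on each one, proving density by pushing $f|_\Omega$ forward along the approximating local diffeomorphism $\phi$, extending by a cutoff to $\tilde f\in C^\infty(M_j,\E)$ with $\phi^*\tilde f=f$ on $\Omega$, and approximating $\tilde f$ on the compact $K=\phi(\Omega)$, using that $\phi^*:C^m(K,\E)\to C^m(\Omega,\E)$ is bounded. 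The only small thing to be precise about, which you flag and which is standard, is the separability of $C^\infty(M_j,\E)$ in the weak $C^\infty$ topology (take orthogonal truncations $\Pi_N\circ f\to f$ uniformly on compacta by Dini, then approximate the finite-dimensional-valued function by polynomials with coefficients in a countable dense subset). The trade-off is essentially one of presentation: your argument layers the new $\E$-valued data over the already-known separability of the base space $\MM_*^\infty(n)$, at the cost of carrying out the cutoff/pullback estimate explicitly; the paper's choice instead makes the countable dense set entirely explicit (and contained in the ``compact leaf'' part $\widehat\MM_{*,\text{\rm c}}^\infty(n)$, which it reuses later in Propositions~\ref{p: density} and~\ref{p: transitivity}), but requires re-running the compact-manifolds argument of \cite{AlvarezBarralCandel-universal}. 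Both are sound.
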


\begin{proof}
  	According to the proof of \cite[Proposition~7.1]{AlvarezBarralCandel2016}, there is a countable family $\mathcal{C}$ of $C^\infty$ compact manifolds containing exactly one representative of every diffeomorphism class, and, for every $M\in\CC$, there is a countable dense subset $\GG_M$ of the space of metrics on $M$ with the $C^\infty$ topology. Take also countable dense subsets, $\DD_M\subset M$ and $\FF_M\subset C^\infty(M,\E)$. Then, like in the proof of \cite[Proposition~7.1]{AlvarezBarralCandel2016}, the countable set
  		\begin{equation}\label{countable dense subset}
  			\{\,[(M,g),f,x]\mid M\in\mathcal{C},\ g\in\GG_M,\ x\in\DD_M,\ f\in\FF_M\,\}
  		\end{equation}
  	is dense in $\widehat\MM_*^\infty(n)$.
\end{proof}

\begin{prop}\label{p: widehat MM_*^infty(n) is completely metrizable}
	The $C^\infty$ uniformity is complete and metrizable.
\end{prop}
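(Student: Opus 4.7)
The plan is to treat metrizability and completeness separately. For metrizability, recall that a separating uniformity with a countable base of entourages is metrizable (this is Weil's classical metrization theorem for uniform spaces). Proposition~\ref{p: Hausdorff widehat MM_*(n)} already yields the separation axiom, and by Proposition~\ref{p: C^infty uniformity in widehat MM_*(n)}-\eqref{i: cap} the subfamily $\{\widehat U^m_{R,r}\mid m\in\N,\ R,r\in\Q_{>0}\}$ is directed and cofinal in the base defining the $C^\infty$ uniformity; it is manifestly countable. Therefore metrizability is immediate.

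For completeness, I would fix a Cauchy sequence $[M_i,f_i,x_i]$ in $\widehat\MM_*(n)$. Composing with the forgetful map $\widehat\MM_*^\infty(n)\to\MM_*^\infty(n)$, which is uniformly continuous by construction (the local quasi-equivalences are in particular local quasi-isometries), the sequence $[M_i,x_i]$ is Cauchy in $\MM_*^\infty(n)$. By the analogous completeness result \cite[Proposition~7.2]{AlvarezBarralCandel-universal}, there is a complete connected pointed Riemannian $n$-manifold $(M,x)$ and, after passing to a subsequence (not relabelled), pointed local quasi-isometries $\phi_i\colon(M,x)\rightarrowtail(M_i,x_i)$ of type $(m_i,R_i,\lambda_i)$ with $m_i\to\infty$, $R_i\to\infty$ and $\lambda_i\downarrow1$, whose domains exhaust $M$ and such that $\phi_i^*g_i\to g_M$ in the $C^\infty$ topology on compact subsets of $M$. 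In fact, by the Cauchy property and Propositions~\ref{p: widehat D^m_R,r'(M,f,x) subset widehat U^m_R,r(M,f,x)} and~\ref{p: widehat U^m_R,r'(M,f,x) subset widehat D^m_R,r(M,f,x)}, I may further assume (after a second diagonal extraction) that the $\phi_i$ realize pointed local quasi-equivalences $(M,f_j,x)\rightarrowtail(M_i,f_i,x_i)$ with parameters tending to triviality, where $f_j$ is defined below.

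The new ingredient is to produce the limit function $f\in C^\infty(M,\E)$. For every compact domain $\Omega\subset M$ and every $m\in\N$, the Cauchy condition combined with Proposition~\ref{p: widehat U^m_R,r'(M,f,x) subset widehat D^m_R,r(M,f,x)} implies that the sequence $\phi_i^*f_i$ is Cauchy in $C^m(\Omega,\E)$ with respect to $\|\ \|_{C^m,\Omega,g_M}$: indeed, on $\Omega$ one has, for $i,j$ large,
\[
\phi_j^*f_j-\phi_i^*f_i=\phi_i^*\bigl((\phi_i\circ\phi_j^{-1})^*f_j-f_i\bigr),
\]
and the composite $\phi_i\circ\phi_j^{-1}$ is a pointed local quasi-equivalence $(M_j,f_j,x_j)\rightarrowtail(M_i,f_i,x_i)$ of arbitrarily small parameters by Lemma~\ref{l: composition and inversion of pointed local quasi-equivalences}, so the $C^m$ norm on the right tends to zero. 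Since $\E$ is complete and $C^m(\Omega,\E)$ is a Banach space, $\phi_i^*f_i$ converges in $C^m(\Omega,\E)$ to some limit, which is independent of $m$ by the usual uniqueness and which I denote $f|_\Omega$. Exhausting $M$ by compact domains and using the compatibility of the maps $\phi_i$ gives a well defined $f\in C^\infty(M,\E)$.

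It remains to verify that $[M_i,f_i,x_i]\to[M,f,x]$ in the $C^\infty$ topology. Given $m$, $R$, $r$, pick a compact domain $\Omega\subset M$ with $B_M(x,R)\subset\Omega$, and then $i$ large so that $\Omega\subset\dom\phi_i$, $\|g_M-\phi_i^*g_i\|_{C^m,\Omega,g_M}<r$ (by the convergence of metrics), and $\|f-\phi_i^*f_i\|_{C^m,\Omega,g_M}<r$ (by the construction of $f$). Then $([M,f,x],[M_i,f_i,x_i])\in\widehat D^m_{R,r}$, hence lies in $\widehat U^m_{R,r'}(M,f,x)$ for the $r'$ furnished by Proposition~\ref{p: widehat D^m_R,r'(M,f,x) subset widehat U^m_R,r(M,f,x)}. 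Since $m$, $R$, $r$ are arbitrary, this proves convergence of the subsequence, and the original Cauchy sequence converges to the same limit by a standard argument.

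The main obstacle is the double diagonal extraction and the need to guarantee that the limit function $f$ produced by the $C^m$-Cauchy property of $\phi_i^*f_i$ is both $C^\infty$ and compatible, across different choices of exhausting domains $\Omega$, with the single manifold $M$ obtained from the forgetful projection; once the uniform control of Propositions~\ref{p: widehat D^m_R,r'(M,f,x) subset widehat U^m_R,r(M,f,x)} and~\ref{p: widehat U^m_R,r'(M,f,x) subset widehat D^m_R,r(M,f,x)} is invoked, this becomes bookkeeping rather than genuine difficulty.
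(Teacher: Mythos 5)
Your metrizability argument is correct and matches the paper's: separation comes from Proposition~\ref{p: Hausdorff widehat MM_*(n)} and a countable cofinal family of entourages (the paper uses $\widehat U^k_{k,1/k}$) gives metrizability via the standard uniform metrization theorem.

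The completeness argument, however, has a genuine gap, and it is precisely at the point you wave away as ``bookkeeping.'' You take the forgetful image $[M_i,x_i]$, invoke completeness of $\MM_*^\infty(n)$ to get $(M,x)$ together with pointed local quasi-isometries $\phi_i\colon(M,x)\rightarrowtail(M_i,x_i)$, and then claim that $\phi_j\circ\phi_i^{-1}$ is a local \emph{quasi-equivalence} $(M_i,f_i,x_i)\rightarrowtail(M_j,f_j,x_j)$ with parameters tending to triviality (and there is also a typo: the composite that pulls $f_j$ back to $M_i$ is $\phi_j\circ\phi_i^{-1}$, not $\phi_i\circ\phi_j^{-1}$). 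But the $\phi_i$ supplied by the abstract statement that $[M_i,x_i]\to[M,x]$ only control the metric; the Cauchy condition in $\widehat\MM_*^\infty(n)$ guarantees the existence of \emph{some} local quasi-equivalences $(M_i,f_i,x_i)\rightarrowtail(M_j,f_j,x_j)$ with small parameters, not that the \emph{particular} maps $\phi_j\circ\phi_i^{-1}$ you have chosen are such quasi-equivalences. Lemma~\ref{l: composition and inversion of pointed local quasi-equivalences} needs both factor maps to already be quasi-equivalences for the functions, which is exactly what is not known here. A concrete failure mode: if $M=\R^n$, $M_i=\R^n$, $x_i=0$, $f_i=f$ fixed, the sequence $[M_i,f_i,x_i]$ is constant, yet the $\phi_i$ realizing the metric convergence can be taken to be rotations by angles $\theta_i$ with no limit, in which case $\phi_i^*f_i=\phi_i^*f$ need not converge. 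Your attempted repair (``I may further assume, after a second diagonal extraction, that the $\phi_i$ realize pointed local quasi-equivalences ... where $f_j$ is defined below'') is circular: the target functions are the very objects you are trying to build.

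The way the paper avoids this is to \emph{not} treat the completeness of $\MM_*^\infty(n)$ as a black box. It first chooses, from the Cauchy condition in $\widehat\MM_*^\infty(n)$, local quasi-equivalences $\phi_i\colon(M_i,f_i,x_i)\rightarrowtail(M_{i+1},f_{i+1},x_{i+1})$ between \emph{consecutive} terms, takes their composites $\psi_{ij}$, and then runs the \emph{construction} inside the proof of \cite[Proposition~7.2]{AlvarezBarralCandel-universal} on precisely those maps, which produces $(\widehat M,\hat x)$ together with maps $\psi_i\colon B_i\to\widehat M$ satisfying the compatibility $\psi_i=\psi_j\circ\psi_{ij}$. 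It is this built-in compatibility that makes $\psi_{ij}^*f_j$ a Cauchy family (the estimate in~\eqref{|f_ij*^(m_i) - f_ik*^(m_i)|_Omega_i^(m_i)} telescopes along the $\phi_i$) and lets the limit function $\hat f$ be well defined on all of $\widehat M$. Your proof could be repaired along these lines: feed the quasi-equivalences from the Cauchy condition in $\widehat\MM_*^\infty(n)$ (which are in particular quasi-isometries) into the explicit construction of \cite[Proposition~7.2]{AlvarezBarralCandel-universal}, rather than invoking its conclusion abstractly. As written, the maps $\phi_i$ have no link to the functions $f_i$, and the key Cauchy estimate for $\phi_i^*f_i$ is unjustified.
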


\begin{proof}
  	According to  \cite[Corollary~38.4]{Willard1970}, the $C^\infty$ uniformity on $\widehat\MM_*(n)$ is metrizable because it is separating and the sets $\widehat U_{k,1/k}$ ($k\in\Z^+$) form a countable base of entourages. To check that this uniformity is complete, consider an arbitrary Cauchy sequence $[M_i,f_i,x_i]$ in $\widehat\MM_*(n)$. We have to prove that $[M_i,f_i,x_i]$ is convergent in $\widehat\MM_*^\infty(n)$. By taking a subsequence if necessary, we can suppose that $([M_i,f_i,x_i],[M_{i+1},x_{i+1},f_{i+1}])\in U^{m_i}_{R_i,r_i}$ for sequences, $m_i\uparrow\infty$ in $\N$, and $R_i\uparrow\infty$ and $r_i\downarrow0$ in $\R^+$, such that $\sum_ir_i<\infty$, and $R_{i+1}\ge e^{r_i}R_i$ for all $i$. Let $\bar r_i=\sum_{j\ge i}r_j$. Consider other sequences $R'_i,R''_i\uparrow\infty$ in $\R^+$ such that $R'_i<R''_i\le e^{-\bar r_i}R_i$ and $R'_{i+1}\ge e^{r_i}R''_i$.

  	For each $i$, there is some $(m_i,R_i,\lambda_i,\epsilon_i)$-pointed local quasi-equivalence $\phi_i\colon(M_i,x_i)\rightarrowtail(M_{i+1},x_{i+1})$, for some $\lambda_i\in(1,e^{r_i})$ and $\epsilon_i\in(0,r_i)$. Then $\bar\lambda_i:=\prod_{j\ge i}\lambda_j<e^{\bar r_i}$ and $\bar\epsilon_i:=\sum_{j\ge i}\epsilon_j<\bar r_i$. Moreover each $\phi_i$ can be assumed to be $C^\infty$ by Remark~\ref{r: (m,R,lambda,epsilon)-...}-\eqref{i: C^infty approximation of (m,R,lambda,epsilon)-...}. For $i<j$, the pointed local quasi-equivalence $\psi_{ij}=\phi_{j-1}\circ\dots\circ\phi_i:(M_i,f_i,x_i)\rightarrowtail(M_j,x_j,f_j)$ is of type $(m_i,R_i/\bar\lambda_i,\bar\lambda_i,\bar r_i)$ by Lemma~\ref{l:  composition and inversion of pointed local quasi-equivalences}-\eqref{i: composition}.
	
	For $i,m\in\N$, let
  		\begin{alignat*}{3}
    			B_i&=B_i(x_i,R_i)\;,&\quad B_i'&=B_i(x_i,R'_i)\;,&\quad B_i''&=B_i(x_i,R''_i)\;,\\
    			B_i^{(m)}&=B_i^{(m)}(x_i,R_i)\;,&\quad
    			B_i'^{(m)}&=B_i^{(m)}(x_i,R'_i)\;,&\quad
    			B_i''^{(m)}&=B_i^{(m_i)}(x_i,R''_i)\;.
  		\end{alignat*}
  	A bar is added to this notation when the corresponding closed balls are considered. We have $\phi_i(\ol B_i)\subset B_{i+1}$ because $R_{i+1}>\lambda_iR_i$, and $\phi_{i*}^{(m_i)}(\ol B_i''^{(m_i)})\subset B_{i+1}'^{(m_i)}\subset B_{i+1}'^{(m_{i+1})}$ since $R'_{i+1}>\lambda_iR''_i$ and $g_{i+1}^{(m_i)}$ is the restriction of $g_{i+1}^{(m_i+1)}$. Furthermore $B_i''\subset\dom\psi_{ij}$ and $B_i''^{(m_i)}\subset\dom\psi_{ij*}^{(m_i)}$ for $i<j$ because $R''\le R_i/\bar\lambda_i$. Therefore $\psi_{ij}(B_i)\subset B_j$ and $\psi_{ij*}^{(m_i)}(B_i''^{(m_i)})\subset B_j'^{(m_j)}$. Take compact domains, $\Omega_i\subset M_i$ and $\Omega_i^{(m_i)}\subset T^{(m_i)}M_i$, such that $B_i'\subset\Omega_i\subset\Int(\Omega_i^{(m_i)})$ and $B_i'^{(m_i)}\subset\Omega_i^{(m_i)}\subset B_i''^{(m_i)}$; thus $\Omega_i\subset B_i''$ since $M_i$ is a totally geodesic Riemannian submanifold of $T^{(m_i)}M_i$.
  
  	According to the proof of \cite[Proposition~7.2]{AlvarezBarralCandel2016}, there is a pointed complete connected Riemannian manifold $(\widehat M,\hat x)$, and, for each $i$, there is some $C^\infty$ pointed map $\psi_i:(B_i,x_i)\to(\widehat M,\hat x)$ such that $\psi_{i*}^{(m_i)}:\Omega_i^{(m_i)}\to T^{(m_i)}\widehat M$ is a $\bar\lambda_i$-quasi-isometry, and $\psi_i=\psi_j\circ\psi_{ij}$ for $j\ge i$. Let $\widehat B_i=\psi_i(B_i)$, $\widehat\Omega_i=\psi_i(\Omega_i)$ and $\widehat\Omega_i^{(m_i)}=\psi_{i*}^{(m_i)}(\Omega_i^{(m_i)})$. Let $\hat f_i\in C^\infty(\widehat B_i,\E)$ be determined by $\psi_i^*\hat f_i=f_i|_{B_i}$. 
	
	\begin{claim}\label{cl: hat f_j}
    		For all $i$, the sequence $\hat f_j|_{\widehat\Omega_i}$ ($j\ge i$) is convergent in $C^{m_i}(\widehat\Omega_i,\E)$.
  	\end{claim}
	
  	This assertion follows by showing that the restrictions of the functions $f_{ij}:=\psi_{ij}^*f_j$ to $\Omega_i$, for $j\ge i$, form a convergent sequence in $C^{m_i}(\Omega_i,\E)$. Equivalently, we show that $f_{ij}|_{\Omega_i}$ is a Cauchy sequence with respect to $\|\ \|_{C^{m_i},\Omega_i,g_i}$. For $k\ge j\ge i$,
  		\begin{multline}\label{|f_ij*^(m_i) - f_ik*^(m_i)|_Omega_i^(m_i)}
  			\left\|f_{ij*}^{(m_i)}-f_{ik*}^{(m_i)}\right\|_{\Omega_i^{(m_i)}}
			=\left\|f_{j*}^{(m_i)}-f_{jk*}^{(m_i)}\right\|_{\psi_{ij}(\Omega_i^{(m_i)})}\\
  			\le\left\|f_{j*}^{(m_i)}-f_{j\,j+1*}^{(m_i)}\right\|_{\psi_{ij}(\Omega_i^{(m_i)})}+\dots
			+\left\|f_{k-1*}^{(m_i)}-f_{k-1\,k*}^{(m_i)}\right\|_{\psi_{i\,k-1*}^{(m_i)}(\Omega_i^{(m_i)})}\\
			\le\left\|f_{j*}^{(m_j)}-f_{j\,j+1*}^{(m_j)}\right\|_{\Omega_j^{(m_j)}}+\dots
			+\left\|f_{k-1*}^{(m_{k-1})}-f_{k-1\,k*}^{(m_{k-1})}\right\|_{\Omega_{k-1}^{(m_{k-1})}}
			\le\epsilon_j+\dots\epsilon_{k-1}<\bar\epsilon_j
  		\end{multline}
  	because
  		\[
  			\psi_{ij*}^{(m_i)}(\Omega_i^{(m_i)})\subset\psi_{ij*}^{(m_i)}(B_i''^{(m_i)})
  			\subset B_j'^{(m_j)}\subset\Omega_j^{(m_j)}
  		\]
  	and $f_{jk*}^{(m_j)}=f_{jk*}^{(m_i)}$ on $\Omega_j^{(m_j)}\cap B_j^{(m_i)}\supset\psi_{ij*}^{(m_i)}(\Omega_i^{(m_i)})$. 
	
  	Let $\UU_i$ be a finite collection of charts of $M_i$ with domains $U_{i,a}$, and let $\KK_i=\{K_{i,a}\}$ be a family of compact subsets of $M_i$, with the same index set as $\UU_i$, such that $K_{i,a}\subset U_{i,a}$ for all $a$, and $\ol B_i''\subset\bigcup_aK_{i,a}=:K_i$. Thus $\Omega_i\subset K_i$. Choose some $C_i\ge1$ satisfying~\eqref{norm equiv} for $\E$-valued functions with $\UU_i$, $\KK_i$, $\Omega_i$ and $g_i$. With the notation of Section~\ref{s: prelim}, for any $\rho>0$ and $n+1\le\mu\le2^{m_i}n$, let $\sigma^{(m_i)}_{i,a,\rho,\mu}:U_{i,a}\to U_{i,a}^{(m_i)}$ be the section of each $\pi:U_{i,a}^{(m_i)}\to U_{i,a}$ of the type used in Lemma~\ref{l: f_*^(m)}-\eqref{i: partial up to order m of f}. Since $\Omega_i\subset\Int(\Omega_i^{(m_i)})$, there is some $\rho>0$ so that $\sigma^{(m_i)}_{i,a,\rho,\mu}(K_{i,a}\cap\Omega_i)\subset K_{i,a}^{(m_i)}\cap\Omega_i^{(m_i)}$ for all $a$ and $\mu$. Thus, by Lemma~\ref{l: f_*^(m)}-\eqref{i: partial up to order m of f}, given any $\epsilon>0$, there is some $\delta>0$, depending on $\epsilon$ and $\rho$, such that
  		\begin{equation}\label{... < delta Longrightarrow ... < epsilon/C_i}
    			\|f_{ij*}^{(m_i)}-f_{ik*}^{(m_i)}\|_{\Omega_i^{(m_i)}}<\delta
    			\,\Longrightarrow\,\|f_{ij}-f_{ik}\|_{C^m,\Omega_i,\UU_i,\KK_i}<\epsilon/C_i\;.
  		\end{equation}
  	For $j$ large enough, we have $\bar\epsilon_j<\delta$, giving
  		\[
    			\|f_{ij*}^{(m_i)}-f_{ik*}^{(m_i)}\|_{\Omega_i^{(m_i)}}<\delta\\
    			\,\Longrightarrow\,\|f_{ij}-f_{ik}\|_{C^{m_i},\Omega_i,\UU_i,\KK_i}<\epsilon/C_i
    			\,\Longrightarrow\,\|f_{ij}-f_{ik}\|_{C^{m_i},\Omega_i,g_i}<\epsilon
 		 \]
  	by~\eqref{|f_ij*^(m_i) - f_ik*^(m_i)|_Omega_i^(m_i)},~\eqref{... < delta Longrightarrow ... < epsilon/C_i} and~\eqref{norm equiv}. This shows that $f_{ij}|_{\Omega_i}$ is a Cauchy sequence in the Banach space $C^{m_i}(\Omega_i,\E)$ with $\|\ \|_{C^{m_i},\Omega_i,g_i}$, and therefore it is convergent. This completes the proof of Claim~\ref{cl: hat f_j}.
	
  	According to Claim~\ref{cl: hat f_j}, for each $i$, let $\hat f_{i\infty}=\lim_{k\to\infty}\hat f_k|_{\widehat\Omega_i}$ in $C^{m_i}(\widehat\Omega_i,\E)$. Obviously, $\hat f_{j\infty}|_{\widehat\Omega_i}=\hat f_{i\infty}$ for $j>i$. Hence there is a function $\hat f\in C^\infty(\widehat M,\E)$ whose restriction to every $\widehat\Omega_i$ is $\hat f_{i\infty}$. From~\eqref{|f_ij*^(m_i) - f_ik*^(m_i)|_Omega_i^(m_i)}, we get $\|\hat f_{i*}^{(m_i)}-\hat f_{k*}^{(m_i)}\|_{\widehat\Omega_i^{(m_i)}}<\bar\epsilon_i$ for $k\ge i$, yielding $\|\hat f_{i*}^{(m_i)}-\hat f_*^{(m_i)}\|_{\widehat\Omega_i^{(m_i)}}\le\bar\epsilon_i$. Hence $\psi_i:(M_i,f_i,x_i)\rightarrowtail(\widehat M,\hat x,\hat f)$ is an $(m_i,R'_i,\bar\lambda_i,\bar\epsilon_i)$-pointed local quasi-equivalence. It follows that $([M_i,f_i,x_i],[\widehat M,\hat x,\hat f])\in\widehat U^{m_i}_{R'_i,s_i}$ for any sequence $s_i\downarrow0$ so that $s_i>\max\{\ln\bar\lambda_i,\bar\epsilon_i\}$, obtaining that $[M_i,f_i,x_i]\to[\widehat M,\hat x,\hat f]$ as $i\to\infty$ in $\widehat\MM_*^\infty(n)$.
\end{proof}

\begin{cor}\label{c: widehat MM_*^infty(n) is Polish}
  	$\widehat\MM_*^\infty(n)$ is Polish.
\end{cor}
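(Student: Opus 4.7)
The strategy is simply to observe that the corollary is a packaging of the two preceding propositions, together with the standard fact that a topological space is Polish if and only if it is separable and completely metrizable. So the plan is to invoke Proposition~\ref{p: widehat MM_*^infty(n) is separable} for separability and Proposition~\ref{p: widehat MM_*^infty(n) is completely metrizable} for the existence of a compatible complete metric.

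In more detail, Proposition~\ref{p: widehat MM_*^infty(n) is completely metrizable} says that the $C^\infty$ uniformity on $\widehat\MM_*(n)$ is metrizable and complete. Metrizability yields a metric $d$ generating that uniformity, and by Corollary~\ref{c: C^infty convergence in MM_*(n)} the same $d$ generates the $C^\infty$ topology, so it metrizes $\widehat\MM_*^\infty(n)$. Completeness of the uniformity then implies completeness of $d$, because any metric inducing a given uniformity has metric Cauchy sequences coinciding with uniform Cauchy sequences (and convergent filters coinciding with convergent nets). Combined with Proposition~\ref{p: widehat MM_*^infty(n) is separable}, this shows that $\widehat\MM_*^\infty(n)$ is separable and completely metrizable, hence Polish.

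The main obstacle is not in this corollary at all: the substantive work was done in Proposition~\ref{p: widehat MM_*^infty(n) is completely metrizable}, where the limit $[\widehat M,\hat f,\hat x]$ of a Cauchy sequence in $\widehat\MM_*^\infty(n)$ was constructed via the inverse-limit/gluing procedure for the quasi-equivalences $\psi_i$ and the verification that the resulting $\hat f$ satisfies the required $C^{m_i}$ estimates on each $\widehat\Omega_i$. Once that proposition and Proposition~\ref{p: widehat MM_*^infty(n) is separable} are in hand, the present statement is immediate.
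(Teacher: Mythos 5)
Your proposal is correct and is exactly the paper's own argument: the corollary is immediate from Proposition~\ref{p: widehat MM_*^infty(n) is separable} (separability) and Proposition~\ref{p: widehat MM_*^infty(n) is completely metrizable} (complete metrizability), together with Corollary~\ref{c: C^infty convergence in MM_*(n)} identifying the uniform topology with the $C^\infty$ topology.
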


\begin{proof}
  This is the content of Propositions~\ref{p: widehat MM_*^infty(n) is separable} and~\ref{p: widehat MM_*^infty(n) is completely metrizable} together.
\end{proof}
 
Corollaries~\ref{c: C^infty convergence in MM_*(n)} and~\ref{c: widehat MM_*^infty(n) is Polish} give Theorem~\ref{t: C^infty convergence in widehat MM_*(n)}.

\section{Foliated structure of $\widehat\MM_{*,\text{\rm imm}}^\infty(n)$}\label{s: foliated structure}

The properties stated in Theorem~\ref{t: widehat FF_*,imm(n)} are given by propositions of this section.

\begin{prop}\label{p: widehat MM_*,imm(n) is Polish}
	$\widehat\MM_{*,\text{\rm imm}}^\infty(n)$ is Polish.
\end{prop}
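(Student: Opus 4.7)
The plan is to realize $\widehat\MM_{*,\text{\rm imm}}^\infty(n)$ as a $G_\delta$ subset of $\widehat\MM_*^\infty(n)$ and then invoke the classical fact that a $G_\delta$ subset of a Polish space is itself Polish in the subspace topology. The ambient space is Polish by Corollary~\ref{c: widehat MM_*^infty(n) is Polish}, so this step alone will close the argument.

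For each $R>0$, I would introduce
\[
    V_R=\{\,[M,f,x]\in\widehat\MM_*^\infty(n)\mid f\ \text{is an immersion at every point of}\ \ol B_M(x,R)\,\}
\]
and check the countable decomposition
\[
    \widehat\MM_{*,\text{\rm imm}}^\infty(n)=\bigcap_{k\in\Z^+}V_k\;.
\]
Indeed, any point of $M$ lies in some closed ball $\ol B_M(x,k)$, so membership in every $V_k$ is equivalent to $f$ being an immersion on all of $M$. The task thus reduces to showing that each $V_R$ is open in $\widehat\MM_*^\infty(n)$.

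For the openness, I would fix $[M,f,x]\in V_R$ and set
\[
    \alpha=\inf\{\,|df_z(v)|\mid z\in\ol B_M(x,R),\ v\in T_zM,\ |v|=1\,\}\;,
\]
which is strictly positive by continuity of $df$ and compactness of the unit sphere bundle over $\ol B_M(x,R)$. Then, invoking Remark~\ref{r: widehat D^m_R,r}, any $[N,h,y]$ close enough to $[M,f,x]$ in $\widehat\MM_*^\infty(n)$ admits, for a prescribed $R'>R$ and $r'>0$ as small as wanted, a $C^\infty$ pointed local diffeomorphism $\phi\colon(M,x)\rightarrowtail(N,y)$ defined on a compact domain $\Omega\supset\ol B_M(x,R')$ with both $\|g_M-\phi^*g_N\|_{C^1,\Omega,g_M}<r'$ and $\|f-\phi^*h\|_{C^1,\Omega,g_M}<r'$. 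For $r'$ small and $R'$ only slightly larger than $R$, the first bound makes $\phi|_{\ol B_M(x,R')}$ a near-isometry, so $\phi(\ol B_M(x,R'))\supseteq\ol B_N(y,R)$; the second bound transfers the uniform lower bound $\alpha$ on $|df|$ to a uniform lower bound (say $\alpha/2$) on $|dh|$ over $\ol B_N(y,R)$. Hence $h$ is immersive on $\ol B_N(y,R)$, so $[N,h,y]\in V_R$.

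I expect the only subtle step to be this last transfer of the lower bound from $(M,f)$ to $(N,h)$: one must propagate $\|f-\phi^*h\|_{C^1}<r'$ to a bound on $|df-d(\phi^*h)|$, pull back unit vectors of $g_N$ through $\phi_*^{-1}$ while tracking that they are only approximately unit for $g_M$, and then combine these estimates with the lower bound $\alpha$. Each piece is a routine perturbation computation once the pointed local quasi-equivalence machinery of Sections~\ref{s: (partial) q.-e.} and~\ref{s: C^infty topology} is available, so the argument is really bookkeeping rather than a genuine new idea. Combining everything, $\widehat\MM_{*,\text{\rm imm}}^\infty(n)$ is a $G_\delta$ in the Polish space $\widehat\MM_*^\infty(n)$, hence itself Polish.
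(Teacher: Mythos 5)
Your proposal is correct and follows the same route as the paper: express $\widehat\MM_{*,\text{\rm imm}}^\infty(n)$ as a countable intersection of open subsets of $\widehat\MM_*^\infty(n)$ and invoke Corollary~\ref{c: widehat MM_*^infty(n) is Polish} together with the fact that a $G_\delta$ in a Polish space is Polish. The only difference is that the paper simply asserts the openness of its sets $\WW_R$ (defined via ``$f|_\Omega$ is an immersion for some compact domain $\Omega\supset B_M(x,R)$''), whereas you spell out the perturbation estimate showing the uniform lower bound on $|df|$ persists for nearby $[N,h,y]$ — a useful filling-in, not a different idea.
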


\begin{proof}
	For each $R>0$, let $\WW_R\subset\widehat\MM_*^\infty(n)$ be the open subset consisting of the points $[M,f,x]$ such that $f|_\Omega$ is an immersion for some compact domain $\Omega\subset M$ containing $B_M(x,R)$.  Then $\widehat\MM_{*,\text{\rm imm}}(n)=\bigcap_{R=1}^\infty\WW_R$ is a $G_\delta$ in $\widehat\MM_*^\infty(n)$. So $\widehat\MM_{*,\text{\rm imm}}^\infty(n)$ is a Polish space by Corollary~\ref{c: widehat MM_*^infty(n) is Polish} and \cite[Theorem~I.3.11]{Kechris1995}.
\end{proof}

\begin{prop}\label{p: density}
	$\widehat\MM_{*,\text{\rm imm,c}}^\infty(n)$ is dense in $\widehat\MM_{*,\text{\rm c}}^\infty(n)$.
\end{prop}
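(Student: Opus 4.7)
The plan is to reduce the statement to a $C^\infty$ density of immersions inside the ordinary Fr\'echet space $C^\infty(M,\E)$. Fix $[M,f,x]\in\widehat\MM_{*,\text{\rm c}}^\infty(n)$, so that $M$ is a compact connected Riemannian $n$-manifold. Using Definition~\ref{d: C^infty convergence in widehat MM_*(n)} with $\Omega=M$ and the pointed embedding $\phi_i=\id_M:(M,x)\to(M,x)$, any sequence $[M,\tilde f_i,x]$ converges to $[M,f,x]$ in $\widehat\MM_*^\infty(n)$ provided $\tilde f_i\to f$ in $C^\infty(M,\E)$; the metric requirement $\phi_i^*g\to g$ is automatic. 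So it will suffice to prove that $C^\infty_{\text{\rm imm}}(M,\E)$ is dense in $C^\infty(M,\E)$ whenever $M$ is a compact connected $n$-manifold.

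The case $n=0$ is automatic, since a smooth map from a single point is vacuously an immersion. For $n\ge 1$, I would fix an orthonormal system $e_1,\dots,e_N$ in $\E$ with $N\ge 2n+1$, set $V=\mathrm{span}\{e_1,\dots,e_N\}$, and write $\pi:\E\to V$ for the orthogonal projection. By the standard Whitney-type density of immersions for maps into $\R^N$ with $N\ge 2n+1$ \cite[Chapter~2]{Hirsch1976}, immersions $M\to V$ are $C^\infty$-dense in $C^\infty(M,V)$. Applied to $\pi\circ f\in C^\infty(M,V)$, this produces, for each $m\in\N$ and each $\epsilon>0$, an immersion $h:M\to V$ with $\|h-\pi\circ f\|_{C^m,M,g}<\epsilon$.

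I then set $\tilde f_{m,\epsilon}:=f+(h-\pi\circ f)\in C^\infty(M,\E)$. By construction $\|\tilde f_{m,\epsilon}-f\|_{C^m,M,g}<\epsilon$ and $\pi\circ\tilde f_{m,\epsilon}=h$. The key observation is that $\tilde f_{m,\epsilon}$ is itself an immersion into $\E$: if $d(\tilde f_{m,\epsilon})_x(v)=0$ for some $v\in T_xM$, post-composing with $\pi$ gives $dh_x(v)=0$, and injectivity of $dh_x$ forces $v=0$. A diagonal choice $\epsilon_i\downarrow 0$, $m_i\uparrow\infty$ produces a sequence $\tilde f_i\in C^\infty_{\text{\rm imm}}(M,\E)$ with $\tilde f_i\to f$ in the Fr\'echet topology of $C^\infty(M,\E)$, hence $[M,\tilde f_i,x]\in\widehat\MM_{*,\text{\rm imm,c}}^\infty(n)$ and $[M,\tilde f_i,x]\to[M,f,x]$ in $\widehat\MM_*^\infty(n)$, as required.

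The only slightly delicate point is ensuring that the Whitney perturbation $h-\pi\circ f$ takes values in the \emph{finite}-dimensional subspace $V$, so that adding it back to $f$ does not damage the $C^m$-approximation while still producing an immersion into $\E$ via the argument with $\pi$. The infinite dimensionality of $\E$ provides the required room, and I expect this bookkeeping (rather than any deep geometric fact) to be the main, but essentially routine, obstacle.
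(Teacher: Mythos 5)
Your argument is correct and follows essentially the same route as the paper: both reduce the question, via Definition~\ref{d: C^infty convergence in widehat MM_*(n)} with $\Omega=M$ and $\phi_i=\id_M$, to the $C^\infty$-density of $C^\infty_{\text{\rm imm}}(M,\E)$ in $C^\infty(M,\E)$ for compact $M$. The paper's proof first observes that $\widehat\MM_{*,\text{\rm c}}^\infty(n)$ decomposes as an open partition indexed by diffeomorphism types $M\in\CC$ (which you bypass by directly approximating any given $[M,f,x]$), and then simply cites Hirsch's Theorem~2.2.12 for the density of immersions. One small point in your favor: since $\E$ is an infinite-dimensional Hilbert space, Hirsch's theorem does not literally apply, and your explicit reduction --- projecting $\pi\circ f$ onto a finite-dimensional subspace $V\subset\E$ with $\dim V\ge 2n$, Whitney-perturbing inside $V$ to get an immersion $h$, and setting $\tilde f=f+(h-\pi\circ f)$ so that $\pi\circ\tilde f=h$ is an immersion and hence so is $\tilde f$ --- is exactly the ``easy'' argument the paper's citation leaves implicit. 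Minor remark: for \emph{immersions} (as opposed to embeddings) $\dim V\ge 2n$ already suffices; requiring $2n+1$ is harmless but not needed.
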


\begin{proof}
	With the notation of the proof of Proposition~\ref{p: widehat MM_*^infty(n) is separable}, $\widehat\MM_{*,\text{\rm c}}^\infty(n)$ has an open partition consisting of the subspaces
		\[
			\widehat\MM_*^\infty(M)=\{\,[M,f,x]\mid f\in C^\infty(M,\E),\ x\in M\,\}\quad(M\in\CC)\;.
		\]
	Thus it is enough to prove that each intersection $\widehat\MM_*^\infty(M)\cap\widehat\MM_{*,\text{\rm imm}}^\infty(n)$ is dense in $\widehat\MM_*^\infty(M)$. This means that $C^\infty_{\text{\rm imm}}(M,\E)$ is dense in $C^\infty(M,\E)$, which follows easily from \cite[Theorem~2.2.12]{Hirsch1976}.
\end{proof}

\begin{prop}\label{p: transitivity}
	There is a connected complete open Riemannian manifold $N$ and some $h\in C^\infty_{\text{\rm imm}}(N,\E)$ such that $\hat\iota_{N,h}$ is dense in $\widehat\MM_{*,\text{\rm imm,o}}^\infty(n)$.
\end{prop}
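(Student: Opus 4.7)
The plan is to construct $N$ by grafting countably many compact pieces of representatives of a dense subset onto a common spine via thin tubes, and to define $h$ using the infinite-dimensionality of $\E$.

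First, since $\widehat\MM_{*,\text{\rm imm,o}}^\infty(n)$ is Polish by Proposition~\ref{p: widehat MM_*,imm(n) is Polish}, it is separable. Fix a countable dense family $\{[M_i,f_i,x_i]\}_{i\in\N}$ with each $M_i$ open and $f_i\in C^\infty_{\text{\rm imm}}(M_i,\E)$. By repeating each entry infinitely often, we may assign radii $R_i\in\N$ so that, for every class $[M,f,x]$ appearing in the list, the subsequence of indices at which it reappears has $R_i\uparrow\infty$. For each $i$ pick a compact codimension-$0$ submanifold with smooth boundary $\Omega_i\subset M_i$ whose interior contains $\overline{B_{M_i}(x_i,R_i+1)}$, arranging that one boundary component $\Sigma_i\subset\partial\Omega_i$ is diffeomorphic to $S^{n-1}$; the non-compactness of $M_i$ leaves room for this mild enlargement.

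Next, assemble $N$. Take a fixed connected smooth open $n$-manifold $T$ with a countable discrete set of basepoints (for concreteness, $T=\R^n$ with integer lattice), remove a small open ball around each basepoint, and on each resulting boundary $S^{n-1}$ glue one end of a cylinder $C_i\cong S^{n-1}\times[0,1]$, attaching the other end to $\Sigma_i$. After smoothing the gluings, $N$ is a connected open $C^\infty$ $n$-manifold. Equip $N$ with a Riemannian metric $g_N$ that restricts to $g_{M_i}$ on each $\Omega_i$, to the Euclidean metric near $T$, and to a long cylindrical metric on each $C_i$ of length $\ge R_i+2$, so that the tubes never create shortcuts inside balls of radius $R_i$; choose these cylindrical metrics so that $(N,g_N)$ is complete. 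Define $h:N\to\E$ by setting $h|_{\Omega_i}=f_i$, fixing $h$ on a neighborhood of $T$ to be a smooth immersion, and on each tube $C_i$ interpolating smoothly between the prescribed boundary values while adding a term $\mu(t)\,e_i$ in a fresh direction $e_i$ of an orthonormal system $\{e_i\}_{i\in\N}\subset\E$, with $\mu(0)=\mu(1)=0$ and $\mu'(t)\neq0$ on $(0,1)$; if further needed, small transverse perturbations into additional fresh orthogonal directions of $\E$ ensure full rank of the differential across the tube. The infinite-dimensionality of $\E$ is essential here, providing an unlimited supply of orthogonal directions unused elsewhere.

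Finally, for each $i$ let $y_i\in N$ be the image of $x_i$ under the inclusion $\Omega_i\hookrightarrow N$. Because the cylinders have length $\ge R_i+2$, the ball $B_N(y_i,R_i)$ is contained in the image of the interior of $\Omega_i$, and the inclusion restricts to a pointed isometric $C^\infty$ embedding of $(B_{M_i}(x_i,R_i),x_i)$ into $(N,y_i)$ pulling $h$ back to $f_i$ on the nose. Thus $([M_i,f_i,x_i],[N,h,y_i])\in\widehat D^m_{R_i,r}$ for every $m\in\N$ and every $r>0$. By Remark~\ref{r: widehat D^m_R,r}, along any subsequence where a fixed class $[M,f,x]$ is repeated with $R_i\uparrow\infty$, $[N,h,y_i]\to[M,f,x]$ in $\widehat\MM_*^\infty(n)$, so $\hat\iota_{N,h}(N)$ accumulates on every point of the dense subset and is therefore dense in $\widehat\MM_{*,\text{\rm imm,o}}^\infty(n)$. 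The main obstacle is defining $h$ globally as a $C^\infty$ immersion while prescribing it on each $\Omega_i$; this is precisely what the infinite-dimensionality of $\E$ is used to overcome.
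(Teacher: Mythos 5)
Your approach is conceptually close to the paper's: both build $N$ by grafting pieces carrying a countable dense family of models, then read off density from the resulting isometric copies of arbitrarily large balls with the prescribed immersion. The paper works with the countable dense subset of $\widehat\MM_*^\infty(n)$ built in the proof of Proposition~\ref{p: widehat MM_*^infty(n) is separable}, whose entries are \emph{closed} manifolds, forms the infinite connected sum $M_0\,\#\,M_1\,\#\,\cdots$, and preserves the metric and map on the balls $B_i(x_i,r_i/2)$; density in $\widehat\MM_{*,\text{\rm imm,o}}^\infty(n)$ then comes from the (unstated but true) fact that open classes can be approximated by closed classes of arbitrarily large eccentricity. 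Your variant uses a dense family of \emph{open} models and a spine-plus-tubes assembly, and makes the growth of the good radii $R_i$ explicit by repetition; that part, together with the final pass from ``$\widehat D^m_{R_i,r}$ for all $m,r$'' via Remark~\ref{r: widehat D^m_R,r} to density, is correct. (Also note: what forces $B_N(y_i,R_i)\subset\Omega_i^\circ$ is not the tube length but the inclusion $\ol{B_{M_i}(x_i,R_i+1)}\subset\Omega_i^\circ$, since any path from $y_i$ leaving $\Omega_i$ must first reach $\partial\Omega_i$, which is at $M_i$-distance exceeding $R_i+1$ from $x_i$.)

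There are, however, two genuine defects in the construction of $N$. The first and more serious: your $\Omega_i$ is a compact codimension-$0$ submanifold of $M_i$ whose boundary may have several components, yet you only glue the distinguished sphere $\Sigma_i$ to a tube, leaving $\partial\Omega_i\setminus\Sigma_i$ free. The resulting $N$ then has nonempty boundary (or, if you pass to interiors, is incomplete), and in neither case is $N$ an admissible manifold for $\widehat\MM_*(n)$. The paper's use of closed $M_i$ avoids this entirely; to salvage your version you would need to replace each $\Omega_i$ by a closed-up or doubled variant with a single spherical boundary component (e.g.\ double $\Omega_i$ and remove a small ball away from $\ol{B_{M_i}(x_i,R_i+1)}$), or simply adopt the connected-sum route. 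Second, you posit $\mu\in C^\infty[0,1]$ with $\mu(0)=\mu(1)=0$ and $\mu'\neq 0$ on $(0,1)$, which is impossible by Rolle's theorem. This explicit device is unnecessary: the needed extension of $h$ to an element of $C^\infty_{\text{\rm imm}}(N,\E)$ agreeing with $f_i$ on each $\Omega_i$ is exactly what the relative extension and immersion-density theorems in \cite[Theorems~2.1.1 and~2.2.12]{Hirsch1976} provide (this is the paper's citation), and these already use the infinite codimension that you are invoking.
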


\begin{proof}
	In the proof of Proposition~\ref{p: widehat MM_*^infty(n) is separable}, we can assume that $\FF_M\subset C^\infty_{\text{\rm imm}}(M,\E)$ for each $M\in\CC$ by \cite[Theorem~2.2.12]{Hirsch1976}. Then the set~\eqref{countable dense subset}, denoted here by $\{\,[(M_i,g_i),f_i,x_i]\mid i\in\N\,\}$, is contained in $\widehat\MM_{*,\text{\rm imm}}^\infty(n)$.
	
	For every $i$, let $r_i=\max_{x\in M_i}d(x_i,x)$, and let $B_i=B_i(x_i,r_i/2)$ and $B'_i=B_i(x_i,2r_i/3)$. Let $N$ be a $C^\infty$ connected manifold obtained by modifying $\bigsqcup_iM_i$ on the complement of $\bigsqcup_i\ol{B'_i}$; for instance, we can take $N$ equal to the $C^\infty$ connected sum $M_0\,\#\,M_1\,\#\,\cdots$, constructed by removing balls in the sets $M_i\sm\ol{B'_i}$. Equip $N$ with a complete Riemannian metric $g_N$ whose restriction to each $B_i$ is $g_i$. For instance, we can take $g_N=\lambda g'+\mu g''$, where $\{\lambda,\mu\}$ is a $C^\infty$ partition of unity of $N$ subordinated to the open covering $\{\bigsqcup_iB'_i,N\sm\bigsqcup_i\ol{B_i}\}$, $g'$ is the combination of the metrics $g_i$ on $\bigsqcup_iB'_i$, and $g''$ is any complete metric on $N$. Form \cite[Theorems~2.1.1 and~2.2.12]{Hirsch1976}, it easily follows that there is some $h\in C^\infty_{\text{\rm imm}}(N,\E)$ whose restriction to each $B_i$ is $f_i$. It is easy to see that $N$ and $h$ satisfies the conditions of the statement.
\end{proof}

\begin{rem}
	The versions of Propositions~\ref{p: density} and~\ref{p: transitivity} with embeddings instead of immersions also hold by \cite[Theorems~2.1.4 and~2.2.13]{Hirsch1976}.
\end{rem}

To define foliated charts in $\widehat\MM_{*,\text{\rm imm}}^\infty(n)$, fix some $e\in\E$, and some linear subspace, $V\subset\E$, of dimension $n$. Let $\Pi_V:\E\to V$ denote the orthogonal projection. For each complete connected Riemannian manifold $M$ and any $f\in C^\infty_{\text{\rm imm}}(M,\E)$, let $\chi_{M,f}=\chi_{V,e,M,f}:M\to V$ be the $C^\infty$ map defined by $\chi_{M,f}(x)=\Pi_V(f(x)-e)$. Let $\chi=\chi_{V,e}:\MM_{*,\text{\rm imm}}^\infty(n)\to V$ be defined by $\chi([M,f,x])=\chi_{M,f}(x)$. 

\begin{lem}\label{l: chi is cont}
	$\chi$ is continuous
\end{lem}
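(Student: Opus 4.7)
The plan is to factor $\chi$ through the evaluation map $\ev\colon\widehat\MM_*^\infty(n)\to\E$, $[M,f,x]\mapsto f(x)$, which was introduced (and asserted continuous) in Section~\ref{s: intro}. Explicitly, $\chi([M,f,x])=\Pi_V(\ev([M,f,x])-e)$, so on $\widehat\MM_{*,\text{\rm imm}}^\infty(n)$ the map $\chi$ is the composition of $\ev$ with the continuous affine map $v\mapsto\Pi_V(v-e)$ from $\E$ to $V$. It is therefore enough to check that $\ev$ is continuous on $\widehat\MM_*^\infty(n)$ (the restriction to the Polish subspace $\widehat\MM_{*,\text{\rm imm}}^\infty(n)$ is then automatic).

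To verify continuity of $\ev$, I would argue sequentially, which is legitimate because $\widehat\MM_*^\infty(n)$ is metrizable by Proposition~\ref{p: widehat MM_*^infty(n) is completely metrizable}. Let $[M_i,f_i,x_i]\to[M,f,x]$ in $\widehat\MM_*^\infty(n)$ and pick any compact domain $\Omega\subset M$ containing $x$ (for instance, a small closed Riemannian ball around $x$, reshaped into a compact domain). By Definition~\ref{d: C^infty convergence in widehat MM_*(n)} applied with $m=0$, there exist, for all large $i$, pointed $C^1$ embeddings $\phi_i\colon(\Omega,x)\to(M_i,x_i)$ such that $\phi_i^*f_i\to f|_\Omega$ in $C^0(\Omega,\E)$. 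Using the pointed condition $\phi_i(x)=x_i$, evaluation at $x$ gives
\[
\ev([M_i,f_i,x_i])=f_i(x_i)=f_i(\phi_i(x))=(\phi_i^*f_i)(x)\longrightarrow f(x)=\ev([M,f,x]),
\]
since $C^0$ convergence on $\Omega$ implies pointwise convergence at $x\in\Omega$. Thus $\ev$ is sequentially continuous, hence continuous.

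Composing with the continuous affine map $v\mapsto\Pi_V(v-e)$ then yields continuity of $\chi$. There is no substantive obstacle: everything boils down to the fact that the pointed nature of the embeddings in Definition~\ref{d: C^infty convergence in widehat MM_*(n)} (together with $C^0$ control of $\phi_i^*f_i$) forces $f_i(x_i)$ to converge to $f(x)$, which is precisely what is needed to evaluate $\ev$ and then apply the bounded linear projection $\Pi_V$.
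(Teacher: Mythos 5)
Your proof is correct and follows essentially the same route as the paper: factor $\chi$ as $\Pi_V\circ(-e)\circ\ev$ and invoke continuity of the evaluation map. The only difference is that the paper simply cites the continuity of $\ev$ (stated when $\ev$ was introduced in Section~\ref{s: intro}) without rederiving it, whereas you supply the short sequential verification via Definition~\ref{d: C^infty convergence in widehat MM_*(n)} with $m=0$ — a worthwhile addition, but not a different approach.
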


\begin{proof}
	The map $\chi$ equals the following composite of continuous maps:
		\begin{equation}\label{chi}
			\begin{CD}
				\MM_{*,\text{\rm imm}}^\infty(n) @>{\ev}>> \E @>{-e}>> \E @>{\Pi_V}>> V\;,
			\end{CD}
		\end{equation}
	where the translation by $-e$ in $\E$ is also denoted by $-e$.
\end{proof}

Given $\rho,\sigma>0$ and $\kappa>1$, let $B=B_V(0,\sigma)$, and consider the following subsets of $\widehat\MM_{*,\text{\rm imm}}^\infty(n)$:
	\begin{itemize}
	
		\item $\NN_0=\NN_0(V,e,\rho,\kappa,\sigma)$ consists of the classes $[M,f,x]\in\widehat\MM_{*,\text{\rm imm}}^\infty(n)$ such that $\chi_{M,f}:B_M(x,\tilde\rho)\to V$ is a $\tilde\kappa$-quasi-isometric embedding for some $\tilde\rho>5\rho+\kappa\sigma$ and $\tilde\kappa\in(1,\kappa)$, and $\ol B\subset\chi_{M,f}(B_M(x,\rho))$. 
		
		\item $\NN_1=\NN_1(V,e,\rho,\kappa,\sigma)$ consists of the classes $[M,f,x]\in\widehat\MM_{*,\text{\rm imm}}^\infty(n)$ such that $[M,f,x']\in\NN_0$ for some $x'\in B_M(x,\rho)$.
		
		\item $\NN_2=\NN_2(V,e,\rho,\kappa,\sigma):=\NN_1\cap\chi^{-1}(B)$.
		
	\end{itemize}
Using \cite[Lemma~2.1.3]{Hirsch1976}, it easily follows that, for each $i\in\{0,1,2\}$, the sets $\NN_i(V,e,\rho,\kappa,\sigma)$ form an open covering of $\MM_{*,\text{\rm imm}}^\infty(n)$ by varying $(V,e,\rho,\kappa,\sigma)$. 

\begin{lem}\label{l: chi_M,f : B_M(x,4 rho + kappa sigma) to V}
	$\chi_{M,f}:B_M(x,4\rho+\kappa\sigma)\to V$ is an embedding for all $[M,f,x]\in\NN_1$.
\end{lem}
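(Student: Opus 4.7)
The plan is to reduce to the definition of $\NN_0$ via a triangle inequality argument. Let $[M,f,x]\in\NN_1$, so by definition there exists some $x'\in B_M(x,\rho)$ with $[M,f,x']\in\NN_0$. Thus we obtain parameters $\tilde\rho>5\rho+\kappa\sigma$ and $\tilde\kappa\in(1,\kappa)$ such that
\[
\chi_{M,f}:B_M(x',\tilde\rho)\to V
\]
is a $\tilde\kappa$-quasi-isometric embedding; in particular, it is an embedding (a smooth injective immersion which is a homeomorphism onto its image).

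Next, I would establish the inclusion $B_M(x,4\rho+\kappa\sigma)\subset B_M(x',\tilde\rho)$. Since $d_M(x,x')<\rho$, any $y\in B_M(x,4\rho+\kappa\sigma)$ satisfies
\[
d_M(y,x')\le d_M(y,x)+d_M(x,x')<(4\rho+\kappa\sigma)+\rho=5\rho+\kappa\sigma<\tilde\rho,
\]
so $y\in B_M(x',\tilde\rho)$, as required.

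Finally, I would conclude using the general fact that the restriction of an embedding to an open subset of its domain is again an embedding. Applying this to the open subset $B_M(x,4\rho+\kappa\sigma)$ of $B_M(x',\tilde\rho)$ gives that $\chi_{M,f}:B_M(x,4\rho+\kappa\sigma)\to V$ is an embedding, as claimed. I do not anticipate any serious obstacle here: the whole content of the lemma is in unwinding the definition of $\NN_1$ so as to recenter on $x'$ and invoke the quasi-isometric embedding property from $\NN_0$, with the specific constant $4\rho+\kappa\sigma$ engineered precisely so that $5\rho+\kappa\sigma<\tilde\rho$ is available.
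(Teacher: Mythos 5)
Your proof is correct and follows essentially the same route as the paper's: unwind the definition of $\NN_1$ to find $x'\in B_M(x,\rho)$ with $[M,f,x']\in\NN_0$, apply the triangle inequality to show $B_M(x,4\rho+\kappa\sigma)\subset B_M(x',\tilde\rho)$, and conclude by restricting the embedding $\chi_{M,f}:B_M(x',\tilde\rho)\to V$ guaranteed by $\NN_0$.
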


\begin{proof}
	For each $[M,f,x]\in\NN_1$, take some $x'\in B_M(x,\rho)$ so that $[M,f,x']\in\NN_0$. Then $B_M(x,4\rho+\kappa\sigma)\subset B_M(x',5\rho+\kappa\sigma)$ and $\chi_{M,f}:B_M(x',5\rho+\kappa\sigma)\to V$ is an embedding.
\end{proof}

Let $\ZZ=\NN_1\cap\chi^{-1}(0)$, which is closed in $\NN_2$.  For each $[M,f,x]\in\NN_1$, there is some $x'\in B_M(x,\rho)$ so that $[M,f,x']\in\NN_0$. Then there is some $x''\in B_M(x',\rho)$ such that $\chi_{M,f}(x'')=0$. Observe that $[M,f,x'']\in\NN_1$, and therefore $[M,f,x'']\in\ZZ$. By Lemma~\ref{l: chi_M,f : B_M(x,4 rho + kappa sigma) to V}, $x''$ is the unique point in $B_M(x,2\rho)$ such that $\chi_{M,f}(x'')=0$. Thus the class $[M,f,x'']$ depends only on $[M,f,x]$. So a map $\Theta:\NN_1\to\ZZ$ is well defined by setting $\Theta([M,f,x])=[M,f,x'']$. 

\begin{lem}\label{l: Theta is cont}
	$\Theta$ is continuous.
\end{lem}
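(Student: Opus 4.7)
I must show that if $[M_i,f_i,x_i]\to[M,f,x]$ in $\NN_1$, then $[M_i,f_i,x''_i]\to[M,f,x'']$ in $\ZZ$, where $x''$ (respectively $x''_i$) denotes the unique point in $B_M(x,2\rho)$ (respectively in $B_{M_i}(x_i,2\rho)$) at which $\chi_{M,f}$ (respectively $\chi_{M_i,f_i}$) vanishes. These points exist and are unique for $i$ large because $\NN_1$ is open, and by Lemma~\ref{l: chi_M,f : B_M(x,4 rho + kappa sigma) to V} applied both to $[M,f,x]$ and to $[M_i,f_i,x_i]$.

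By Corollary~\ref{c: C^infty convergence in MM_*(n)} and Definition~\ref{d: widehat U^m_R,r}, for any fixed $m\in\N$, $R>0$ and $r>0$ I may take, for every sufficiently large $i$, an $(m,R,\lambda_i,\epsilon_i)$-pointed local quasi-equivalence $\phi_i:(M,f,x)\rightarrowtail(M_i,f_i,x_i)$ with $\lambda_i\to1$ and $\epsilon_i\to0$. Let $y_i:=\phi_i(x'')$, which is defined once $R>2\rho$. Since
\[
\chi_{M_i,f_i}(y_i)-\chi_{M,f}(x'')=\Pi_V\bigl(\phi_i^*f_i(x'')-f(x'')\bigr),
\]
and since Proposition~\ref{p: widehat U^m_R,r'(M,f,x) subset widehat D^m_R,r(M,f,x)}\eqref{i: widehat U^m_R,r'(M,f,x) subset widehat D^m_R,r(M,f,x)} together with Remark~\ref{r: widehat D^m_R,r} yields $\|\phi_i^*f_i-f\|_{C^0,\Omega,g}\to0$ on any fixed compact domain $\Omega\supset B_M(x,R)$, I conclude $\chi_{M_i,f_i}(y_i)\to0$. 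Openness of $\NN_1$ forces $[M_i,f_i,x_i]\in\NN_1$ for $i$ large, hence there is $x'_i\in B_{M_i}(x_i,\rho)$ around which $\chi_{M_i,f_i}$ is a $\tilde\kappa_i$-quasi-isometric embedding on $B_{M_i}(x'_i,\tilde\rho_i)$ with $\tilde\kappa_i<\kappa$ and $\tilde\rho_i>5\rho+\kappa\sigma$. Because $d_{M_i}(x'_i,y_i)\le\rho+2\lambda_i\rho$ and $d_{M_i}(x'_i,x''_i)\le3\rho$, both $y_i$ and $x''_i$ lie in $B_{M_i}(x'_i,5\rho)$ for large $i$, and therefore
\[
d_{M_i}(y_i,x''_i)\le\tilde\kappa_i\,|\chi_{M_i,f_i}(y_i)-\chi_{M_i,f_i}(x''_i)|=\tilde\kappa_i\,|\chi_{M_i,f_i}(y_i)|\longrightarrow0.
\]

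The remaining step, and the main technical obstacle, is to upgrade $\phi_i$ (which sends $x''$ to the wrong point $y_i$) to a pointed local quasi-equivalence $(M,f,x'')\rightarrowtail(M_i,f_i,x''_i)$. I plan to build a $C^\infty$ diffeomorphism $\theta_i$ of $M_i$, compactly supported in a small geodesic ball about $y_i$, with $\theta_i(y_i)=x''_i$ and $\theta_i\to\id_{M_i}$ in the $C^{m+1}$ topology. The ingredients are the uniform $C^m$-closeness of $g_i$ to $(\phi_i^{-1})^*g$ on a neighbourhood of $y_i$ supplied by Proposition~\ref{p: widehat U^m_R,r'(M,f,x) subset widehat D^m_R,r(M,f,x)}\eqref{i: widehat U^m_R,r'(M,f,x) subset widehat D^m_R,r(M,f,x)} (which yields a uniform lower bound on the injectivity radius at $y_i$ and uniform $C^m$ bounds for $g_i$ in normal coordinates at $y_i$) together with $|v_i|:=d_{M_i}(y_i,x''_i)\to0$. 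Concretely, in normal coordinates at $y_i$ the point $x''_i$ has coordinates $v_i=\exp_{y_i}^{-1}(x''_i)$, and I take $\theta_i$ to be the map given in those coordinates by $z\mapsto z+\eta(|z|)v_i$, where $\eta$ is a fixed compactly supported bump function equal to $1$ near $0$; smallness of $|v_i|$ combined with the uniform normal-coordinate control forces $\theta_i-\id$ to be small in $C^{m+1}$, so $\theta_i:(M_i,f_i,y_i)\rightarrowtail(M_i,f_i,x''_i)$ is a pointed local quasi-equivalence of type $(m,R',\lambda''_i,\epsilon''_i)$ with $\lambda''_i\to1$ and $\epsilon''_i\to0$.

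Composing via Lemma~\ref{l: composition and inversion of pointed local quasi-equivalences}\eqref{i: composition}, the map $\psi_i:=\theta_i\circ\phi_i:(M,f,x'')\rightarrowtail(M_i,f_i,x''_i)$ is a pointed local quasi-equivalence of type $(m,R''_i,\lambda'''_i,\epsilon'''_i)$ with $R''_i\to\infty$, $\lambda'''_i\to1$, and $\epsilon'''_i\to0$. By Definition~\ref{d: widehat U^m_R,r}, this gives $[M_i,f_i,x''_i]\to[M,f,x'']$ in $\widehat\MM_{*,\text{\rm imm}}^\infty(n)$, establishing the continuity of $\Theta$. The hardest part is the quantitative construction and $C^{m+1}$-estimation of the basepoint-correcting diffeomorphism $\theta_i$, which relies crucially on the uniform geometric control of $(M_i,g_i)$ near $y_i$ provided by the $C^\infty$ convergence.
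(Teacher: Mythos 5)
Your proof is correct in outline, but it takes a genuinely different route from the paper's, and the difference is worth noting because it affects where the technical weight falls. Both proofs fix $m,R,r$ and pick an $(m,R,\lambda_i,\epsilon_i)$-pointed local quasi-equivalence $\phi_i:(M,f,x)\rightarrowtail(M_i,f_i,x_i)$, and both reduce the problem to correcting a base-point mismatch. The divergence is the side on which the correction is made. You push $x''$ forward to $y_i=\phi_i(x'')\in M_i$, estimate $d_{M_i}(y_i,x''_i)\to 0$, and then propose to build a compactly supported diffeomorphism $\theta_i$ of $M_i$ (in $g_i$-normal coordinates at $y_i$) moving $y_i$ to $x''_i$, with $\theta_i\to\id$ in $C^{m+1}$, so that $\theta_i\circ\phi_i$ does the job. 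The paper instead pulls $x_i'$ \emph{back} to $\bar x_i'=\phi_i^{-1}(x_i')\in M$, shows $d_M(x',\bar x_i')<\kappa r$ by the same quasi-isometric-embedding estimate you used, and then invokes the continuity of the \emph{fixed} map $\hat\iota_{M,f}:M\to\widehat\MM_*^\infty(n)$ to produce, for any $S,s$, a pointed local quasi-equivalence $\psi_i:(M,f,x')\rightarrowtail(M,f,\bar x_i')$ of type $(m,S,\mu_i,\delta_i)$ with $\mu_i\to1,\ \delta_i\to0$; composing $\phi_i\circ\psi_i$ finishes.

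The paper's route is cleaner precisely at the point you call ``the main technical obstacle.'' Working in the single fixed manifold $M$, one may simply cite the already-asserted continuity of $\hat\iota_{M,f}$, with no uniformity in $i$ to track. Your version has to manufacture the base-point-correcting map in a manifold $M_i$ whose geometry varies with $i$, so you must extract uniform (in $i$) lower bounds on the injectivity radius of $g_i$ at $y_i$ and uniform $C^{m+1}$ bounds for $g_i$ and $f_i$ in those normal coordinates, then verify that the bump construction yields $\theta_i\to\id$ in $C^{m+1}$ with the $(\lambda_i'',\epsilon_i'')$-bounds required by Definition~\ref{d: quasi-equivalence of order m} and Lemma~\ref{l: composition and inversion of pointed local quasi-equivalences}. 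These estimates do follow from the $C^\infty$ convergence $[M_i,f_i,x_i]\to[M,f,x]$, so there is no error of principle, but this block is sketched rather than carried out, and it is not negligible. Had you instead pulled $x_i'$ back into $M$ (where $\bar x_i'\to x'$) and cited continuity of $\hat\iota_{M,f}$, the same estimate $d_M(x',\bar x_i')<\kappa r$ that you already know how to prove would have sufficed and the hard step would have disappeared. I would encourage you to either complete the $\theta_i$ estimates in full, or to switch to the fixed-manifold correction, which amounts to the same information packaged so that the technical work is localized to a fact already on record.
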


\begin{proof}
	Consider a convergent sequence $[M_i,f_i,x_i]\to[M,f,x]$ in $\NN_1$. Take points $x_i'\in B_i(x_i,2\rho)$ and $x'\in B_M(x',2\rho)$ such that $\chi_{M_i,f_i}(x_i')=\chi_{M,f}(x')=0$. Thus $\Theta([M_i,f_i,x_i])=[M_i,f_i,x_i']$ and $\Theta([M,f,x])=[M,f,x']$.
	
	Given $m\in\N$ and $R,r>0$, for $i$ large enough, there is an $(m,R,\lambda_i,\epsilon_i)$-pointed local quasi-equivalence $\phi_i:(M,f,x)\rightarrowtail(M_i,f_i,x_i)$ for some $\lambda_i\in(1,e^r)$ and $\epsilon_i\in(0,r)$. Suppose that $R>3\rho$ and $e^r<3/2$; in particular, $\ol B_M(x,3\rho)\subset\dom\phi_i$. 
	
	\begin{claim}\label{cl: B_i(x_i, 2 rho_0) subset phi_i(B_M(x,rho))}
		$B_i(x_i,2\rho)\subset\phi_i(B_M(x,3\rho))$.
	\end{claim}
	
	The set $A=B_i(x_i,2\rho)\cap\phi_i(B_M(x,3\rho))$ contains $x_i$ and is open in the connected space $B_i(x_i,2\rho)$. Then Claim~\ref{cl: B_i(x_i, 2 rho_0) subset phi_i(B_M(x,rho))} follows by showing that $A$ is also closed in $B_i(x_i,2\rho)$. This holds since $A=B_i(x_i,2\rho)\cap\phi_i(\ol B_M(x,3\rho))$ because, for every $y\in M$ with $d_M(x,y)=3\rho$, we have
		\[
			d_i(x_i,\phi_i(y))\ge\frac{1}{\lambda_i}d_M(x,y)>3\rho e^{-r}>2\rho\;.
		\]
	
	According to Claim~\ref{cl: B_i(x_i, 2 rho_0) subset phi_i(B_M(x,rho))}, there is some $\bar x_i'\in B_M(x,3\rho)$ such that $\phi_i(\bar x_i')=x_i'$. We have
		\begin{multline*}
			d_M(x',\bar x_i')\le\kappa\|\chi_{M,f}(x')-\chi_{M,f}(\bar x_i')\|
			=\kappa\|\chi_{M,f}(\bar x_i')-\chi_{M_i,f_i}(x_i')\|\\
			\le\kappa\|f(\bar x_i')-f_i(x_i')\|=\kappa\|f(\bar x_i')-f_i\circ\phi(\bar x_i')\|
			<\kappa\epsilon_i<\kappa r\;.
		\end{multline*}
	Therefore, by the continuity of $\hat\iota_{M,f}$, for any $S,s>0$, if $r$ is small enough and $i$ large enough, there is an $(m,S,\mu_i,\delta_i)$-pointed local quasi-equivalence $\psi_i:(M,f,x')\rightarrowtail(M,f,\bar x_i')$ with $\mu_i\in(1,e^{s/2})$ and $\delta_i\in(0,s/2)$. On the other hand, observe that $\phi_i:(M,\bar x_i',f)\rightarrowtail(M_i,f_i,x_i')$ is an $(m,R-2\rho,\lambda_i,\epsilon_i)$-pointed local quasi-equivalence. Hence, if moreover $R>e^{s/2}S+2\rho$ and $r<s/2$, we get that $\phi_i\circ\psi_i:(M,f,x')\rightarrowtail(M_i,f_i,x_i')$ is an $(m,S,\mu_i\lambda_i,\delta_i+\epsilon_i)$-pointed local quasi-equivalence with $\mu_i\lambda_i\in(1,e^s)$ and $\delta_i+\epsilon_i\in(0,s)$ by Lemma~\ref{l: composition and inversion of pointed local quasi-equivalences}-\eqref{i: composition}. This shows that $[M_i,f_i,x_i']\to[M,f,x']$ in $\widehat\MM_*^\infty(n)$.
\end{proof}

Let $\Phi=(\chi,\Theta):\NN_2\to B\times\ZZ$.

\begin{lem}\label{l: Phi^-1}
	$\Phi$ is bijective, and $\Phi^{-1}(v,[M,f,x])=[M,f,x']$ for each $(v,[M,f,x])\in B\times\ZZ$, where $x'$ is the unique point in $B_M(x,2\rho)\cap\chi_{M,f}^{-1}(v)$.
\end{lem}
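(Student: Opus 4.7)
The plan is to construct the inverse map explicitly and check bijectivity by verifying that the stated formula compositionally reverses $\Phi$ on both sides. The heart of the argument is the interplay between the existence clause ($\ol B \subset \chi_{M,f}(B_M(x_0,\rho))$) coming from membership in $\NN_0$ and the uniqueness clause provided by Lemma~\ref{l: chi_M,f : B_M(x,4 rho + kappa sigma) to V}.

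First I would establish existence and uniqueness of $x'$. Given $(v,[M,f,x]) \in B\times\ZZ$, since $[M,f,x]\in\NN_1$ there is some $x_0 \in B_M(x,\rho)$ with $[M,f,x_0]\in\NN_0$; by the defining property of $\NN_0$ we have $\ol B \subset \chi_{M,f}(B_M(x_0,\rho))$, so there exists $x' \in B_M(x_0,\rho)$ with $\chi_{M,f}(x') = v$. The triangle inequality gives $x' \in B_M(x,2\rho)$. For uniqueness, observe that $B_M(x,2\rho)\subset B_M(x,4\rho+\kappa\sigma)$, and by Lemma~\ref{l: chi_M,f : B_M(x,4 rho + kappa sigma) to V} the map $\chi_{M,f}$ is injective on this larger ball, so $x'$ is the unique preimage of $v$ in $B_M(x,2\rho)$. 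I would then note that $[M,f,x'] \in \NN_1$ (because the same witness $x_0 \in B_M(x',2\rho)$ with $[M,f,x_0]\in\NN_0$ works, as $x' \in B_M(x_0,\rho)$ implies $x_0 \in B_M(x',\rho)$) and $\chi([M,f,x']) = v \in B$, hence $[M,f,x']\in\NN_2$.

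Next I would verify that the assignment $(v,[M,f,x])\mapsto[M,f,x']$ is well defined on equivalence classes. If $\phi:(M,f,x)\to(N,h,y)$ is a pointed equivalence with $[M,f,x]=[N,h,y]\in\ZZ$, then $\chi_{M,f} = \chi_{N,h}\circ\phi$ since $\phi$ is an isometry with $\phi^*h=f$, so $\phi$ carries the unique $x'\in B_M(x,2\rho)\cap\chi_{M,f}^{-1}(v)$ to the unique $y'\in B_N(y,2\rho)\cap\chi_{N,h}^{-1}(v)$, giving $[M,f,x']=[N,h,y']$.

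Finally I would check the two compositions. For $\Phi\circ\Phi^{-1}$: applied to $(v,[M,f,x])$, we obtain $[M,f,x']$ with $\chi([M,f,x'])=v$; and $\Theta([M,f,x'])$ is the unique point in $B_M(x',2\rho)\cap\chi_{M,f}^{-1}(0)$, which must equal $x$ since $x$ satisfies both conditions and the uniqueness from Lemma~\ref{l: chi_M,f : B_M(x,4 rho + kappa sigma) to V} applies. For $\Phi^{-1}\circ\Phi$: applied to $[M,f,y]\in\NN_2$ with $\Phi([M,f,y])=(v,[M,f,x])$ where $v=\chi_{M,f}(y)$ and $x$ is the unique zero in $B_M(y,2\rho)$, we recover $[M,f,y']$ with $y'$ the unique point in $B_M(x,2\rho)\cap\chi_{M,f}^{-1}(v)$; but $y$ satisfies these conditions, so $y'=y$. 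The only subtlety anywhere is the repeated appeal to injectivity of $\chi_{M,f}$ on a ball large enough to contain all the points that arise, which is precisely what Lemma~\ref{l: chi_M,f : B_M(x,4 rho + kappa sigma) to V} guarantees.
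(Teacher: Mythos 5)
Your proof is correct and follows essentially the same route as the paper: existence of $x'$ via the $\NN_0$ containment $\ol B\subset\chi_{M,f}(B_M(x_0,\rho))$, uniqueness via the embedding property of Lemma~\ref{l: chi_M,f : B_M(x,4 rho + kappa sigma) to V}, and membership of $[M,f,x']$ in $\NN_2$. The only difference is organizational—you build the inverse map and verify both compositions, while the paper proves injectivity and surjectivity of $\Phi$ separately—but the underlying argument and the lemmas invoked are the same.
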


\begin{proof}
	To prove that $\Phi$ is injective, let $[M_i,f_i,x_i]\in\NN_2$ ($i\in\{1,2\}$) such that $\Phi([M_1,f_1,x_1])=\Phi([M_2,f_2,x_2])$; i.e., $\chi_{M_1,f_1}(x_1)=\chi_{M_2,f_2}(x_2)$ and $[M_1,f_1,x_1']=[M_2,f_2,x_2']$ for points $x_i'\in B_i(x_i,2\rho)$ with $\chi_{M_i,f_i}(x_i')=0$. Thus there is a pointed equivalence $\phi:(M_1,f_1,x_1')\to(M_2,f_2,x_2')$. We get $\phi(x_1)=x_2$ because $\chi_{M_2,f_2}\circ\phi(x_1)=\chi_{M_1,f_1}(x_1)=\chi_{M_2,f_2}(x_2)$, the map $\chi_{M_i,f_i}:(B_i(x_i',2\rho),x_i')\to(V,0)$ is a pointed embedding (Lemma~\ref{l: chi_M,f : B_M(x,4 rho + kappa sigma) to V}), and $x_i\in B_i(x_i',2\rho)$. So $\phi:(M_1,f_1,x_1)\to(M_2,f_2,x_2)$ is a pointed equivalence, and therefore $[M_1,f_1,x_1]=[M_2,f_2,x_2]$.
	
	Now, let us prove that $\Phi$ is surjective, showing the stated expression of $\Phi^{-1}$. Let $(v,[M,f,x])\in B\times\ZZ$. There is some $y\in B_M(x,\rho)$ such that $[M,f,y]\in\NN_0$. So there is some $x'\in B_M(y,\rho)$ such that $\chi_{M,f}(x')=v$. It follows that $[M,f,x']\in\NN_1$, $\Theta([M,f,x'])=[M,f,x]$ and $\chi([M,f,x'])=v$. Therefore $[M,f,x']\in\NN_2$ and $\Phi([M,f,x'])=(v,[M,f,x])$. Moreover $x'$ is the unique point in $B_M(x,2\rho)\cap\chi_{M,f}^{-1}(v)$ by Lemma~\ref{l: chi_M,f : B_M(x,4 rho + kappa sigma) to V}.
\end{proof}

\begin{lem}\label{l: Phi^-1 is cont}
	$\Phi^{-1}$ is continuous.
\end{lem}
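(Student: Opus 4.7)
The plan is to adapt the proof of Lemma~\ref{l: Theta is cont} almost verbatim, the only substantive change being that the reference value $0\in V$ is replaced by the variable $v$. Let $(v_i,[M_i,f_i,x_i])$ converge to $(v,[M,f,x])$ in $B\times\ZZ$. By Lemma~\ref{l: Phi^-1}, $\Phi^{-1}(v_i,[M_i,f_i,x_i])=[M_i,f_i,x_i']$ with $x_i'\in B_i(x_i,2\rho)$ the unique preimage of $v_i$ under $\chi_{M_i,f_i}$ there, and similarly $\Phi^{-1}(v,[M,f,x])=[M,f,x']$; so it suffices to show $[M_i,f_i,x_i']\to[M,f,x']$ in $\widehat\MM_*^\infty(n)$.

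Given target parameters $m\in\N$ and $S,s>0$ for which I wish to force $[M_i,f_i,x_i']\in\widehat U^m_{S,s}(M,f,x')$, I choose auxiliary constants $R>3\rho$ and $r\in(0,s/2)$ sufficiently small in terms of $S,s,\rho,\kappa$. Convergence in $\ZZ\subset\NN_1$ then yields, for $i$ large enough, $(m,R,\lambda_i,\epsilon_i)$-pointed local quasi-equivalences $\phi_i:(M,f,x)\rightarrowtail(M_i,f_i,x_i)$ with $\lambda_i\in(1,e^r)$ and $\epsilon_i\in(0,r)$. The argument of Claim~\ref{cl: B_i(x_i, 2 rho_0) subset phi_i(B_M(x,rho))} carries over unchanged and produces $\bar x_i'\in B_M(x,3\rho)$ with $\phi_i(\bar x_i')=x_i'$.

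The heart of the proof is then the estimate of $d_M(x',\bar x_i')$. Since $[M,f,x]\in\NN_1$, some point $y\in B_M(x,\rho)$ lies in $\NN_0$, so $\chi_{M,f}$ is a $\tilde\kappa$-quasi-isometric embedding on $B_M(y,\tilde\rho)$ with $\tilde\rho>5\rho+\kappa\sigma$, and both $x'$ and $\bar x_i'$ lie in this ball. Combining the factorization~\eqref{chi} of $\chi$ through $\Pi_V$ with the quasi-equivalence bound $\|f-\phi_i^*f_i\|\le\epsilon_i$ gives $\|\chi_{M,f}(\bar x_i')-v_i\|=\|\Pi_V(f(\bar x_i')-f_i(\phi_i(\bar x_i')))\|\le\epsilon_i$, whence
\[
    d_M(x',\bar x_i') \le \tilde\kappa\,\|v-\chi_{M,f}(\bar x_i')\| \le \tilde\kappa\,(\|v-v_i\|+\epsilon_i),
\]
which is arbitrarily small for $i$ large. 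Continuity of $\hat\iota_{M,f}$ at $x'$ then supplies pointed local quasi-equivalences $\psi_i:(M,f,x')\rightarrowtail(M,f,\bar x_i')$ of type $(m,S,\mu_i,\delta_i)$ with $\mu_i\in(1,e^{s/2})$ and $\delta_i\in(0,s/2)$; since $\phi_i$ restricts to an $(m,R-3\rho,\lambda_i,\epsilon_i)$-pointed local quasi-equivalence $(M,f,\bar x_i')\rightarrowtail(M_i,f_i,x_i')$, Lemma~\ref{l: composition and inversion of pointed local quasi-equivalences}-\eqref{i: composition} assembles $\phi_i\circ\psi_i$ into a pointed local quasi-equivalence $(M,f,x')\rightarrowtail(M_i,f_i,x_i')$ of type $(m,S,\mu_i\lambda_i,\delta_i+\epsilon_i)$ with $\mu_i\lambda_i<e^s$ and $\delta_i+\epsilon_i<s$, certifying the convergence.

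The only nontrivial technical point, beyond the careful bookkeeping of types under composition, is the displayed $\epsilon_i$-estimate: it is precisely the combination of the factorization of $\chi$ through $\Pi_V$ with the $\E$-valued quasi-equivalence bound that absorbs the perturbation $v_i\neq v$ and reduces the argument to the scheme already used for the fixed reference value $0$ in Lemma~\ref{l: Theta is cont}.
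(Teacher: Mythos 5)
Your proof is correct and follows the paper's argument essentially verbatim: the paper, like you, obtains $\bar x_i'$ from Claim~\ref{cl: B_i(x_i, 2 rho_0) subset phi_i(B_M(x,rho))}, derives the bound $d_M(x',\bar x_i')\le\kappa(\|v-v_i\|+\epsilon_i)$ from the factorization of $\chi$ through $\Pi_V$ and the $\E$-valued quasi-equivalence estimate, and then finishes exactly as in Lemma~\ref{l: Theta is cont}. The only deviations are cosmetic bookkeeping (your $R-3\rho$ and $\tilde\kappa$ in place of the paper's $R-2\rho$ and $\kappa$), and if anything your constants are the more careful ones.
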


\begin{proof}
	Consider a convergent sequence $(v_i,[M_i,f_i,x_i])\to(v,[M,f,x])$ in $B\times\ZZ$. Take points $x_i'\in B_i(x_i,2\rho)$ and $x'\in B_M(x,2\rho)$ such that $\chi_{M_i,f_i}(x_i')=v_i$ and $\chi_{M,f}(x')=v$. Thus $\Phi^{-1}(v_i,[M_i,f_i,x_i])=[M_i,f_i,x_i']$ and $\Phi^{-1}(v,[M,f,x])=[M,f,x']$.
	
	Given $m\in\N$ and $R,r>0$, if $i$ is large enough, then $\|v-v_i\|<r$, and there is an $(m,R,\lambda_i,\epsilon_i)$-pointed local quasi-equivalence $\phi_i:(M,f,x)\rightarrowtail(M_i,f_i,x_i)$ for some $\lambda_i\in(1,e^r)$ and $\epsilon_i\in(0,r)$. Suppose that $R>3\rho$ and $e^r<3/2$; in particular, $\ol B_M(x,3\rho)\subset\dom\phi_i$. Like in Claim~\ref{cl: B_i(x_i, 2 rho_0) subset phi_i(B_M(x,rho))}, we get $B_i(x_i,2\rho)\subset\phi_i(B_M(x,3\rho))$. Then, since $x_i'\in B_i(x_i,2\rho)$, there is some $\bar x_i'\in B_M(x,3\rho)$ such that $\phi_i(\bar x_i')=x_i'$. We have
		\begin{multline*}
			d_M(x',\bar x_i')\le\kappa\|\chi_{M,f}(x')-\chi_{M,f}(\bar x_i')\|
			\le\kappa\left(\|\chi_{M,f}(\bar x_i')-\chi_{M_i,f_i}(x_i')\|+\|v-v_i\|\right)\\
			<\kappa\left(\|f(\bar x_i')-f_i(x_i')\|+r\right)=\kappa\left(\|f(\bar x_i')-f_i\circ\phi(\bar x_i')\|+r\right)
			<\kappa(\epsilon_i+r)<2\kappa r\;.
		\end{multline*}
	Hence we get $[M_i,f_i,x_i']\to[M,f,x']$ in $\widehat\MM_*^\infty(n)$ like in the end of the proof of Lemma~\ref{l: Theta is cont}.
\end{proof}

\begin{cor}\label{c: Phi is a homeomorphism}
	$\Phi$ is a homeomorphism.
\end{cor}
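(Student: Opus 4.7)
The plan is simply to assemble the preceding four lemmas; no new work is needed. First I would observe that $\Phi=(\chi,\Theta):\NN_2\to B\times\ZZ$ is continuous because a map into a product is continuous iff each of its component maps is, and the components $\chi$ and $\Theta$ are continuous by Lemma~\ref{l: chi is cont} and Lemma~\ref{l: Theta is cont}, respectively. (The codomain $B\times\ZZ$ carries the product of the subspace topologies inherited from $V$ and $\widehat\MM_{*,\text{\rm imm}}^\infty(n)$.)

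Next, Lemma~\ref{l: Phi^-1} shows that $\Phi$ is a bijection and provides the explicit formula for $\Phi^{-1}$, while Lemma~\ref{l: Phi^-1 is cont} shows that this inverse is continuous. Putting these together, $\Phi$ is a continuous bijection with continuous inverse, hence a homeomorphism onto $B\times\ZZ$.

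There is no substantial obstacle here: the genuine content has already been absorbed by Lemmas~\ref{l: chi is cont}--\ref{l: Phi^-1 is cont}, which handle the nontrivial point, namely that passing from a sequence $[M_i,f_i,x_i]\to[M,f,x]$ in $\NN_1$ to the shifted basepoints $x_i',x'$ (uniquely determined inside $B_i(x_i,2\rho)$ and $B_M(x,2\rho)$ by the level condition on $\chi_{M,f}$) preserves $C^\infty$ convergence. Granted those lemmas, the corollary is just bookkeeping.
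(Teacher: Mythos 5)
Your proof is correct and matches the paper's exactly: the paper also simply cites Lemmas~\ref{l: chi is cont}, \ref{l: Theta is cont}, \ref{l: Phi^-1}, and~\ref{l: Phi^-1 is cont} to conclude that $\Phi$ is a continuous bijection with continuous inverse.
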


\begin{proof}
	This follows from Lemmas~\ref{l: chi is cont},~\ref{l: Theta is cont},~\ref{l: Phi^-1} and~\ref{l: Phi^-1 is cont}.
\end{proof}

\begin{lem}\label{l: NN_2}
	If $[M,f,x]\in\chi^{-1}(B)$ and $[M,f,x']\in\ZZ$ for some $x'\in B_M(x,2\rho)$, then $[M,f,x]\in\NN_2$.
\end{lem}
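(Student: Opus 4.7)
The plan is to establish $[M,f,x]\in\NN_1$ directly from the definitions, since $[M,f,x]\in\chi^{-1}(B)$ is already given; concretely, I will show that the point $x''$ arising from the $\NN_1$-witness at $x'$ lies in $B_M(x,\rho)$ and still serves as an $\NN_0$-witness at $x$.

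First, unpack the hypothesis $[M,f,x']\in\ZZ\subset\NN_1$: there exists $x''\in B_M(x',\rho)$ with $[M,f,x'']\in\NN_0$. This produces parameters $\tilde\rho>5\rho+\kappa\sigma$ and $\tilde\kappa\in(1,\kappa)$ such that $\chi_{M,f}\colon B_M(x'',\tilde\rho)\to V$ is a $\tilde\kappa$-quasi-isometric \emph{embedding} (hence injective on that ball), and $\ol B\subset\chi_{M,f}(B_M(x'',\rho))$.

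Second, locate a preimage: since $[M,f,x]\in\chi^{-1}(B)$ we have $\chi_{M,f}(x)\in B\subset\ol B\subset\chi_{M,f}(B_M(x'',\rho))$, so there is some $y\in B_M(x'',\rho)$ with $\chi_{M,f}(y)=\chi_{M,f}(x)$. The triangle inequality together with $x'\in B_M(x,2\rho)$ and $x''\in B_M(x',\rho)$ yields $d_M(x,x'')<3\rho<\tilde\rho$, so both $x$ and $y$ lie in $B_M(x'',\tilde\rho)$. Injectivity of $\chi_{M,f}$ on $B_M(x'',\tilde\rho)$ (guaranteed by the embedding condition in the definition of $\NN_0$) forces $x=y$, so $x\in B_M(x'',\rho)$, i.e.\ $x''\in B_M(x,\rho)$.

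Third, conclude: $[M,f,x'']\in\NN_0$ with $x''\in B_M(x,\rho)$ gives $[M,f,x]\in\NN_1$ by the very definition of $\NN_1$, and combined with the hypothesis $[M,f,x]\in\chi^{-1}(B)$ this yields $[M,f,x]\in\NN_1\cap\chi^{-1}(B)=\NN_2$. There is no serious obstacle here; the only delicate point is making sure the constants line up so that $x$ lies inside the injectivity ball $B_M(x'',\tilde\rho)$, which is immediate from $\tilde\rho>5\rho+\kappa\sigma>3\rho$. Notice also that the hypothesis $\chi_{M,f}(x')=0$ (built into $\ZZ$) is not actually used in this argument; only $x'\in B_M(x,2\rho)$ and $[M,f,x']\in\NN_1$ are needed.
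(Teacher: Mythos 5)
Your proof is correct, but it takes a more hands-on route than the paper's. The paper's own argument sets $v=\chi([M,f,x])\in B$, invokes the surjectivity half of Lemma~\ref{l: Phi^-1} on $(v,[M,f,x'])\in B\times\ZZ$ to produce the unique $x''\in B_M(x',2\rho)\cap\chi_{M,f}^{-1}(v)$ with $[M,f,x'']\in\NN_2$, and then concludes $x=x''$ because $\chi_{M,f}$ is injective on $B_M(x',4\rho+\kappa\sigma)\supset B_M(x',2\rho)$ by Lemma~\ref{l: chi_M,f : B_M(x,4 rho + kappa sigma) to V} and both $x,x''$ lie in $B_M(x',2\rho)$ with the same $\chi$-value. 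You instead bypass Lemma~\ref{l: Phi^-1} entirely: you unwind $[M,f,x']\in\NN_1$ to get an $\NN_0$-witness $x''\in B_M(x',\rho)$, use the surjectivity clause $\ol B\subset\chi_{M,f}(B_M(x'',\rho))$ of $\NN_0$ to find a preimage $y$ of $\chi_{M,f}(x)$, and then use the embedding clause of $\NN_0$ on the larger ball $B_M(x'',\tilde\rho)$ (with $3\rho<\tilde\rho$) to force $x=y\in B_M(x'',\rho)$, hence $[M,f,x]\in\NN_1$. Both are valid; yours is more elementary and self-contained, and, as you observe, it proves the marginally stronger statement with $\ZZ$ replaced by $\NN_1$ (the condition $\chi_{M,f}(x')=0$ is never used), whereas the paper's is shorter because it leans on the already-established bijectivity of $\Phi$.
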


\begin{proof}
	 Let $v=\chi([M,f,x])\in B$. By Lemma~\ref{l: Phi^-1}, there is some $x''\in B_M(x',2\rho)$ such that $[M,f,x'']\in\NN_2$ and $\Phi([M,f,x''])=(v,[M,f,x'])$. Then $x=x''$ by Lemma~\ref{l: chi_M,f : B_M(x,4 rho + kappa sigma) to V} applied to $\chi_{M,f}:B_M(x',2\rho)\to V$.
\end{proof}

Take $(\widetilde V,\tilde e,\tilde\rho,\tilde\kappa,\tilde\sigma)$ like $(V,e,\rho,\kappa,\sigma)$. Let $\widetilde\NN_i=\NN_i(\widetilde V,\tilde e,\tilde\rho,\tilde\kappa,\tilde\sigma)$ for $i\in\{0,1,2\}$, and let $\widetilde\Phi=(\widetilde\chi,\widetilde\Theta):\widetilde\NN_2\to\widetilde B\times\widetilde\ZZ$ be defined like $\Phi=(\chi,\Theta):\NN_2\to B\times\ZZ$, using $(\widetilde V,\tilde e,\tilde\rho,\tilde\kappa,\tilde\sigma)$. Moreover, for each $[M,f,x]\in\widehat\MM_*^\infty(n)$, let $\widetilde\chi_{M,f}:M\to\widetilde V$ be defined like $\chi_{M,f}:M\to V$, using $\Pi_{\widetilde V}$ and $\tilde e$. Suppose that $\NN_2\cap\widetilde\NN_2\ne\emptyset$, and consider the map $\widetilde\Phi\circ\Phi^{-1}:\Phi(\NN_2\cap\widetilde\NN_2)\to\widetilde\Phi(\NN_2\cap\widetilde\NN_2)$.

\begin{lem}\label{l: widetilde Phi circ Phi^-1}
	Let $(v,[M,f,x])\in\Phi(\NN_2\cap\widetilde\NN_2)$. Then $\widetilde\Phi\circ\Phi^{-1}(v,[M,f,x])=(\tilde v,[M,f,\tilde x])$, where $\tilde x\in\widetilde\chi_{M,f}^{-1}(0)$ is determined by the condition
		\begin{equation}\label{[M,tilde x,f]}
			B_M(x,2\rho)\cap B_M(\tilde x,2\tilde\rho)
			\cap\chi_{M,f}^{-1}(B)\cap\widetilde\chi_{M,f}^{-1}(\widetilde B)\ne\emptyset\;,
		\end{equation}
	and $\tilde v$ is the image of $v$ by the composite
		\begin{equation}\label{tilde v}
			\begin{CD}
				\chi_{M,f}(O) @>{\chi_{M,f}^{-1}}>> O @>{\widetilde\chi_{M,f}}>> \widetilde\chi_{M,f}(O)\;,
			\end{CD}
		\end{equation}
	where $O=B_M(x,2\rho)\cap B_M(\tilde x,2\tilde\rho)$.
\end{lem}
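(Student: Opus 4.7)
The plan is to unravel $\Phi^{-1}$ and $\widetilde\Phi$ via their defining formulas and match the output to the stated description. By Lemma~\ref{l: Phi^-1}, one has $\Phi^{-1}(v,[M,f,x])=[M,f,x']$ where $x'$ is the unique point of $B_M(x,2\rho)\cap\chi_{M,f}^{-1}(v)$. The assumption $(v,[M,f,x])\in\Phi(\NN_2\cap\widetilde\NN_2)$ puts $[M,f,x']$ in $\widetilde\NN_2$, so the defining formulas for $\widetilde\chi$ and $\widetilde\Theta$ yield
\[
    \widetilde\Phi([M,f,x'])=\bigl(\widetilde\chi_{M,f}(x'),\,[M,f,\tilde x]\bigr),
\]
with $\tilde x$ the unique point of $B_M(x',2\tilde\rho)\cap\widetilde\chi_{M,f}^{-1}(0)$ (uniqueness by Lemma~\ref{l: chi_M,f : B_M(x,4 rho + kappa sigma) to V} applied to $\widetilde\chi_{M,f}$, since $[M,f,x']\in\widetilde\NN_1$). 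Setting $\tilde v:=\widetilde\chi_{M,f}(x')$ then produces the target pair.

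Next I would verify the two claimed characterizations. Condition~\eqref{[M,tilde x,f]} holds with $x'$ as the witness: $x'\in B_M(x,2\rho)$ by Lemma~\ref{l: Phi^-1}; $x'\in B_M(\tilde x,2\tilde\rho)$ because $\tilde x\in B_M(x',2\tilde\rho)$; $\chi_{M,f}(x')=v\in B$ by construction; and $\widetilde\chi_{M,f}(x')=\tilde v\in\widetilde B$ because $[M,f,x']\in\widetilde\NN_2\subset\widetilde\chi^{-1}(\widetilde B)$. For formula~\eqref{tilde v}, since $[M,f,x]\in\ZZ\subset\NN_1$, Lemma~\ref{l: chi_M,f : B_M(x,4 rho + kappa sigma) to V} makes $\chi_{M,f}$ an embedding on $B_M(x,4\rho+\kappa\sigma)\supset O$; hence $x'\in O$ is the unique preimage of $v$ in $O$, and applying $\widetilde\chi_{M,f}$ gives $\widetilde\chi_{M,f}\circ\chi_{M,f}^{-1}(v)=\widetilde\chi_{M,f}(x')=\tilde v$.

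The remaining task is to show that $\tilde x$ is the only element of $\widetilde\chi_{M,f}^{-1}(0)$ satisfying~\eqref{[M,tilde x,f]}. Given a second candidate $\tilde y$ with witness $z$, the triangle inequality applied to $x',z\in B_M(x,2\rho)$ and $\tilde y\in B_M(z,2\tilde\rho)$ yields $d_M(x',\tilde y)<4\rho+2\tilde\rho$, after which I would invoke the injectivity of $\widetilde\chi_{M,f}$ on a sufficiently large ball around $x'$ (Lemma~\ref{l: chi_M,f : B_M(x,4 rho + kappa sigma) to V} at $x'$) to force $\tilde y=\tilde x$ from the shared value $0$. I expect this uniqueness step to be the main obstacle: its success requires the embedding radius $4\tilde\rho+\tilde\kappa\tilde\sigma$ around $x'$ to dominate $4\rho+2\tilde\rho$, which should be arranged by exploiting the strict inequalities hidden in the existential parameters of $\widetilde\NN_0$, but the parameter bookkeeping among $(\rho,\kappa,\sigma)$ and $(\tilde\rho,\tilde\kappa,\tilde\sigma)$ is the delicate part of the argument; everything else reduces to direct application of Lemma~\ref{l: Phi^-1} and the definitions of $\widetilde\chi,\widetilde\Theta$.
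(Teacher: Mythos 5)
Your existence part and the verification of \eqref{[M,tilde x,f]} and \eqref{tilde v} are correct and match the paper's argument. The gap is exactly where you flagged it, in the uniqueness step, and it is a genuine one that the ``parameter bookkeeping'' you hope for cannot close.

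The crude triangle bound $d_M(x',\tilde y)<4\rho+2\tilde\rho$ mixes $\rho$ (from the first chart) with $\tilde\rho$ (from the second), whereas Lemma~\ref{l: chi_M,f : B_M(x,4 rho + kappa sigma) to V} applied to $\widetilde\chi_{M,f}$ only gives injectivity on a ball of radius $4\tilde\rho+\tilde\kappa\tilde\sigma$, i.e.\ a radius expressed purely in the tilde parameters. The two charts' quintuples $(V,e,\rho,\kappa,\sigma)$ and $(\widetilde V,\tilde e,\tilde\rho,\tilde\kappa,\tilde\sigma)$ are chosen completely independently, so $\rho$ can be arbitrarily large relative to $\tilde\rho$ and $\tilde\kappa\tilde\sigma$; consequently $4\rho+2\tilde\rho\le4\tilde\rho+\tilde\kappa\tilde\sigma$ simply fails in general. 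The ``strict inequalities hidden in the existential parameters of $\widetilde\NN_0$'' are inequalities on the per-point witnesses $\tilde\rho'>5\tilde\rho+\tilde\kappa\tilde\sigma$ and $\tilde\kappa'<\tilde\kappa$; they constrain only tilde quantities and say nothing about $\rho$, so they cannot rescue the estimate.

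The paper avoids this by never passing through $\rho$ in the distance estimate. Writing $y'$ for the witness (your $z$), it bounds $d_M(\tilde x,\tilde y)\le d_M(\tilde x,x')+d_M(x',y')+d_M(y',\tilde y)$, with $d_M(\tilde x,x')<2\tilde\rho$ and $d_M(y',\tilde y)<2\tilde\rho$ as you have, but then controls the middle term via the quasi-isometry property of the \emph{second} chart map: since $\widetilde\chi_{M,f}$ is a $\tilde\kappa$-quasi-isometric embedding on a large enough ball containing $x'$ and $y'$, one gets $d_M(x',y')\le\tilde\kappa\,\|\widetilde\chi_{M,f}(x')-\widetilde\chi_{M,f}(y')\|=\tilde\kappa\,\|\tilde v-\tilde w\|$, and since $\tilde v,\tilde w\in\widetilde B=B_{\widetilde V}(0,\tilde\sigma)$ this is bounded by a multiple of $\tilde\kappa\tilde\sigma$. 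The resulting estimate on $d_M(\tilde x,\tilde y)$ is entirely in tilde parameters, so it does fall within the injectivity radius around $\tilde x$ given by Lemma~\ref{l: chi_M,f : B_M(x,4 rho + kappa sigma) to V}, forcing $\tilde x=\tilde y$. The key change of strategy, which your proposal lacks, is to estimate $d_M(x',y')$ by the $\widetilde\chi$-image rather than by their common membership in $B_M(x,2\rho)$.
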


\begin{proof}
	Let $[M,f,x']\in\NN_2\cap\widetilde\NN_2$ such that $\Phi([M,f,x'])=(v,[M,f,x])$ and $\widetilde\Phi([M,f,x'])=(\tilde v,[M,f,\tilde x])$. By Lemma~\ref{l: Phi^-1}, this means that $\chi_{M,f}(x)=\widetilde\chi_{M,f}(\tilde x)=0$, $x'\in B_M(x,2\rho)\cap B_M(\tilde x,2\tilde\rho)$, $\chi_{M,f}(x')=v$ and $\widetilde\chi_{M,f}(x')=\tilde v$, obtaining~\eqref{[M,tilde x,f]} and~\eqref{tilde v}. Note that~\eqref{tilde v} makes sense by Lemma~\ref{l: chi_M,f : B_M(x,4 rho + kappa sigma) to V}.
	
	Now, assume that~\eqref{[M,tilde x,f]} also holds using another point $\tilde y\in\widetilde\chi_{M,f}^{-1}(0)$ instead of $\tilde x$. Thus there is some $y'\in B_M(x,2\rho)\cap B_M(\tilde x,2\tilde\rho)$ with $w:=\chi_{M,f}(y')\in B$ and $\tilde w:=\widetilde\chi_{M,f}(y')=\widetilde B$. Then $[M,f,y']\in\NN_2$ by Lemma~\ref{l: NN_2}, and $\Phi([M,f,y'])=(w,[M,f,x])$ and $\widetilde\Phi([M,f,y'])=(w,[M,f,\tilde y])$. We have
		\[
			d_M(\tilde x,\tilde y)\le d_M(\tilde x,x')+d_M(x',y')+d_M(y',\tilde y)
			<4\tilde\rho+\tilde\kappa\|\tilde v-\tilde w\|<4\tilde\rho+\tilde\kappa\tilde\sigma\;.
		\]
	Since moreover $\tilde\chi_{M,f}(\tilde x)=0=\tilde\chi_{M,f}(\tilde y)$, we get $\tilde x=\tilde y$ by Lemma~\ref{l: chi_M,f : B_M(x,4 rho + kappa sigma) to V}.
\end{proof}

\begin{prop}\label{p: fol str}
	All possible maps $\Phi:\NN_2\to B\times\ZZ$ form an atlas of a $C^\infty$ foliated structure on $\widehat\MM_{*,\text{\rm imm}}^\infty(n)$.
\end{prop}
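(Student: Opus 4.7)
The plan is to verify the defining conditions of a $C^\infty$ foliated atlas using the lemmas already in place. The sets $\NN_2(V,e,\rho,\kappa,\sigma)$ cover $\widehat\MM_{*,\text{imm}}^\infty(n)$ as noted before, and each $\Phi:\NN_2\to B\times\ZZ$ is a homeomorphism by Corollary~\ref{c: Phi is a homeomorphism}. The slice $B\times\{[M,f,\tilde x]\}$ is sent by $\Phi^{-1}$ to the set $\{\,[M,f,x']\mid x'\in B_M(\tilde x,2\rho)\cap\chi_{M,f}^{-1}(B)\,\}$ by Lemma~\ref{l: Phi^-1}, which lies in the single leaf $\im\hat\iota_{M,f}$; the embedding property of $\chi_{M,f}$ in Lemma~\ref{l: chi_M,f : B_M(x,4 rho + kappa sigma) to V}, together with the fact that $\hat\iota_{M,f}$ is a local homeomorphism, identifies this slice with an open subset of the leaf in the leaf topology. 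Compatibility of charts is immediate from Lemma~\ref{l: widetilde Phi circ Phi^-1}: the transversal component of $\widetilde\Phi\circ\Phi^{-1}$ depends only on the transversal argument, so slices are mapped to slices. This already gives the topological foliated structure.

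For the $C^\infty$ structure I would fix transversal values $z=[M,f,x]\in\ZZ$ and $\tilde z=[M,f,\tilde x]\in\widetilde\ZZ$ related as in Lemma~\ref{l: widetilde Phi circ Phi^-1}, and set $O=B_M(x,2\rho)\cap B_M(\tilde x,2\tilde\rho)$. On this open set, both $\chi_{M,f}=\Pi_V\circ(f-e)$ and $\widetilde\chi_{M,f}=\Pi_{\widetilde V}\circ(f-\tilde e)$ are $C^\infty$; since $\chi_{M,f}$ is a $\tilde\kappa$-quasi-isometric embedding on $B_M(x,\tilde\rho)\supset O$, its differential has maximal rank $n$, so $\chi_{M,f}|_O$ is a $C^\infty$ diffeomorphism onto its image. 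The leaf component of the change of coordinates is therefore $v\mapsto\widetilde\chi_{M,f}\circ(\chi_{M,f}|_O)^{-1}(v)$, which is $C^\infty$ in $v$.

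What remains, and is the main obstacle, is the ambient-continuity of the leafwise derivatives. I would argue as follows. Via the chain rule and the inverse function formula, each $k$-th derivative of $\widetilde\chi_{M,f}\circ(\chi_{M,f}|_O)^{-1}$ at $v$ is a rational expression whose numerator is a universal polynomial in the partial derivatives of $\chi_{M,f}$ and $\widetilde\chi_{M,f}$ up to order $k$, evaluated at $\chi_{M,f}^{-1}(v)$, and whose denominator is a power of $\det(d\chi_{M,f})$. All these ingredients are local partial derivatives of $f$. Now for any sequence $[M_i,f_i,x_i]\to[M,f,x]$ in $\NN_2\cap\widetilde\NN_2$, Propositions~\ref{p: widehat D^m_R,r'(M,f,x) subset widehat U^m_R,r(M,f,x)} and~\ref{p: widehat U^m_R,r'(M,f,x) subset widehat D^m_R,r(M,f,x)} produce pointed $C^{m+1}$ local diffeomorphisms $\phi_i:(M,x)\rightarrowtail(M_i,x_i)$ with $\phi_i^*g_i\to g$ and $\phi_i^*f_i\to f$ in the $C^m$ topology on any prescribed compact subdomain containing $O$, for every $m$. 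Pulling back the chart expressions along $\phi_i$ reduces the problem to convergence of partial derivatives in a fixed local chart on $M$, which is exactly what the previous statement provides; the denominators $\det(d\chi_{M_i,f_i})$ stay uniformly bounded away from zero on $O$ thanks to the uniform quasi-isometry constants controlling $\chi_{M_i,f_i}$. Hence every leafwise partial derivative of the change of coordinates is continuous on the ambient space, and the atlas is $C^\infty$. Finally, uniqueness of the foliated structure is automatic once the charts are verified to be pairwise compatible.
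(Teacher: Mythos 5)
Your outline is correct, and it takes a genuinely different route for the $C^\infty$ part. The topological portion of your argument coincides with the paper's: both use Corollary~\ref{c: Phi is a homeomorphism}, the open covering by sets $\NN_2$, and the fact from Lemma~\ref{l: widetilde Phi circ Phi^-1} that the transversal component $[M,f,\tilde x]$ of $\widetilde\Phi\circ\Phi^{-1}$ depends only on $[M,f,x]$; and both note that, for fixed $[M,f,x]$, the leafwise component $v\mapsto\tilde v$ is $C^\infty$ since it is the composite~\eqref{tilde v}.

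Where you diverge is in establishing the ambient continuity of the leafwise derivatives. The paper works with the $\widehat U^m_{R,r}$ picture, picking pointed local quasi-equivalences $\phi_i:(M_i,f_i,x_i)\rightarrowtail(M,f,x)$ and then running quantitative estimates on the iterated tangent bundles $T^{(m)}$ equipped with Sasaki metrics, culminating in the uniform bounds~\eqref{d_i^(m)((phi_i circ chi_M_i,f_i^-1)_*^(m)(xi),(chi_M,f^-1)_*^(m)(xi))} and the $(1+\nu^2)r$ estimate; Lemma~\ref{l: f_*^(m)} then converts these to convergence of ordinary partial derivatives. You instead pass through the $\widehat D^m_{R,r}$ picture (via Propositions~\ref{p: widehat D^m_R,r'(M,f,x) subset widehat U^m_R,r(M,f,x)} and~\ref{p: widehat U^m_R,r'(M,f,x) subset widehat D^m_R,r(M,f,x)}) to get pointed $C^{m+1}$ embeddings $\phi_i$ of compact domains with $\phi_i^*g_i\to g$ and $\phi_i^*f_i\to f$ in the $C^m$ topology, then write $\widetilde\chi_{M_i,f_i}\circ\chi_{M_i,f_i}^{-1}=(\phi_i^*\widetilde\chi_{M_i,f_i})\circ(\phi_i^*\chi_{M_i,f_i})^{-1}$ and invoke the continuity of inversion in the $C^\infty$ topology (\cite[p.~64, Exercise~9]{Hirsch1976}) to push the convergence through. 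This is essentially the argument the paper itself uses a few lines later in the proof of Proposition~\ref{p: widehat FF_*,imm^infty(n)}-\eqref{i: hat iota_M,f is a local diffeomorphism}, so it is fully in the paper's spirit; it is qualitatively cleaner and avoids the explicit Sasaki-metric bookkeeping, at the cost of relying on the black-box continuity-of-inversion result and losing the explicit rate estimates. A couple of small points if you were to write this out fully: the embeddings $\phi_i$ from Definition~\ref{d: C^infty convergence in widehat MM_*(n)} depend a priori on the compact domain and on $m$, so a diagonal or exhaustion argument is needed; the ``rational expression in derivatives of $f$'' description requires fixing a coordinate system on $M$ and is really only needed to see that the data are controlled by $\phi_i^*f_i\to f$; and your reference to $\im\hat\iota_{M,f}$ as ``the leaf'' is slightly premature, since identifying the leaves with these images is only established in the subsequent proposition, though nothing in your proof actually uses it.
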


\begin{proof}
	The maps $\Phi:\NN_2\to B\times\ZZ$ are homeomorphisms (Corollary~\ref{c: Phi is a homeomorphism}). All possible sets $\NN_2$ form an open cover of $\widehat\MM_{*,\text{\rm imm}}^\infty(n)$. Moreover, in Lemma~\ref{l: widetilde Phi circ Phi^-1}, it follows from~\eqref{[M,tilde x,f]} that $[M,f,\tilde x]$ depends only on $[M,f,x]$. Thus all possible maps $\Phi:\NN_2\to B\times\ZZ$ form an atlas of a foliated structure on $\widehat\MM_{*,\text{\rm imm}}^\infty(n)$.
	
	With the notation of Lemma~\ref{l: widetilde Phi circ Phi^-1} and the terminology of \cite[Section~2.1]{AlvarezBarralCandel2016}, it only remains to show that $\widetilde\Phi\circ\Phi^{-1}$ is $C^\infty$; i.e., to prove that the mapping $(v,[M,f,x])\mapsto\tilde v$ is $C^\infty$. First, note that, for each fixed $[M,f,x]$, the mapping $v\mapsto\tilde v$ is $C^\infty$ because~\eqref{tilde v} is $C^\infty$. Consider now a convergent sequence $[M_i,f_i,x_i]\to[M,f,x]$ in $\ZZ$. Let $\tilde x_i\in\widetilde\chi_{M_i,f_i}^{-1}(0)$ be determined by
		\[
			B_M(x_i,2\rho)\cap B_M(\tilde x_i,2\tilde\rho)
			\cap\chi_{M_i,f_i}^{-1}(B)\cap\widetilde\chi_{M_i,f_i}^{-1}(\widetilde B)\ne\emptyset\;,
		\]
	and let $O_i=B_M(x_i,2\rho)\cap B_M(\tilde x,2\tilde\rho)$. Given $m\in\N$ and $R,r>0$, for each $i$ large enough, there is an $(m,R,\lambda_i,\epsilon_i)$-pointed local quasi-equivalence $\phi_i:(M_i,f_i,x_i)\rightarrowtail(M,f,x)$ for some $\lambda_i\in(1,e^r)$ and $\epsilon_i\in(0,r)$. Let $\Omega_i^{(m)}$ be a compact domain in $\dom\phi_{i*}^{(m)}$ such that $B_i^{(m)}(x_i,R)\subset\Omega_i^{(m)}$ and $\phi_{i*}^{(m)}:(\Omega_i^{(m)},f_{i*}^{(m)})\to(T^{(m)}M,f_*^{(m)})$ is an $(\epsilon_i,\lambda_i)$-quasi-equivalence. Since $(\Pi_V)^{(m)}_*\equiv\Pi_{V^{2^m}}:T^{(m)}\E\equiv\E^m\to T^{(m)}V\equiv V^{2^m}$, we have
		\begin{multline}\label{|chi_M,f*^(m) - chi_M_i,f_i*^(m) circ phi_i*^(m)|_Omega_i^(m)}
			\left\|\chi_{M_i,f_i*}^{(m)}-(\chi_{M,f}\circ\phi_{i})_*^{(m)}\|_{\Omega_i^{(m)}}
			\le\|(f_i-e)_*^{(m)}-((f-e)\circ\phi_{i})_*^{(m)}\right\|_{\Omega_i^{(m)}}\\
			=\left\|f_{i*}^{(m)}-(\phi_i^*f)_*^{(m)}\right\|_{\Omega_i^{(m)}}<\epsilon_i<r\;.
		\end{multline}
	
	Assume that $R>2e^r\rho$. Then $B_i(x_i,2\rho)\subset B_i(x_i,R)$, and, like in Claim~\ref{cl: B_i(x_i, 2 rho_0) subset phi_i(B_M(x,rho))}, we also get $B_M(x,2\rho)\subset\phi_i(B_i(x_i,R))$. Thus $O_i\subset B_i(x_i,R)$ and $O\subset\phi_i(B_i(x_i,R))$. Let $\Xi\subset\chi_{M,f}(O)$ be a compact domain, which is also contained in $\chi_{M_i,f_i}(O_i)$ for $i$ large enough. Let $\Xi^{(m)}$ be a compact domain contained in $T^{(m)}\E$ such that
		\[
			\Xi\subset\Int(\Xi^{(m)})\;,\quad
			(\chi_{M_i,f_i}^{-1})_*^{(m)}(\Xi^{(m)})\subset\Omega_i^{(m)}\cap T^{(m)}O_i\;,\quad
			(\chi_{M,f}^{-1})_*^{(m)}(\Xi^{(m)})\subset\phi_{i*}^{(m)}(\Omega_i^{(m)})\cap T^{(m)}O\;.
		\]
	Since the restrictions of $(\chi_{M,f}^{-1})_*^{(m)}$ and $(\widetilde\chi_{M,f})_*^{(m)}$ to the respective compact domains $\Xi^{(m)}$ and $\chi_{M,f}^{-1}(\Xi^{(m)})\cap T^{(m)}O$ are $C^\infty$ embeddings, these restrictions are $\nu$-quasi-isometric for some $\nu\ge1$. Hence, by \eqref{|chi_M,f*^(m) - chi_M_i,f_i*^(m) circ phi_i*^(m)|_Omega_i^(m)},
		\begin{multline}\label{d_i^(m)((phi_i circ chi_M_i,f_i^-1)_*^(m)(xi),(chi_M,f^-1)_*^(m)(xi))}
			d_i^{(m)}\left((\phi_i\circ\chi_{M_i,f_i}^{-1})_*^{(m)}(\xi),(\chi_{M,f}^{-1})_*^{(m)}(\xi)\right)
			\le\nu\left\|(\chi_{M,f}\circ\phi_i\circ\chi_{M_i,f_i}^{-1})_*^{(m)}(\xi)-\xi\right\|\\
			=\nu\left\|(\chi_{M,f}\circ\phi_i\circ\chi_{M_i,f_i}^{-1})_*^{(m)}(\xi)
			-(\chi_{M_i,f_i}\circ\chi_{M_i,f_i}^{-1})_*^{(m)}(\xi)\right\|
			<\nu r\;,
		\end{multline}
	for all $\xi\in\Xi^{(m)}$. On the other hand, like in~\eqref{|chi_M,f*^(m) - chi_M_i,f_i*^(m) circ phi_i*^(m)|_Omega_i^(m)}, we get
		\begin{equation}\label{|widetilde chi_M,f*^(m) - widetilde chi_M_i,f_i*^(m) circ phi_i*^(m)|_Omega_i^(m)}
			\left\|\widetilde\chi_{M_i,f_i*}^{(m)}
			-(\widetilde\chi_{M,f}\circ\phi_{i})_*^{(m)}\right\|_{\Omega_i^{(m)}}<r\;.
		\end{equation}
	Combining~\eqref{d_i^(m)((phi_i circ chi_M_i,f_i^-1)_*^(m)(xi),(chi_M,f^-1)_*^(m)(xi))} and~\eqref{|widetilde chi_M,f*^(m) - widetilde chi_M_i,f_i*^(m) circ phi_i*^(m)|_Omega_i^(m)}, we obtain the following for all $\xi\in\Xi^{(m)}$:
		\begin{multline*}
			\left\|(\widetilde\chi_{M_i,f_i}\circ\chi_{M_i,f_i}^{-1})_*^{(m)}(\xi)
			-(\widetilde\chi_{M,f}\circ\chi_{M,f}^{-1})_*^{(m)}(\xi)\right\|\\
			\begin{aligned}
				&\le\left\|(\widetilde\chi_{M_i,f_i}\circ\chi_{M_i,f_i}^{-1})_*^{(m)}(\xi)
				-(\widetilde\chi_{M,f}\circ\phi_i\circ\chi_{M_i,f_i}^{-1})_*^{(m)}(\xi)\right\|\\
				&\phantom{\le\text{}}
				\text{}+\left\|(\widetilde\chi_{M,f}\circ\phi_i\circ\chi_{M_i,f_i}^{-1})_*^{(m)}(\xi)
				-(\widetilde\chi_{M,f}\circ\chi_{M,f}^{-1})_*^{(m)}(\xi)\right\|
			\end{aligned}\\
			<r+\nu\,d_i^{(m)}\left((\phi_i\circ\chi_{M_i,f_i}^{-1})_*^{(m)}(\xi),(\chi_{M,f}^{-1})_*^{(m)}(\xi)\right)
			<(1+\nu^2)r\;.
		\end{multline*}
	Note that the same choices of $\Xi$ and $\Xi^{(m)}$ are valid for all $r$ small enough, obtaining that $(\widetilde\chi_{M_i,f_i}\circ\chi_{M_i,f_i}^{-1})_*^{(m)}\to(\widetilde\chi_{M,f}\circ\chi_{M,f}^{-1})_*^{(m)}$ uniformly on $\Xi^{(m)}$. Moreover the same choice of $\Xi$ is valid for all $m$, and therefore $\widetilde\chi_{M_i,f_i}\circ\chi_{M_i,f_i}^{-1}\to\widetilde\chi_{M,f}\circ\chi_{M,f}^{-1}$ on $\Xi$ with respect to the $C^\infty$ topology by the obvious version of Lemma~\ref{l: f_*^(m)} for maps between open subsets of $\R^n$. Since every point in $\chi_{M,f}(O)$ belongs to some domain $\Xi$ as above if $r$ is chosen small enough, it follows that $\widetilde\Phi\circ\Phi^{-1}$ is $C^\infty$.
\end{proof}

Now, let $\widehat\FF_{*,\text{\rm imm}}^\infty(n)$ denote the $C^\infty$ foliated structure on $\widehat\MM_{*,\text{\rm imm}}^\infty(n)$ defined by the maps $\Phi$ according to Proposition~\ref{p: fol str}.

\begin{prop}\label{p: widehat FF_*,imm^infty(n)}
	The following properties hold:
		\begin{enumerate}[{\rm(}i\/{\rm)}]
	
			\item\label{i: uniqueness of widehat FF_*,imm^infty(n)} $\widehat\FF_{*,\text{\rm imm}}^\infty(n)$ is the unique $C^\infty$ foliated structure on $\widehat\MM_{*,\text{\rm imm}}^\infty(n)$ such that its underlying topological foliated structure is $\widehat\FF_{*,\text{\rm imm}}(n)$ and $\ev:\widehat\MM_{*,\text{\rm imm}}^\infty(n)\to\E$ is a $C^\infty$ immersion. 
	
	 		\item\label{i: hat iota_M,f is a local diffeomorphism} For each $[M,f,x]\in\widehat\MM_{*,\text{\rm imm}}^\infty(n)$, the map $\hat\iota_{M,f}:M\to\im\hat\iota_{M,f}$ is a local diffeomorphism, where the leaf $\im\hat\iota_{M,f}$ is equipped with the $C^\infty$ structure induced by $\widehat\FF_{*,\text{\rm imm}}^\infty(n)$.
		
		\end{enumerate}
\end{prop}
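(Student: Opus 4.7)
The plan is to prove (ii) first by a direct computation in the $\Phi$-charts, then to derive the existence half of (i) from (ii) and the construction of the atlas, and finally to establish uniqueness in (i) by showing that any competing $C^\infty$ foliated structure $\widehat\FF'$ must accept every $\Phi$ as one of its $C^\infty$ foliated charts. For (ii), fix $[M,f,x_0]$ and a chart $\Phi=(\chi,\Theta):\NN_2\to B\times\ZZ$ containing it. Write $\Theta([M,f,x_0])=[M,f,x_0'']$ for the unique $x_0''\in B_M(x_0,2\rho)$ with $\chi_{M,f}(x_0'')=0$ given by Lemma~\ref{l: Phi^-1}; that lemma also identifies the plaque through $[M,f,x_0]$ with $\hat\iota_{M,f}(U)$ for $U=B_M(x_0'',2\rho)\cap\chi_{M,f}^{-1}(B)$, and gives $\Phi\circ\hat\iota_{M,f}(x)=(\chi_{M,f}(x),[M,f,x_0''])$ on $U$. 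By the definition of $\NN_0$ together with $\tilde\rho>5\rho+\kappa\sigma$, the map $\chi_{M,f}=\Pi_V\circ(f-e)$ is a $\tilde\kappa$-quasi-isometric embedding on a ball containing $U$, so $d\chi_{M,f}$ is pointwise an isomorphism on $U$ and $\chi_{M,f}|_U$ is a $C^\infty$ diffeomorphism onto its image in $V$; therefore $\hat\iota_{M,f}|_U$ is a $C^\infty$ diffeomorphism onto the plaque, proving (ii).

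For the existence half of (i), (ii) combined with the connectedness of $M$ shows that each set $\im\hat\iota_{M,f}$ is saturated by plaques of the $\Phi$-atlas, hence is exactly the leaf of $\widehat\FF_{*,\text{\rm imm}}^\infty(n)$ through any of its points, so the underlying topological foliated structure is $\widehat\FF_{*,\text{\rm imm}}(n)$. In a chart $\Phi$, Lemma~\ref{l: Phi^-1} yields $\ev\circ\Phi^{-1}(v,[M,f,x])=f(x')$ with $x'=\chi_{M,f}^{-1}(v)\in B_M(x,2\rho)$; leafwise this reduces to $f\circ\chi_{M,f}^{-1}:B\to\E$, a $C^\infty$ immersion because $f$ is a $C^\infty$ immersion and $\chi_{M,f}^{-1}$ is a $C^\infty$ diffeomorphism. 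Ambient continuity of its leafwise partial derivatives follows from Definition~\ref{d: C^infty convergence in widehat MM_*(n)}: for $[M_i,f_i,x_i]\to[M,f,x]$ the pointed $C^\infty$ embeddings $\phi_i$ supplied there satisfy $\phi_i^*g_i\to g$ and $\phi_i^*f_i\to f$ in the $C^\infty$ topology on a fixed compact domain, and this translates into $C^\infty$ convergence of $f_i\circ\chi_{M_i,f_i}^{-1}$ on compact subsets of $V$.

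For uniqueness, let $\widehat\FF'$ be another $C^\infty$ foliated structure with the same underlying topological foliated structure and such that $\ev$ is a $C^\infty$ immersion with respect to $\widehat\FF'$. Then $\chi=\Pi_V\circ(\ev-e)$ is $C^\infty$ with respect to $\widehat\FF'$. On any leaf $L=\im\hat\iota_{M,f}$ the identity $\ev\circ\hat\iota_{M,f}=f$ gives $\mathrm{Im}(d(\ev|_L))=\mathrm{Im}(df)$, on which $\Pi_V$ is injective by the quasi-isometry clause of the definition of $\NN_0$; hence $d(\chi|_L)=\Pi_V\circ d(\ev|_L)$ is an isomorphism at every plaque point, and $\chi|_L$ is a local $C^\infty$ diffeomorphism for the $\widehat\FF'$-leafwise structure as well. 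Consequently every $\Phi=(\chi,\Theta)$ is a $C^\infty$ foliated chart for $\widehat\FF'$: its leafwise part $\chi$ is $C^\infty$ with ambient-continuous leafwise derivatives, and its transverse part $\Theta$ is continuous and leafwise constant. Since these $\Phi$'s form the defining atlas of $\widehat\FF_{*,\text{\rm imm}}^\infty(n)$, we conclude $\widehat\FF'=\widehat\FF_{*,\text{\rm imm}}^\infty(n)$.

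The main technical obstacle is the ambient continuity of the leafwise derivatives of $\ev$ in the $\Phi$-coordinates: one must carefully track, along a convergent sequence $[M_i,f_i,x_i]\to[M,f,x]$, how the local inverses $\chi_{M_i,f_i}^{-1}$ converge to $\chi_{M,f}^{-1}$ in the $C^\infty$ topology on a fixed compact neighborhood of $v$ in $V$, and then deduce the corresponding $C^\infty$ convergence of the compositions $f_i\circ\chi_{M_i,f_i}^{-1}$.
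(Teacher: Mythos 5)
Your plan follows the paper's proof quite closely: work in the $\Phi$-charts, use $\ev\circ\hat\iota_{M,f}=f$ to identify the leafwise expression of $\ev$, and for uniqueness show that each $\Phi$ is a $C^\infty$ foliated chart for any competing structure $\widehat\FF'$. However, there is a genuine gap precisely where you flag ``the main technical obstacle'' without resolving it: the ambient continuity of the leafwise derivatives of $\ev\circ\Phi^{-1}$. The paper carries this out by taking a convergent sequence $[M_i,f_i,x_i]\to[M,f,x]$ in $\ZZ$, the pointed embeddings $\phi_i:(\Omega,x)\to(M_i,x_i)$ with $\phi_i^*g_i\to g_M$ and $\phi_i^*f_i\to f$ in $C^\infty$, deducing $\phi_i^{-1}\circ\chi_{M_i,f_i}^{-1}\to\chi_{M,f}^{-1}$ in $C^\infty$ by the continuity of inversion (\cite[p.~64, Exercise~9]{Hirsch1976}), and then using the decomposition
\[
f_i\circ\chi_{M_i,f_i}^{-1}-f\circ\chi_{M,f}^{-1}
=f_i\circ\phi_i\circ\bigl(\phi_i^{-1}\circ\chi_{M_i,f_i}^{-1}-\chi_{M,f}^{-1}\bigr)
+(f_i\circ\phi_i-f)\circ\chi_{M,f}^{-1}\to0\;.
\]
Without this estimate the existence half of (i) is not actually proved.

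Two further points need tightening. In the uniqueness step, concluding that each $\Phi$ is a $C^\infty$ foliated chart for $\widehat\FF'$ needs that $\Phi^{-1}$ is $C^\infty$, not merely that $d(\chi|_L)$ is an isomorphism at plaque points; the paper again invokes \cite[p.~64, Exercise~9]{Hirsch1976} to pass from the restriction of $\chi$ to each plaque being a $C^\infty$ diffeomorphism to $\Phi$ itself being a $C^\infty$ foliated diffeomorphism. Also, your identity $\mathrm{Im}(d(\ev|_L))=\mathrm{Im}(df)$ cannot come directly from the chain rule, since it is not yet known that $\hat\iota_{M,f}$ is smooth for the $\widehat\FF'$-leafwise structure on $L$; one can either argue that $\ev|_L$ and $f$ are both local $C^\infty$ embeddings with coinciding local images, or (as the paper does) show directly that $\hat\iota_{M,f}$ is a local $C^\infty$ diffeomorphism for the $\widehat\FF'$-structure using that $\ev$ is a $C^\infty$ immersion and $\ev\circ\hat\iota_{M,f}=f$ is a $C^\infty$ local embedding.
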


\begin{proof}
	Take a foliated chart $\Phi:\NN_2\to B\times\ZZ$ as above. For each $[M,f,x]\in\ZZ$, the restriction of $\ev\circ\Phi^{-1}$ to $B\times\{[M,f,x]\}\equiv B$ is the composite
		\[
			\begin{CD}
				B @>{\chi_{M,f}^{-1}}>> B_M(x,2\rho)\cap\chi_{M,f}^{-1}(B) @>f>> \E\;,
			\end{CD}
		\]
	where the first map is a $C^\infty$ diffeomorphism, and the second one is a $C^\infty$ immersion. Take a convergent sequence $[M_i,f_i,x_i]\to[M,f,x]$ in $\ZZ$, and let $\Xi\subset B$ be any compact domain. Given $R>2\rho$ and a compact domain $\Omega\subset M$ containing $B_M(x,R)$, there is a $C^\infty$ pointed embedding $\phi_i:(\Omega,x)\to(M_i,x_i)$ for $i$ large enough such that $\phi_i^*g_i\to g_M$ and $\phi_i^*f_i\to f$ on $\Omega$ with respect to the $C^\infty$ topology. So $B_i(x_i,R)\subset\phi_i(\Omega)$ for $i$ large enough. Thus also $\phi_i^*\chi_{M_i,f_i}\to\chi_{M,f}$ on $\Omega$ with respect to the $C^\infty$ topology, and therefore $\phi_i^{-1}\circ\chi_{M_i,f_i}^{-1}\to\chi_{M,f}^{-1}$ on $\Xi$ with respect to the $C^\infty$ topology \cite[p.~64, Exercise~9]{Hirsch1976}. Hence
		\[
			f_i\circ\chi_{M_i,f_i}^{-1}-f\circ\chi_{M,f}^{-1}
			=f_i\circ\phi_i\circ(\phi_i^{-1}\circ\chi_{M_i,f_i}^{-1}-\chi_{M,f}^{-1})
			+(f_i\circ\phi_i-f)\circ\chi_{M,f}^{-1}\to0
		\]
	on $\Xi$ with respect to the $C^\infty$ topology. Since any element of $B$ is contained some $\Xi$ as above, it follows that $\ev\circ\Phi^{-1}$ is a $C^\infty$ immersion, and therefore $\ev:\widehat\MM_{*,\text{\rm imm}}^\infty(n)\to\E$ is $C^\infty$ with respect to $\widehat\MM_{*,\text{\rm imm}}^\infty(n)$. This shows~\eqref{i: uniqueness of widehat FF_*,imm^infty(n)}, except uniqueness.
	
	According to Lemma~\ref{l: Phi^-1}, for each chart $\Phi:\NN_2\to B\times\ZZ$, the plaque that corresponds to each $[M,f,x]\in\ZZ$ is $\hat\iota_{M,f}(B_M(x,2\rho)\cap\chi_{M,f}^{-1}(B))$. Moreover the composite
		\[
			\begin{CD}
				B_M(x,2\rho)\cap\chi_{M,f}^{-1}(B) @>{\hat\iota_{M,f}}>> 
				\hat\iota_{M,f}(B_M(x,2\rho)\cap\chi_{M,f}^{-1}(B)) @>{\chi}>> B
			\end{CD}
		\]
	is the diffeomorphism $\chi_{M,f}:B_M(x,2\rho)\cap\chi_{M,f}^{-1}(B)\to B$. This shows that the leaf topology on $\widehat\MM_{*,\text{\rm imm}}(n)$ equals the topological sum of all possible spaces $\im\hat\iota_{M,f}$ with the topology so that $\hat\iota_{M,f}:M\to\im\hat\iota_{M,f}$ is a local homeomorphism, obtaining that these spaces are the leaves because they are connected. It also follows that $\hat\iota_{M,f}:M\to\im\hat\iota_{M,f}$ is a local diffeomorphism for each leaf $\im\hat\iota_{M,f}$. This shows~\eqref{i: hat iota_M,f is a local diffeomorphism}.
	
	Now, suppose $\ev:\widehat\MM_{*,\text{\rm imm}}^\infty(n)\to\E$ is $C^\infty$ with respect to some $C^\infty$ foliated structure $\GG$ whose underlying topological foliated structure is $\widehat\FF_{*,\text{\rm imm}}(n)$. Then $\chi:\widehat\MM_{*,\text{\rm imm}}^\infty(n)\to V$ is also $C^\infty$ with respect to $\GG$ because it equals the composite~\eqref{chi}. So each chart $\Phi=(\chi,\Theta):\NN_2\to B\times\ZZ$ is also $C^\infty$ with respect to $\GG$ and the $C^\infty$ product foliated structure of $B\times\ZZ$. Moreover, for all complete connected Riemannian manifold $M$ and $f\in C^\infty_{\text{\rm imm}}(M)$, the map $\hat\iota_{M,f}:M\to\im\hat\iota_{M,f}$ is a $C^\infty$ local diffeomorphism with respect to the $C^\infty$ structure induced by $\GG$ on the leaf $\im\hat\iota_{M,f}$ because $\ev$ is a $C^\infty$ immersion and $\ev\circ\hat\iota_{M,f}=f$, which is a $C^\infty$ local embedding. Thus the restriction of $\chi:\NN_2\to B$ to each plaque is a $C^\infty$ diffeomorphism. Using again \cite[p.~64, Exercise~9]{Hirsch1976}, it follows that $\Phi:\NN_2\to B\times\ZZ$ is also $C^\infty$ foliated diffeomorphism with respect to the restriction of $\GG$ and the $C^\infty$ product foliated structure of $B\times\ZZ$. This shows that $\GG=\widehat\FF_{*,\text{\rm imm}}^\infty(n)$, completing the proof of~\eqref{i: uniqueness of widehat FF_*,imm^infty(n)}. 
\end{proof}

Consider a leaf $\im\hat\iota_{M,f}$ of $\widehat\FF_{*,\text{\rm imm}}^\infty(n)$. Every $x\in M$ has an open neighborhood $U$ in $M$ so that $f:U\to\E$ is an embedding, obtaining that $\phi(U)\cap U=\emptyset$ for all $\phi\in\Iso(M,f)\sm\{\id_M\}$. Therefore the subgroup $\Iso(M,f)\subset\Iso(M)$ is discrete, the quotient projection $M\to\Iso(M,f)\backslash M$ is a covering map, and there is a unique Riemannian structure on the manifold $\Iso(M,f)\backslash M$ so that $M\to\Iso(M,f)\backslash M$ is a local isometry. Moreover $\hat\iota_{M,f}:M\to\im\hat\iota_{M,f}$ induces a diffeomorphism $\bar\iota_{M,f}:\Iso(M,f)\backslash M\to\im\hat\iota_{M,f}$. Thus $\hat\iota_{M,f}:M\to\im\hat\iota_{M,f}$ is a covering map, and $\im\hat\iota_{M,f}$ has a unique Riemannian metric so that $\hat\iota_{M,f}:M\to\im\hat\iota_{M,f}$ is a local isometry, and therefore $\bar\iota_{M,f}:\Iso(M,f)\backslash M\to\im\hat\iota_{M,f}$ becomes an isometry.

\begin{prop}\label{p: Riem met}
	The above Riemannian metrics on the leaves of $\widehat\FF_{*,\text{\rm imm}}^\infty(n)$ form a $C^\infty$ Riemannian metric on $(\widehat\MM_{*,\text{\rm imm}}^\infty(n),\widehat\FF_{*,\text{\rm imm}}^\infty(n))$.
\end{prop}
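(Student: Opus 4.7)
The plan is to verify the defining local condition: in each foliated chart $\Phi=(\chi,\Theta):\NN_2\to B\times\ZZ$ from Proposition~\ref{p: fol str}, I will show that the leafwise Riemannian metrics, pulled back to $B$ via the plaque parameterizations, form a family of metrics on $B$ indexed by $\ZZ$ that is continuous with respect to the $C^\infty$ topology on metrics over compacts of $B$. This is precisely the ``ambient continuity of the leafwise partial derivatives of arbitrary order'' demanded of a $C^\infty$ section on a foliated space.

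For fixed $[M,f,z]\in\ZZ$, so $\chi_{M,f}(z)=0$, the plaque through $[M,f,z]$ is parameterized by $v\mapsto[M,f,\chi_{M,f}^{-1}(v)]$, where $\chi_{M,f}^{-1}:B\to B_M(z,2\rho)\cap\chi_{M,f}^{-1}(B)\subset M$ is the $C^\infty$ diffeomorphism provided by Lemma~\ref{l: chi_M,f : B_M(x,4 rho + kappa sigma) to V}. By the definition of the leafwise metric recalled just before the proposition, $\hat\iota_{M,f}$ is a local isometry onto $\im\hat\iota_{M,f}$, so the leaf metric pulled back to $B$ along this parameterization is simply
	\[
		g^B_{M,f}:=(\chi_{M,f}^{-1})^*g_M\;,
	\]
a genuine $C^\infty$ Riemannian metric on $B$. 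Thus leafwise smoothness is automatic, and the only real content lies in transverse continuity.

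To establish the transverse continuity, I will take a convergent sequence $[M_i,f_i,z_i]\to[M,f,z]$ in $\ZZ$. By Remark~\ref{r: widehat D^m_R,r}, for any $R>2\rho$ and any compact domain $\Omega\subset M$ with $B_M(z,R)\subset\Omega$, there exist $C^\infty$ pointed embeddings $\phi_i:(\Omega,z)\to(M_i,z_i)$ for large $i$ satisfying $\phi_i^*g_i\to g_M|_\Omega$ and $\phi_i^*f_i\to f|_\Omega$ in the $C^\infty$ topology on $\Omega$. Since $\chi_{M,f}=\Pi_V\circ(f-e)$, this yields $\phi_i^*\chi_{M_i,f_i}\to\chi_{M,f}|_\Omega$ in $C^\infty$. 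By continuity of inversion of $C^\infty$ diffeomorphisms with respect to the $C^\infty$ topology \cite[p.~64, Exercise~9]{Hirsch1976}, we obtain $\phi_i^{-1}\circ\chi_{M_i,f_i}^{-1}\to\chi_{M,f}^{-1}$ in $C^\infty$ on any compact subdomain $\Xi\subset B$. The identity
	\[
		g^B_{M_i,f_i}=(\chi_{M_i,f_i}^{-1})^*g_{M_i}
		=(\phi_i^{-1}\circ\chi_{M_i,f_i}^{-1})^*(\phi_i^*g_{M_i})\;,
	\]
combined with continuity of pullback of tensors in the $C^\infty$ topology, then gives $g^B_{M_i,f_i}\to g^B_{M,f}$ in $C^\infty$ on $\Xi$, as required.

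I expect the main obstacle to be largely notational: once the $C^\infty$ convergence in $\widehat\MM_*^\infty(n)$ is translated, via Remark~\ref{r: widehat D^m_R,r}, into $C^\infty$ convergence of $\phi_i^*g_i$ and $\phi_i^*f_i$ on compact domains of $M$, the rest is a routine chase of continuity of composition, inversion, and pullback in the $C^\infty$ topology. A minor subtlety is making the choice of $\Omega$ large enough (containing $B_M(z,2\rho)$ and a collar) so that $\phi_i^{-1}\circ\chi_{M_i,f_i}^{-1}$ is defined on all of $\Xi$ for sufficiently large $i$, which can be handled exactly as in the proof of Lemma~\ref{l: Theta is cont} (cf.\ Claim~\ref{cl: B_i(x_i, 2 rho_0) subset phi_i(B_M(x,rho))}).
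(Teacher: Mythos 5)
Your proposal is correct and follows essentially the same route as the paper's proof. Both reduce the statement to showing that $\bar g_i := (\chi_{M_i,f_i}^{-1})^*g_i$ converges to $\bar g_M := (\chi_{M,f}^{-1})^*g_M$ in the weak $C^\infty$ topology on compact subdomains $\Xi\subset B$, and both do so by relating $\chi_{M_i,f_i}^{-1}$ to $\chi_{M,f}^{-1}$ through the embeddings $\phi_i$ witnessing the $C^\infty$ convergence, ultimately resting on the factorization $\chi_{M_i,f_i}^{-1}=\phi_i\circ(\phi_i^{-1}\circ\chi_{M_i,f_i}^{-1})$.

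The one place where your presentation diverges is how the convergence $\phi_i^{-1}\circ\chi_{M_i,f_i}^{-1}\to\chi_{M,f}^{-1}$ in $C^\infty(\Xi,M)$ is obtained. You invoke continuity of inversion from \cite[p.~64, Exercise~9]{Hirsch1976}, whereas the paper reuses the quantitative Sasaki-metric estimate $d_i^{(m)}\bigl((\phi_i^{-1}\circ\chi_{M_i,f_i}^{-1})_*^{(m)}(\xi),(\chi_{M,f}^{-1})_*^{(m)}(\xi)\bigr)<\nu r$ derived in the proof of Proposition~\ref{p: fol str}, combined with the ``$\Longrightarrow$'' direction of Lemma~\ref{l: f_*^(m)}. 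Your qualitative route is entirely consistent with the paper's own style (the Hirsch inversion argument is used in precisely this way in the proof of Proposition~\ref{p: widehat FF_*,imm^infty(n)}), and is arguably shorter since it avoids re-verifying the quantitative bound. You also correctly flagged the domain-sizing subtlety (needing $\Omega\supset B_M(z,R)$ with $R$ large enough that $\phi_i^{-1}\circ\chi_{M_i,f_i}^{-1}$ is defined on $\Xi$ for large $i$); the paper handles this by choosing $R>2e^r\rho$ and using the analogue of Claim~\ref{cl: B_i(x_i, 2 rho_0) subset phi_i(B_M(x,rho))}, exactly as you anticipated. Finally, your appeal to ``continuity of pullback'' is the content of the paper's two-term decomposition $\bar g_i-\bar g_M=(\phi_i^{-1}\circ\chi_{M_i,f_i}^{-1})^*(\phi_i^*g_i-g_M)+(\phi_i^{-1}\circ\chi_{M_i,f_i}^{-1})^*g_M-(\chi_{M,f}^{-1})^*g_M$, which you could record explicitly to avoid any ambiguity, but the argument is sound as stated.
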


\begin{proof}
	Let $\Phi=(\chi,\Theta):\NN_2\to B\times\ZZ$ be defined by any choice of $(V,e,\rho,\kappa,\sigma)$ as above, and let $[M_i,f_i,x_i]\to[M,f,x]$ be a convergent sequence in $\ZZ$. Let $\bar g_M$ and $\bar g_i$ be the metrics on $B$ that correspond to $g_M$ and $g_i$ by the diffeomorphisms
		\[
			\chi_{M,f}:P:=B_M(x,2\rho)\cap\chi_{M,f}^{-1}(B)\to B\;,\quad
			\chi_{M_i,f_i}:P_i:=B_i(x_i,2\rho)\cap\chi_{M_i,f_i}^{-1}(B)\to B\;,
		\]
	respectively (see Lemma~\ref{l: Phi^-1}). According to the proof of Proposition~\ref{p: widehat FF_*,imm^infty(n)}-\eqref{i: hat iota_M,f is a local diffeomorphism}, we have to prove that $\bar g_i\to\bar g_M$ as $i\to\infty$ in the weak $C^\infty$ topology.
	
	Given $m\in\N$, $R,r>0$, for each $i$ large enough, there is an $(m,R,\lambda_i,\epsilon_i)$-pointed local quasi-equivalence $\phi_i:(M,f,x)\rightarrowtail(M_i,f_i,x_i)$ for some $\lambda_i\in(1,e^r)$ and $\epsilon_i\in(0,r)$. Assuming $R>2e^r\rho$, we get $B_M(x,2\rho)\subset B_M(x,R)$ and $B_i(x_i,2\rho)\subset\phi_i(B_M(x,R))$, like in the proof of Proposition~\ref{p: fol str}. Take a compact domain $\Omega_i^{(m)}\subset\dom\phi_{i*}^{(m)}$ such that $B_i^{(m)}(x_i,R)\subset\Omega_i^{(m)}$ and $\phi_{i*}^{(m)}:\Omega_i^{(m)}\to T^{(m)}M$ is a $(\lambda_i,\epsilon_i)$-quasi-isometry. Let $\Xi\subset B$ be a compact domain, and let $\Xi^{(m)}$ be a compact domain contained in $T^{(m)}B$ such that
		\[
			\Xi\subset\Int(\Xi^{(m)})\;,\quad
			(\chi_{M,f}^{-1})_*^{(m)}(\Xi^{(m)})\cap T^{(m)}P
			\subset\Omega_i^{(m)}\;,\quad
			(\chi_{M_i,f_i}^{-1})_*^{(m)}(\Xi^{(m)})\cap T^{(m)}P_i
			\subset\phi_{i*}^{(m)}(\Omega_i^{(m)})\;.
		\]
	Like in~\eqref{d_i^(m)((phi_i circ chi_M_i,f_i^-1)_*^(m)(xi),(chi_M,f^-1)_*^(m)(xi))}, there is some $\nu\ge1$, independent of $i$, such that
		\[
			d_i^{(m)}\left((\phi_i^{-1}\circ\chi_{M_i,f_i}^{-1})_*^{(m)}(\xi),(\chi_{M,f}^{-1})_*^{(m)}(\xi)\right)
			<\nu r\;,
		\]
	for all $\xi\in\Xi^{(m)}$. Since the choice of $\Xi^{(m)}$ is valid for all $r$ small enough, it follows that $\phi_i^{-1}\circ\chi_{M_i,f_i}^{-1}\to\chi_{M,f}^{-1}$ in $C^m(\Xi,M)$ by the obvious version of Lemma~\ref{l: f_*^(m)} for maps between manifolds. Since the choice of $\Xi$ is valid for all $m$, it follows that this convergence also holds in $C^\infty(\Xi,M)$. Take a compact domain $\Omega\subset M$ such that $B_M(x,R)\subset\Omega$ and $\phi_i^*g_i\to g_M$ on $\Omega$ with respect to the $C^\infty$ topology. We get
		\[
			(\phi_i^{-1}\circ\chi_{M_i,f_i}^{-1})^*(\phi_i^*g_i-g_M)\to(\chi_{M,f}^{-1})^*0=0
		\]
	on $\Xi$ with respect to the $C^\infty$ topology. So
		\begin{multline*}
			\bar g_i-\bar g_M=(\chi_{M_i,f_i}^{-1})^*g_i-(\chi_{M,f}^{-1})^*g_M\\
			=(\phi_i^{-1}\circ\chi_{M_i,f_i}^{-1})^*(\phi_i^*g_i-g_M)
			+(\phi_i^{-1}\circ\chi_{M_i,f_i}^{-1})^*g_M-(\chi_{M,f}^{-1})^*g_M
			\to0
		\end{multline*}
	on $\Xi$ with respect to the $C^\infty$ topology. Since every point in $B$ belongs to some domain $\Xi$ as above if $r$ is chosen small enough, it follows that $\bar g_i-\bar g_M\to0$ on $B$ with respect to the weak $C^\infty$ topology.
\end{proof}

\begin{prop}\label{p: holonomy covering}
	The holonomy covering of any leaf $\im\hat\iota_{M,f}$ of $\widehat\FF_{*,\text{\rm imm}}(n)$ is $\hat\iota_{M,f}:M\to\im\hat\iota_{M,f}$.
\end{prop}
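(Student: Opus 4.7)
We already know from the discussion preceding Proposition~\ref{p: Riem met} that $\hat\iota_{M,f}:M\to\im\hat\iota_{M,f}$ is a regular covering with deck transformation group $\Iso(M,f)$. To identify it with the holonomy covering, I will show that the kernel of the holonomy representation $\pi_1(\im\hat\iota_{M,f},[M,f,x])\to\Hom$ equals $\hat\iota_{M,f*}\pi_1(M,x)$. This is equivalent to the conjunction of (a) every loop in $M$ projects to a loop in $\im\hat\iota_{M,f}$ with trivial holonomy, and (b) for any $\phi\in\Iso(M,f)\sm\{\id_M\}$ and any $x_0$ with $\phi(x_0)\ne x_0$, the projection of a path in $M$ from $x_0$ to $\phi(x_0)$ has nontrivial holonomy.

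For (a), fix a foliated chart $\Phi=(\chi,\Theta):\NN_2\to B\times\ZZ$ around $[M,f,x_0]$. By Lemma~\ref{l: Phi^-1}, the plaque through $[M,f,x_0]$ is $\hat\iota_{M,f}(B_M(x_0,2\rho)\cap\chi_{M,f}^{-1}(B))$, and $\Theta$ is constant equal to $[M,f,x_0]$ on this plaque. Therefore the transition within a single chart preserves the transversal coordinate. A loop in $M$ at $x$ can be decomposed into finitely many segments each contained in the preimage of a plaque, so covering the projected loop by a chain of foliated charts and applying the transition formula of Lemma~\ref{l: widetilde Phi circ Phi^-1} inductively yields trivial total holonomy.

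For (b), let $\tilde\gamma$ be a path in $M$ from $x_0$ to $\phi(x_0)$ and $\gamma=\hat\iota_{M,f}\circ\tilde\gamma$. Cover $\tilde\gamma$ by finitely many overlapping foliated charts $\Phi_j=(\chi_j,\Theta_j):\NN_{2,j}\to B_j\times\ZZ_j$ centered at successive reference points $y_j\in\tilde\gamma$ with $y_0=x_0$ and $y_k=\phi(x_0)$, where $\Phi_k$ is chosen so that $[M,f,x_0]\in\ZZ_k$ (possible since $[M,f,y_k]=[M,f,x_0]$). By Lemma~\ref{l: widetilde Phi circ Phi^-1}, for $[N,h,z_j]\in\ZZ_j$ sufficiently close to $[M,f,y_j]$ the transition $\Phi_{j+1}\circ\Phi_j^{-1}$ takes $[N,h,z_j]$ to $[N,h,z_{j+1}]$, where $z_{j+1}\in N$ is the unique point near $z_j$ with $\chi_{N,h,j+1}(z_{j+1})=0$. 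Iterating, the holonomy of $\gamma$ sends $[N,h,z_0]$ to $[N,h,z_k]$, and the assignment $[N,h,z_0]\mapsto z_k\in N$ is continuous in the sense of the quasi-equivalences appearing in the proof of Lemma~\ref{l: Theta is cont}; for $[N,h,z_0]=[M,f,x_0]$ itself it yields $z_k=\phi(x_0)$.

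Triviality of the holonomy germ would mean that for every $[N,h,z_0]$ close to $[M,f,x_0]$ on the transversal there exists $\psi\in\Iso(N,h)$ with $\psi(z_k)=z_0$. Using the $C^\infty$ density of immersions (Proposition~\ref{p: density}) together with a standard perturbation argument producing non-periodic Riemannian metrics in any $C^\infty$ neighborhood, I can pick a sequence $[N_i,h_i,z_{0,i}]\to[M,f,x_0]$ in $\ZZ_0$ with $\Iso(N_i,h_i)=\{\id_{N_i}\}$. For such representatives the identification forces $z_{k,i}=z_{0,i}$; but continuity of the basepoint tracking in the transition maps, applied to the quasi-equivalences $(M,f,x_0)\rightarrowtail(N_i,h_i,z_{0,i})$, yields that $z_{0,i}$ and $z_{k,i}$ are quasi-isometric images of $x_0$ and $\phi(x_0)$ respectively, hence separated by a fixed distance for $i$ large --- contradiction. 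The main obstacle is the density of non-periodic representatives at an arbitrary point $[M,f,x_0]$ and the precise control of the basepoint tracking through the chain of charts; the first reduces to a local perturbation of the metric and the immersion $f$ supported outside a neighborhood of $x_0$, and the second follows by inductively applying the estimates in the proof of Lemma~\ref{l: Theta is cont}.
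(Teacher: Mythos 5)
The paper's own proof is a one-line citation to [Lemma 11.9, AlvarezBarralCandel-universal], so I cannot compare step-by-step, but your overall strategy---splitting into (a) loops from $M$ have trivial holonomy and (b) nontrivial deck transformations give nontrivial holonomy---is the natural route and presumably matches the cited lemma. However, both halves of your argument have gaps as written.

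For (a), the observation that ``the transition within a single chart preserves the transversal coordinate'' is vacuous (the transversal coordinate is constant along a plaque by definition), and the inductive application of Lemma~\ref{l: widetilde Phi circ Phi^-1} does \emph{not} obviously give the identity germ: each transition genuinely moves the transversal coordinate from $[M,f,x]$ to $[M,f,\tilde x]$. What is needed is a shadowing argument: given $[N,h,z_0]\in\ZZ_0$ close to $[M,f,x_0]$, take a quasi-equivalence $\psi:(M,f,x_0)\rightarrowtail(N,h,z_0)$ on a ball containing $\tilde\gamma$; the shadow endpoint $z_k$ (the holonomy image) is close to $\psi(\tilde\gamma(1))=\psi(x_0)=z_0$ and satisfies $\chi_{N,h}(z_k)=0$; since $\chi_{N,h}$ is a pointed embedding on $B_N(z_0,4\rho+\kappa\sigma)$ by Lemma~\ref{l: chi_M,f : B_M(x,4 rho + kappa sigma) to V}, uniqueness forces $z_k=z_0$, hence $[N,h,z_k]=[N,h,z_0]$. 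Without this step the germ-triviality is unproven.

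For (b), the load-bearing step is ``a standard perturbation argument producing non-periodic Riemannian metrics in any $C^\infty$ neighborhood,'' but this is not a standard result and is the crux of the whole proposition. In order for $[N_i,h_i,z_{0,i}]\to[M,f,x_0]$, the perturbation must be small on all compact sets, hence essentially compactly supported; but a compactly supported perturbation of $(g,f)$ can only destroy the finitely many $\alpha\in\Iso(M,f)$ that map the support near itself, while when $\Iso(M,g)$ has positive dimension one must also rule out \emph{new} isometries of $(M,g,f')$ that were not isometries of $(M,g,f)$. You assert this ``reduces to a local perturbation supported outside a neighborhood of $x_0$,'' but the argument is not given and is not routine. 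A workable alternative, sidestepping global non-periodicity, is to break only the specific constraint in play: perturb $f$ to $f'=f+t\eta$ with $\eta$ supported near $x_0$, vanishing near $\phi(x_0)$, with $\eta(x_0)\in V^\perp\setminus\im df_{\phi(x_0)}$; then for $t\ne0$ small the shadow endpoint $z_{k,t}$ satisfies $f'(z_{k,t})\ne f'(x_0)$, so no $\alpha\in\Iso(M,f')$ sends $x_0$ to $z_{k,t}$ and $[M,f',z_{k,t}]\ne[M,f',x_0]$. In either case, the step you label as ``standard'' is exactly what requires proof and should be spelled out.
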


This proposition follows directly from the obvious version of \cite[Lemma~11.9]{AlvarezBarralCandel2016} for $\widehat\MM_{*,\text{\rm imm}}^\infty(n)$.

\section{Universality}\label{s: universality}

\begin{defn}\label{d: covering-cont} 
  	Let $X$ be a sequential Riemannian foliated space with complete leaves, and let $L_x$ denote the leaf through every $x\in X$, whose holonomy covering is denoted by $\widetilde L_x^{\text{\rm hol}}$. It is said that $X$ is \emph{covering-continuous} when there is a connected pointed covering $(\widetilde L_x,\tilde x)$ of $(L_x,x)$ for all $x\in X$ such that $[\widetilde L_{x_i},\tilde x_i]$ is $C^\infty$ convergent to $[\widetilde L_x,\tilde x]$ if $x_i\to x$ is a convergent sequence in $X$. When this condition is satisfied with $\widetilde L_x=\widetilde L_x^{\text{\rm hol}}$ for all $x\in X$, it is said that $X$ is \emph{holonomy-continuous}.
\end{defn}

\begin{rem}\label{r: covering-continuous}
  	Observe the following:
		\begin{enumerate}[(i)]
	
			\item\label{i: covering-determined} Covering-continuity and holonomy-continuity are weaker than covering-determination and holonomy-determination \cite[Definition~12.1]{AlvarezBarralCandel2016}, which were defined by using ``if and only if'' instead of ``if''.
				
  			\item\label{i: hereditary} The condition of being covering-continuous is hereditary (by saturated subspaces).
			\item\label{i: nets} Covering/holonomy-continuity/determination have obvious generalizations to arbitrary Riemannian foliated spaces by using nets instead of sequences.
		
		\end{enumerate}
\end{rem}

\begin{ex}\label{ex: covering-cont}
	The following simple examples clarify Definition~\ref{d: covering-cont}:
  		\begin{enumerate}[(i)]
			
  			\item The Reeb foliation on $S^3$ with the standard metric is covering-continuous, but it is not holonomy-continuous with any Riemannian metric. If the metric is modified around the compact leaf $T^2=S^1\times S^1$ so that the diffeomorphism $(x,y)\mapsto(y,x)$ of $T^2$ is not an isometry, then this foliation becomes non-covering-continuous.
			
  			\item The Riemannian foliated space of \cite[Example~2.5]{Lessa2015} is covering-determined but not holonomy-continuous. This example can be easily realized as a saturated subspace of a Riemannian foliated space where the holonomy coverings of the leaves are isometric to $\R$. So holonomy-continuity is not hereditary.
		
  			\item\label{i: widehat MM_*,imm^infty(n) is holonomy-cont} $\widehat\MM_{*,\text{\rm imm}}^\infty(n)$ is holonomy-continuous. However it is not holonomy-determined for $n\ge1$ by \cite[Remark~10-(iii)]{AlvarezBarralCandel2016}, since there are different points with isometric pointed holonomy covers of the corresponding pointed leaves. To see this, take any connected complete Riemannian $n$-manifold $M$, and some $x\in M$ and $f,f'\in C^\infty_{\text{\rm imm}}(M,\E)$ such that $f(x)\ne f'(x)$. Then $\hat\iota_{M,f}(x)\ne\hat\iota_{M,f'}(x)$, but $(M,x)$ is isometric to the holonomy covers of the pointed leaves $(\im\hat\iota_{M,f},\hat\iota_{M,f}(x))$ and $(\im\hat\iota_{M,f'},\hat\iota_{M,f'}(x))$.
			
		\end{enumerate}
\end{ex}

\begin{prop}[{Cf.\ \cite[Theorem~11.4.4]{CandelConlon2000-I}}]\label{p: C^infty embedding X to E}
	For any Polish $C^\infty$ foliated space $X$ with complete leaves, there is a $C^\infty$ embedding $X\to\E$.
\end{prop}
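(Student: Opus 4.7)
The plan is to mimic the standard smooth embedding of a manifold into $\E$ by combining foliated bump functions with leafwise coordinates and, since the transverse direction carries no smooth structure, an auxiliary countable family of continuous transverse-separating functions.

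Since $X$ is Polish, and therefore paracompact, choose a countable locally finite foliated atlas $\{(U_n,\varphi_n)\}_{n\in\N}$ with $\varphi_n=(x_n^1,\dots,x_n^n,y_n)\colon U_n\to B_n\times T_n$, together with open refinements $W_n\subset\ol W_n\subset V_n\subset\ol V_n\subset U_n$ such that $\{W_n\}$ still covers $X$. Take foliated $C^\infty$ bump functions $\lambda_n\colon X\to[0,1]$ with $\supp\lambda_n\subset V_n$ and $\lambda_n\equiv1$ on $\ol W_n$; in each chart $U_n\equiv B_n\times T_n$ such a $\lambda_n$ can be built as a product $\alpha(b)\beta(t)$ of a standard $C^\infty$ bump on $B_n$ and a continuous Urysohn function on $T_n$. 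Since each transversal $T_n$ is metrizable and separable, fix a countable family of continuous functions $\tau_n^k\colon T_n\to[0,1]$ that jointly separate points and generate the topology of $T_n$.

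Set $h_{n,0}=\lambda_n$, $h_{n,j}=\lambda_n\cdot x_n^j$ for $1\le j\le n$, and $h_{n,j,k}=\lambda_n\cdot(\tau_n^k\circ y_n)$, all extended by zero outside $U_n$; this yields a countable family $(h_\ell)_\ell$ of foliated $C^\infty$ functions on $X$ that are bounded, since each is supported in $\ol V_n$ with $\varphi_n(\ol V_n)$ bounded in $\R^n\times T_n$. Fix an orthonormal basis $(e_\ell)$ of $\E$ and positive weights $c_\ell>0$ small enough that $\sum_\ell c_\ell^2\|h_\ell\|_\infty^2<\infty$, and define $F\colon X\to\E$ by $F(p)=\sum_\ell c_\ell\,h_\ell(p)\,e_\ell$. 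By local finiteness of the atlas, only finitely many terms are nonzero in a neighborhood of any point of $X$, so $F$ is a foliated $C^\infty$ map. At any $p\in X$, pick $n$ with $p\in W_n$; in a neighborhood of $p$ one has $\lambda_n\equiv1$, so the components $c_\ell\,h_{n,j}$ reduce to $c_\ell\,x_n^j$, showing that the leafwise differential of $F$ at $p$ is injective and hence that $F$ is a leafwise $C^\infty$ immersion.

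For injectivity and the topological embedding property, suppose $F(p)=F(q)$. Choose $n$ with $p\in W_n$; then $\lambda_n(p)=1$, so $\lambda_n(q)=1$ and hence $q\in V_n\subset U_n$. Agreement of all $h_{n,j}$ gives $x_n^j(p)=x_n^j(q)$, and agreement of all $h_{n,j,k}$ gives $\tau_n^k(y_n(p))=\tau_n^k(y_n(q))$, so the separating property of the $\tau_n^k$ forces $y_n(p)=y_n(q)$ and therefore $p=q$. The same ingredients—$\lambda_n$ detecting membership in $V_n$, the $x_n^j$ and $\tau_n^k$ generating the topology of $B_n\times T_n$—show that each restriction $F|_{\ol W_n}$ is a topological embedding, and local finiteness of $\{W_n\}$ upgrades this to global continuity of $F^{-1}\colon F(X)\to X$. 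The principal difficulty is precisely this upgrade from injective leafwise immersion to topological embedding: it is what forces the use of the transverse-separating family $\tau_n^k$ alongside the smooth leaf coordinates, and what motivates the nested refinements $W_n\subset V_n\subset U_n$ so that $\lambda_n$ can simultaneously serve as a topological indicator function and as a smooth cutoff.
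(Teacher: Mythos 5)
Your overall strategy—choose a countable locally finite atlas with nested refinements $W_n\subset\overline{W_n}\subset V_n\subset\overline{V_n}\subset U_n$, build foliated $C^\infty$ bump functions, multiply against leafwise chart coordinates and a transverse-separating family, and assemble everything into a weighted map to $\E$—is essentially the paper's strategy; your separating family $\tau_n^k$ plays the same role as the paper's embeddings $\psi_i\colon Z_i\to\E_i$ of the transversals.

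The step that does not hold up is the existence of the bump functions $\lambda_n$. The refinements $W_n\subset V_n\subset U_n$ that the shrinking lemma supplies are arbitrary open subsets of $U_n$, not products $B'\times T'$ in the chart $U_n\equiv B_n\times T_n$. Yet you assert that $\lambda_n$ may be taken of the form $\alpha(b)\beta(t)$ with $\supp\lambda_n\subset V_n$ and $\lambda_n\equiv1$ on $\overline{W_n}$. A single product $\alpha\beta$ has product support and product ``$=1$''-set, so this works only if one can slip a product rectangle between $\overline{W_n}$ and $V_n$; for general $W_n,V_n$ (for instance sets whose ``width'' in the $B_n$-direction varies with the transverse coordinate) no such rectangle exists, and the construction fails. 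This is precisely what the paper's Claim~\ref{cl: h} is for: working on $B_2\times Z$ with $\overline V\subset W\subset\overline W\subset B_1\times Z$, it covers $Z$ by small sets $P_z$ over which $V$ and $W$ \emph{are} pinched between products $G_z\times P_z$ and $H_z\times P_z$, takes a partition of unity $\{\lambda_i\}$ of $Z$ subordinate to $\{P_z\}$ and bumps $h_i\in C^\infty(B_2)$, and sets $h=\sum_i h_i\lambda_i$. The resulting bump is a locally finite sum of products, not a single product. Substituting this construction for your $\alpha(b)\beta(t)$ closes the gap, and the rest of your argument (injectivity, the leafwise immersion property, and passing from local to global topological embedding via the locally finite $\lambda_n$'s) then matches the paper.
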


\begin{proof}
	This is an adaptation of the usual argument to show the existence of $C^\infty$ embeddings of $C^\infty$ manifolds in Euclidean spaces \cite[Theorem~1.3.4]{Hirsch1976}. Let $n=\dim X$ (as foliated space), and let $B_r=B_{\R^n}(0,r)$ and $\ol B_r=\ol B_{\R^n}(0,r)$ for each $r>0$. 
	
	\begin{claim}\label{cl: h}
		Let $Z$ be a Polish space, and consider the $C^\infty$ foliated structure on $U:=B_2\times Z$ with leaves $B_2\times\{*\}$. Let $V$ and $W$ be open subsets of $U$ such that $\ol V\subset W$ and $\ol W\subset B_1\times Z$. Then there is some $h\in C^\infty(U)$ such that $h=1$ on $V$ and $\supp h\subset W$.
	\end{claim}
	
	Since $\ol B_1$ is compact, it easily follows that each $z\in Z$ has an open neighborhood $P_z$ in $Z$ such that, for some open subsets $G_z,H_z\subset B_2$ with $\ol{G_z}\subset H_z$ and $\ol{H_z}\subset B_1$, we have $\ol V\cap(B_1\times P_z)\subset G_z\times P_z$ and $\ol{H_z}\times P_z\subset W$. Let $\{\lambda_i\}$ be a partition of unity of $Z$ subordinated to the open cover $\{\,P_z\mid z\in Z\,\}$; in particular, for every $i$, there is some $z_i\in Z$ so that $\supp\lambda_i\subset P_{z_i}$. Let $h_i\in C^\infty(B_2)$ such that $h_i=1$ on $G_{z_i}$ and $\supp h_i\subset H_{z_i}$. Then $h_i\lambda_i\in C^\infty(U)$, $h_i\lambda_i=\lambda_i$ on $G_{z_i}\times P_{z_i}$ and $\supp(h_i\lambda_i)\subset H_{z_i}\times P_{z_i}$. It follows that $h=\sum_ih_i\lambda_i$ satisfies the properties stated in Claim~\ref{cl: h}.
	
	Now, let $\UU$ be a countable collection of $C^\infty$ foliated charts $\phi_i:U_{2,i}\to B_2\times Z_i$ of $X$ such that the open sets $U_{1,i}:=\phi_i^{-1}(B_1\times Z_i)$ cover $X$. Using the paracompactness and regularity of $X$, a standard argument gives locally finite open covers, $\VV=\{V_i\}$ and $\WW=\{W_i\}$, with the same index set as $\UU$, such that $\ol{V_i}\subset W_i$ and $\ol{W_i}\subset U_{1,i}$. For each $i$, let $\E_i$ be a copy of $\E$. Take embeddings $\psi_i:Z_i\to\E_i$ \cite[Corollary~IX.9.2]{Dugundji1978}. Thus each composite
		\[
			\begin{CD}
				U_{2,i} @>{\phi_i}>> B_2\times Z_i @>{\id\times\psi_i}>> B_2\times\E_i\hookrightarrow\R^n\times\E_i=:\widetilde\E_i
			\end{CD}
		\]
	is a $C^\infty$ embedding with respect to the restriction of $\FF$, which will be denoted by $\tilde\phi_i$. By Claim~\ref{cl: h}, there are functions $h_i\in C^\infty(U_{2,i})$ such that $h_i=1$ on $V_i$ and $\supp h_i\subset W_i$. Then a $C^\infty$ embedding\footnote{The notation $\widehat\bigoplus_i\F_i$ is used for the Hilbert space direct sum of a family of Hilbert spaces $\F_i$; i.e., the Hilbert space completion of $\bigoplus_i\F_i$ with the scalar product $\langle(v_i),(w_i)\rangle=\sum_i\langle v_i,w_i\rangle$.} $f:X\to\widehat\bigoplus_i\widetilde\E_i\cong\E$ is defined by $f(x)=\sum_ah_a(x)\tilde\phi_{i_{k_a}}$.

\end{proof}

\begin{proof}[Proof of Theorem~\ref{t: universal}]
	The Polish Riemannian foliated space $\widehat\MM_{*,\text{\rm imm}}^\infty(n)$ has complete leaves and is holonomy-continuous (Example~\ref{ex: covering-cont}-\eqref{i: widehat MM_*,imm^infty(n) is holonomy-cont}). Thus any Polish Riemannian foliated subspace of $\widehat\MM_{*,\text{\rm imm}}^\infty(n)$ is also covering-continuous (Remark~\ref{r: covering-continuous}-\eqref{i: hereditary}).
	
	Let $X$ be any covering continuous Polish Riemannian foliated space with complete leaves. By Proposition~\ref{p: C^infty embedding X to E}, there is a $C^\infty$ embedding $f:X\to\E$. With the notation of Definition~\ref{d: covering-cont}, suppose that the covering-continuity of $X$ is satisfied with the connected pointed coverings $(\widetilde L_x,\tilde x)\to(L_x,x)$ ($x\in X$). Let $\hat\iota_{X,f}:X\to\widehat\MM_{*,\text{\rm imm}}^\infty(n)$ be defined by $\hat\iota_{X,f}(x)=[\widetilde L_x,\tilde f_x,\tilde x]$, where $\tilde f_x$ is the lift of $f|_{L_x}$ to $\widetilde L_x$. This map is well defined because the leaves of $X$ are complete. Moreover it is obviously foliated and continuous by the definitions of covering-continuity and the topology of $\widehat\MM_{*,\text{\rm imm}}^\infty(n)$. 
	
	To show that $\hat\iota_{X,f}$ is $C^\infty$, take a foliated chart $\Phi=(\chi,\Theta):\NN_2\to B\times\ZZ$ of $\widehat\FF_{*,\text{\rm imm}}^\infty(n)$ defined by any choice of $(V,e,\rho,\kappa,\sigma)$ as above. Let $U$ be the domain of a foliated chart of $X$ such that $\hat\iota_{X,f}(U)\subset\NN_2$. Then the composite
		\[
			\begin{CD}
				U @>{\hat\iota_{X,f}}>> \NN_2 @>{\chi}>> B
			\end{CD}
		\]
	is equal to $\Pi_V\circ(f-e)$, and therefore it is $C^\infty$.
	
	Finally, $\hat\iota_{X,f}$ is a $C^\infty$ embedding because the composite 
		\[
			\begin{CD}
				X @>{\hat\iota_{X,f}}>> \widehat\MM_{*,\text{\rm imm}}^\infty(n) @>{\ev}>> \E
			\end{CD}
		\]
	equals the $C^\infty$ embedding $f$.
\end{proof}

\section{Realization of manifolds of bounded geometry as leaves}
\label{s: bounded geometry}

\begin{prop}\label{p: bounded geometry}
	Let $M$ be any connected, complete Riemannian $n$-manifold of bounded geometry. Then there is a $C^\infty$ embedding $f:M\to\E$ such that $\widehat{\Cl}_\infty(\im\hat\iota_{M,f})$ is a compact subspace of $\widehat\MM_{*,\text{\rm imm}}^\infty(n)$.
\end{prop}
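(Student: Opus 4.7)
The plan has two parts: constructing a $C^\infty$ embedding $f\colon M\to\E$ with carefully chosen uniform properties, and then verifying compactness of its $C^\infty$-closure. I aim to produce $f$ satisfying:
(a) $\ol{f(M)}$ is compact in $\E$;
(b) $f\in C^\infty_b(M,\E)$, i.e.\ every covariant derivative $\nabla^k f$ is uniformly bounded on $M$;
(c) there is $c>0$ with $|df_x(v)|_\E\ge c\,|v|_{g_M}$ for every $v\in TM$.

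To build $f$, I would exploit bounded geometry to pick a uniformly discrete, uniformly dense net $\{x_i\}\subset M$ with spacing below the injectivity radius, a uniformly locally finite cover by normal-coordinate balls $B(x_i,r)$ with $C^\infty_b$ transition data, and a subordinate partition of unity $\{\rho_i\}$ whose derivatives of every order are uniformly bounded. The intersection graph of the cover has uniformly bounded degree, so I can color the index set with finitely many colors $\{1,\dots,K\}$ such that same-colored balls are disjoint. Writing $S_k$ for the set of indices of color $k$ and $\phi_i$ for the normal chart at $x_i$, set $c_k(x)=\sum_{i\in S_k}\rho_i(x)^2\phi_i(x)$. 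These $c_k$ assemble a bounded, uniformly $C^\infty_b$ immersion $f_{\mathrm{coord}}=(c_1,\dots,c_K)\colon M\to\R^{Kn}$ with a uniform lower bound on $|df_{\mathrm{coord}}|$ (at each point the bound comes from whichever ball $B(x_j,r/2)$ contains it), providing~(c). A separate $\ell^2$-valued ``label block'' $f_{\mathrm{label}}=(a_i\rho_i)_{i\in\N}$ with weights $a_i\to 0$ has precompact image by uniform local finiteness and distinguishes distant balls through its support patterns. Then $f=(f_{\mathrm{coord}},f_{\mathrm{label}})\colon M\to\R^{Kn}\oplus\ell^2\subset\E$ is a $C^\infty$ embedding satisfying~(a)--(c); a generic perturbation also arranges $\Iso(M,f)=\{\id_M\}$, so that $\hat\iota_{M,f}$ is injective.

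Given (a)--(c), I would show sequential compactness of $\widehat{\Cl}_\infty(\im\hat\iota_{M,f})$ in the Polish space $\widehat\MM_{*,\mathrm{imm}}^\infty(n)$ (Proposition~\ref{p: widehat MM_*,imm(n) is Polish}). For any sequence $(x_i)\subset M$, bounded geometry together with the Cheeger--Gromov-type compactness \cite[Theorem~12.3]{AlvarezBarralCandel-universal} yields, after passing to a subsequence, a pointed complete connected Riemannian $n$-manifold $(N,y)$, a compact exhaustion $\Omega_k\subset N$, and pointed $C^\infty$ embeddings $\phi_i^k\colon(\Omega_k,y)\to(M,x_i)$ (for $i$ large depending on $k$) with $(\phi_i^k)^*g_M\to g_N|_{\Omega_k}$ in $C^\infty$. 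By~(b) the pullbacks $(\phi_i^k)^*f$ are uniformly bounded in each $C^m(\Omega_k,\E)$, and by~(a) their pointwise values lie in the fixed compact set $\ol{f(M)}\subset\E$. Arzel\`a--Ascoli for $\E$-valued continuous maps on $\Omega_k$ (which applies precisely because its pointwise-precompactness hypothesis is available thanks to~(a)), combined with the Landau--Kolmogorov-type interpolation $\|d^j u\|_\infty\le C\,\|u\|_\infty^{1-j/(m+1)}\|d^{m+1}u\|_\infty^{j/(m+1)}$ that upgrades $C^0$-convergence to $C^m$-convergence under uniform $C^{m+1}$-bounds, extracts a $C^m$-convergent subsequence for each $m$; a Cantor diagonal across $k$ and $m$ produces one subsequence along which $(\phi_i^k)^*f\to f_N|_{\Omega_k}$ in $C^\infty_{\mathrm{loc}}$ for some $f_N\in C^\infty(N,\E)$. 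Property~(c) passes to the limit, so $f_N$ is a $C^\infty$ immersion and $[N,f_N,y]\in\widehat\MM_{*,\mathrm{imm}}^\infty(n)$; Definition~\ref{d: C^infty convergence in widehat MM_*(n)} then gives $[M,f,x_i]\to[N,f_N,y]$.

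The main technical obstacle is the construction, specifically the tension between~(a) and~(c): a uniform lower bound on $|df|$ naturally forces the weights responsible for the immersion to be bounded below, while precompactness of $f(M)$ in infinite dimensions demands weights with uniform tail decay. The trick resolving this is to confine the immersion-providing coordinate block to a finite-dimensional subspace of $\E$ (where boundedness equals precompactness), via the finite coloring of the cover furnished by bounded geometry, and to relegate the infinite-dimensional, weight-decaying label block to the roles of ensuring~(a) and global injectivity. Once $f$ is in hand, the second half of the argument is a routine Cheeger--Gromov plus Arzel\`a--Ascoli computation lifted from the metric to the $\E$-valued setting.
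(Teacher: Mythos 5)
Your proposal is correct and takes essentially the same route as the paper: both exploit bounded geometry to produce a uniformly discrete net, finitely colored normal-coordinate balls with a $C^\infty_b$ partition of unity, build $f$ by combining a uniformly immersive chart block with a ball-indexed decaying label (the paper packs the label $1/i$ into the same finite-dimensional target $\R^{(c+1)(n+2)}$, whereas you factor it out into an $\ell^2$ block, which is why you then need to invoke pointwise precompactness for the $\E$-valued Arzel\`a--Ascoli step), and then obtain compactness of the closure via Cheeger--Gromov compactness plus a $C^\infty_{\mathrm{loc}}$ diagonal extraction of the pulled-back functions. One small remark: your appeal to a ``generic perturbation'' to arrange $\Iso(M,f)=\{\id_M\}$ is superfluous, since $f\circ\phi=f$ together with injectivity of $f$ already forces $\phi=\id_M$.
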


\begin{proof}
	Let $B_r=B_{\R^n}(0,r)$ for each $r>0$. By the bounded geometry of $M$, there is some $r>0$, smaller than the injectivity radius of $M$, such that the following properties hold:
		\begin{enumerate}[(i)]
		
			\item\label{i: g_ij} For the normal parametrizations $\kappa_x:B_r\to B_M(x,r)$ ($x\in M$), the corresponding metric coefficients, $g_{ij}$ and $g^{ij}$, as a family of $C^\infty$ functions on $B_r$ parametrized by $x$, $i$ and $j$, lie in a bounded subset of the Fr\'echet space $C^\infty_b(B_r)$ \cite[Theorem~A.1]{Schick1996}, \cite[Theorem 2.5]{Schick2001} (see also \cite[Proposition~2.4]{Roe1988I}, \cite{Eichhorn1991}). 
			
			\item\label{i: B_M(x_i,r)} There is some countable subset $\{\,x_i\mid i\in\N\,\}\subset M$ and some $c\in\N$ such that the family of balls $B_M(x_i,r/2)$ covers $M$, and $B_M(x,r)$ meets at most $c$ sets $B_M(x_i,r)$ for all $x\in M$ \cite[A1.2 and~A1.3]{Shubin1992}, \cite[Proposition~3.2]{Schick2001}. 
			
		\end{enumerate}
	Let $\kappa_i=\kappa_{x_i}$ for each $i$. 

	\begin{claim}\label{cl: coloring of the covering}
		There is a partition of $\N$ into finitely many sets, $I_1,\ldots, I_{c+1}$, such that $B_M(x_i,r)\cap B_M(x_j,r)=\emptyset$ for $i\in I_k$ and $j\in I_l$ with $k\ne l$.
	\end{claim}

	This claim follows by considering the graph $G$ whose set of vertices is $\N$, and such that there is a unique edge connecting two different vertices, $i$ and $j$, if and only if $B_M(x_i,r)\cap B_M(x_j,r)\ne\emptyset$. Since there are at most $c$ edges meeting at each vertex according to~\eqref{i: B_M(x_i,r)}, $G$ is $c+1$-colorable\footnote{This easily follows by induction, assigning to each $i$ a color different from the colors of the previous vertices that are neighbors of $i$, which is possible because there are at most $c$ of them (see \cite{Brooks1941}).}; i.e., there is a partition of $\N$ into subsets, $I_1,\dots,I_{c+1}$, such that there is no edge joining any pair of different vertices in any $I_k$.

	Let $S$ be an isometric copy in $\R^{n+1}$ of the standard $n$-dimensional sphere containing the origin $0$. Choose some spherically symmetric $C^\infty$ function $\rho\in C^\infty(\R^n)$ such that $\rho(x)=1$ if $|x|\le r/2$ and $\rho(x)=0$ if $|x|\ge r$. Take also some $C^\infty$ map $\tau\colon\R^n\to\R^{n+1}$ that restricts to a diffeomorphism $B_r\to S\sm\{0\}$ and maps $\R^n\sm B_r$ to $0$. Let $\tilde\rho_i$ be the extension by zero of $\rho\circ \kappa_i^{-1}$ to the whole of $M$, and let $\tilde\rho^k=\sum_{i\in I^k}\tilde\rho_i$. For each $k$, define $f^k:M\to\R^{n+2}$ by
		\[
			f^k(x)=
				\begin{cases}
					0 & \text{if $x\notin \bigcup_{i\in I^k}B_M(x_i,r)$}\\
					\left(\tilde\rho^k(x)/i,\tilde\rho^k(x)\cdot\tau\circ\kappa_i^{-1}(x)\right)
					& \text{if $x\in B_M(x_i,r)$ for some $i\in I^k$}\;.
				\end{cases}
		\]
	So $f^k\circ \kappa_i=(\rho/i,\rho\cdot\tau)$, obtaining that, for every multi-index $\alpha$, the function $|\partial_\alpha(f^k\circ \kappa_i)|$ is uniformly bounded over $B_r$ by a constant depending only on $|\alpha|$. Let $f=(f^1,\dots,f^{c+1}):M\to\R^{(c+1)(n+2)}$. We have $\sup_M|\nabla^mf|<\infty$ for each $m\in\N$ by~\eqref{i: g_ij}. Moreover $f^k\circ \kappa_i=(1/i,\tau)$ on $B_{r/2}$, obtaining that $f$ is a $C^\infty$ embedding, and $\inf_M|\bigwedge^n df|>0$ by~\eqref{i: g_ij}. By taking any isometric linear embedding of $\R^{(c+1)(n+2)}$ into $\E$, we can consider $\R^{(c+1)(n+2)}$-valued functions as $\E$-valued functions; in particular, this applies to $f$.
	
	\begin{claim}\label{cl: imm}
		$\widehat{\Cl}_\infty(\im\hat\iota_{M,f})\subset\widehat\MM_{*,\text{\rm imm}}^\infty(n)$.
	\end{claim}
	
	This claim is true because, for all $[N,h,y]\in\widehat{\Cl}_\infty(\im\hat\iota_{M,f})$, it is easy to see that $\inf_N|\bigwedge^ndh|\ge\inf_M|\bigwedge^ndf|>0$, obtaining that $h$ is an immersion.
	
	\begin{claim}\label{cl: compact}
		$\widehat{\Cl}_\infty(\im\hat\iota_{M,f})$ is compact.
	\end{claim}
	
	This assertion follows by showing that any sequence in $\im\hat\iota_{M,f}$ has a subsequence that is convergent in $\widehat\MM_*^\infty(n)$. Assume first that the sequence is of the form $[M,f,x_{i_p}]$ for some sequence of indices $i_p$. Since $\Cl_\infty(\im\iota_M)$ is compact in $\MM_*^\infty(n)$ by \cite[Theorem~12.3]{AlvarezBarralCandel2016}, we can suppose that $[M,x_{i_p}]$ converges to some point $[N,y]$ in $\MM_*^\infty(n)$. Take a sequence of compact domains $\Omega_q$ in $N$ such that $B_N(y,q+1)\subset\Omega_q$. For each $q$, there are pointed local embeddings $\phi_{q,p}:(N,y)\rightarrowtail(M,x_{i_p})$, for $p$ large enough, such that $\Omega_q\subset\dom\phi_{q,p}$ and $\phi_{q,p}^*g_M\to g_N$ on $\Omega_q$ with respect to the $C^\infty$ topology. Let $h_{q,p}=\phi_{q,p}^*f$ on $\Omega_q$. It is easy to see that, for all naturals $q$ and $m$, the sequence $\|h_{q,p}\|_{C^m,\Omega_q,g_N}$ is uniformly bounded. Hence the functions $h_{q,p}$ form a compact subset of $C^\infty(\Omega_q,\R^{(c+1)(n+2)})$ with the $C^\infty$ topology by \cite[Proposition~3.11]{AlvarezBarralCandel2016}. So some subsequence $h_{q,p(q,\ell)}$ is convergent to some $h_q\in C^\infty(\Omega_q,\R^{(c+1)(n+2)})$ with the $C^\infty$ topology. In fact, arguing inductively on $q$, it is easy to see that we can assume that each $h_{q+1,p(q+1,\ell)}$ is a subsequence of $h_{q,p(q,\ell)}$, and therefore $h_{q+1}$ extends $h_q$. Thus the functions $h_q$ can be combined to define a function $h\in C^\infty(M,\R^{(c+1)(n+2)})$. Take sequences of integers, $\ell_q\uparrow\infty$ and $m_q\uparrow\infty$, so that
		\[
			\|h-\phi_{q,p(q,\ell_q)}^*f\|_{C^{m_q},\Omega_q,g_N}
			=\|h_q-h_{q,p(q,\ell_q)}\|_{C^{m_q},\Omega_q,g_N}\to0\;.
		\]
	Then, considering $h$ as an $\E$-valued function, we get that $[M,f,x_{i_{p(q,\ell_q)}}]\to[N,h,y]$ in $\widehat\MM_*^\infty(n)$ as $q\to\infty$.
	
	Now take an arbitrary sequence $[M,f,x'_p]$ in $\im\hat\iota_{M,f}$. By~\eqref{i: B_M(x_i,r)}, there is a sequence of naturals, $i_p$, such that $d_M(x'_p,x_{i_p})<r/2$. By the above case in the proof, after taking a subsequence if necessary, we can assume that $[M,f,x_{i_p}]$ is convergent to some point $[N,h,y]$ in $\widehat\MM_*^\infty(n)$. Thus, given sequences, $m_j\uparrow\infty$ in $\N$, and $S_j\uparrow\infty$ and $s_j\downarrow0$ in $\R^+$, there is some sequence $p_j\uparrow\infty$ in $\N$ such that there exists some $(m_j,S_j+e^{s_j}r/2,\lambda_j,\epsilon_j)$-pointed local quasi-equivalence $\phi_j:(N,h,y)\rightarrowtail(M,f,x_{i_{p_j}})$ for some $\lambda_j\in[1,e^{s_j})$ and $\epsilon_j\in(0,s_j)$. Since $y'_j:=\phi_j^{-1}(x'_{p_j})\in B_N(y,e^{s_j}r/2)$, it follows that $\phi_j:(N,h,y'_j)\rightarrowtail(M,f,x'_{p_j})$ is an $(m_j,S_j,\lambda_j,\epsilon_j)$-pointed local quasi-equivalence, showing that $[M,f,x'_{p_j}]\in\widehat U^{m_j}_{S_j,s_j}(N,h,y'_j)$. On the other hand, since the sequence $y'_j$ is bounded in $N$, we can suppose that it is convergent to some $y'\in N$ by taking a subsequence if necessary. Hence $[N,h,y'_j]\to[N,h,y']$ in $\widehat\MM_*^\infty(n)$ by the continuity of $\hat\iota_{N,h}$. Hence there are sequences, $n_j\uparrow\infty$ in $\N$, and $T_j\uparrow\infty$ and $t_j\downarrow\infty$ in $\R^+$, such that $[N,h,y'_j]\in\widehat U^{n_j}_{e^{s_j}T_j,t_j}(N,h,y')$ for $j$ large enough. So
		\[
			[M,f,x'_{p_j}]\in\widehat U^{m_j}_{S_j,s_j}\circ\widehat U^{n_j}_{e^{s_j}T_j,t_j}(N,h,y')
			\subset\widehat U^{\min\{m_j,n_j\}}_{\min\{S_j,T_j\},s_j+t_j}(N,h,y')
		\]
	for $p$ large enough by Propositionn~\ref{p: C^infty uniformity in widehat MM_*(n)}-\eqref{i: circ}. This shows that $[M,f,x'_{p_j}]\to[N,h,y']$ in $\widehat\MM_*^\infty(n)$, completing the proof of Claim~\ref{cl: compact}.
\end{proof}	

\begin{proof}[Proof of Theorem~\ref{t: bounded geometry => leaf of a compact fol sp}]
	Given a  connected, complete Riemannian $n$-manifold $M$ of bounded geometry, by Proposition~\ref{p: bounded geometry}, and Theorems~\ref{t: C^infty convergence in widehat MM_*(n)} and~\ref{t: widehat FF_*,imm(n)}, $\widehat{\Cl}_\infty(\im\hat\iota_{M,f})$ is a compact Riemannian foliated subspace of $\widehat\MM_{*,\text{\rm imm}}^\infty(n)$. Moreover $\hat\iota_{M,f}:M\to\im\hat\iota_{M,f}$ is an isometry because $f$ is an embedding.
\end{proof}

\section{Open problems}

\begin{quest}\label{quest: trivial hol}
	In Theorem~\ref{t: bounded geometry => leaf of a compact fol sp}, is it possible to get the Riemannian foliated space so that its leaves have trivial holonomy?
\end{quest}

Question~\ref{quest: trivial hol} can be reduced to the following question, in the same way as Theorem~\ref{t: bounded geometry => leaf of a compact fol sp} follows from Proposition~\ref{p: bounded geometry}.

\begin{quest}\label{quest: aperiodic}
	In Proposition~\ref{p: bounded geometry}, is it possible to get $f$ such that moreover\footnote{According to the terminology for tilings, it could be said that $(M,f)$ is \emph{aperiodic} when this condition is satisfied. The same term could be also used for the corresponding property for graphs, in Question~\ref{quest: graph}.} $\Iso(N,h)=\{\id_M\}$ if $\im\hat\iota_{N,h}\subset\widehat{\Cl}_\infty(\im\hat\iota_{M,f})$?
\end{quest}
	
	In turn, Question~\ref{quest: aperiodic} can be reduced to the following graph version. Consider only connected graphs with a countable set of vertices, all of them with finite degree. These graphs are proper path metric spaces in a canonical way so that each edge is of length one. Thus they define a subspace $\GG_*$ of the Gromov space $\MM_*$ of pointed proper metric spaces. Decorate such graphs with maps of their vertex set to $\N$. This gives rise to a space $\widehat\GG_*$ of isomorphism classes of pointed decorated graphs, like in the case of $\widehat\MM_*^\infty(n)$. Let $\widehat{\Cl}$ denote the closure operator in $\widehat\GG_*$. For each decorated graph $(G,\alpha)$, let $\Iso(G,\alpha)$ denote its group of isomorphisms. There is a canonical map $\hat\iota_{G,\alpha}:G\to\widehat\GG_*$, like the above map $\hat\iota_{M,f}$. It is said that $G$ is of \emph{bounded geometry} if there is a uniform upper bound for the degree of its vertices.

\begin{quest}\label{quest: graph}
	For any graph $G$ of bounded geometry, does there exist a finite valued decoration $\alpha$ so that $\Iso(H,\beta)=\{\id\}$ for all decorated graph $(H,\beta)$ with $\im\hat\iota_{H,\beta}\subset\widehat{\Cl}(\im\hat\iota_{G,\alpha})$?
\end{quest}

There are aperiodic tilings of $\R$ (like the Fibonacci tiling), or elements of $\{0,1\}^\Z$, giving rise to examples of decorations of the Cayley graph of $\Z$ satisfying the condition of Question~\ref{quest: graph} (see e.g.\ \cite{PytheasFogg2002}). If Question~\ref{quest: graph} had an affirmative answer, then, in the proof of Proposition~\ref{p: bounded geometry}, we could take a finite valued decoration $\alpha$ of $G$ satisfying the condition of Question~\ref{quest: graph}, and modify the definition of $f$ so that
	\[
		f^k(x)=\left(\tilde\rho^k(x)\cdot(\alpha(i)+1/i),\tilde\rho^k(x)\cdot\tau\circ\kappa_i^{-1}(x)\right)
	\]
if $x\in B_M(x_i,r)$ for some $i\in I^k$. This would give affirmative answers to Questions~\ref{quest: aperiodic} and~\ref{quest: trivial hol}.

\bibliographystyle{amsplain}

%\bibliography{../Alvarez-Barral}

%\end{document}

\providecommand{\bysame}{\leavevmode\hbox to3em{\hrulefill}\thinspace}
\providecommand{\MR}{\relax\ifhmode\unskip\space\fi MR }
% \MRhref is called by the amsart/book/proc definition of \MR.
\providecommand{\MRhref}[2]{%
  \href{http://www.ams.org/mathscinet-getitem?mr=#1}{#2}
}
\providecommand{\href}[2]{#2}

\end{document}